\newtheorem{theorem}{Theorem}[section]
\newtheorem{proposition}[theorem]{Proposition}
\newtheorem{corollary}[theorem]{Corollary}
\newtheorem{lemma}[theorem]{Lemma}
\theoremstyle{definition}
\newtheorem{definition}[theorem]{Definition}
\newtheorem{example}[theorem]{Example}
\newtheorem{question}[theorem]{Question}
\numberwithin{equation}{section}
\DeclareMathOperator{\cl}{\textrm{Cl}}
\DeclareMathOperator{\inte}{\textrm{Int}}
\DeclareMathOperator{\card}{\textrm{Card}}
\DeclareMathOperator{\dom}{\textrm{dom}}
\begin{document}

\title[Algebraic structures in the family of non-Lebesgue measurable sets]{Algebraic structures in the family of non-Lebesgue measurable sets}
\author{Venuste NYAGAHAKWA}
\address{Department of Mathematics\\
University of RWANDA\\
PO. Box 3900, Kigali, Rwanda}
\email{venustino2005@yahoo.fr}

\author{Gratien HAGUMA}
\address{Department of Mathematics\\
University of RWANDA\\
PO. Box 3900, Kigali, Rwanda}
\email{hagugrat@yahoo.fr}

\author{Joseline MUNYANEZA}
\address{Department of General courses\\
RP-IPRC-HUYE\\
 9PRP+727, Butare}
\email{joselinemunyaneza@gmail.com}

\begin{abstract}
In the additive topological group $(\mathbb{R},+)$ of real numbers, we construct families of sets for which elements are not measurable in the Lebesgue sense. The constructed families have algebraic structures of being semigroups (i.e., closed under finite unions of sets), and invariant under the action of the group $\Phi(\mathbb{R})$ of all translations of $\mathbb{R}$ onto itself. Those semigroups are constructed by using Vitali selectors and Bernstein subsets on $\mathbb{R}$. In particular, we prove that the family $(\mathcal{S}(\mathcal{B})\vee \mathcal{S}(\mathcal{V}))*\mathcal{N}_0:=\{((U_1\cup U_2)\setminus N)\cup M: U_1\in \mathcal{S}(\mathcal{B}), U_2\in \mathcal{S}(\mathcal{V}), N,M\in \mathcal{N}_0\} $ is a semigroup of sets, invariant under the action of $\Phi(\mathbb{R})$ and consists of sets which are not measurable in the Lebesgue sense. Here, $\mathcal{S}(\mathcal{B})$ is the collection of all finite unions of some type of Bernstein subsets of $\mathbb{R}$, $\mathcal{S}(\mathcal{V})$ is the collection of all finite unions of Vitali selectors of $\mathbb{R}$, and $\mathcal{N}_0$ is the $\sigma$-ideal of all subsets of $\mathbb{R}$ having the Lebesgue measure zero.

\end{abstract}
 \keywords{Lebesgue measurability,  Bernstein sets,  Vitali selectors, Non-Lebesgue measurable sets, Baire property, Semigroup of sets.}
\maketitle

\section{Introduction}
Let $(\mathbb{R},+)$ be the additive group of real numbers endowed with the Euclidean topology, and let $\mathcal{P}(\mathbb{R})$ be the collection of all subsets of $\mathbb{R}$. It is well-known that there exist subsets of $\mathbb{R}$ which are not measurable in the Lebesgue sense \cite{Kh1}, \cite{Ox};  for instance, Vitali selectors of $\mathbb{R}$, Bernstein sets of $\mathbb{R}$, as well as non-Lebesgue measurable subsets of $\mathbb{R}$ associated with Hamel basis.  Accordingly, the family $\mathcal{P}(\mathbb{R})$ can be decomposed into two disjoint non-empty families; namely, the family $\mathcal{L}(\mathbb{R})$ of all Lebesgue measurable subsets of $\mathbb{R}$, and the family $\mathcal{L}^c(\mathbb{R})=\mathcal{P}(\mathbb{R})\setminus \mathcal{L}(\mathbb{R})$ of all non-Lebesgue measurable subsets of $\mathbb{R}$. 

The algebraic structure; from the set-theoretic point of view, of the family $\mathcal{L}(\mathbb{R})$ is well-known. Indeed, the family $\mathcal{L}(\mathbb{R})$ is a $\sigma$-algebra of sets on $\mathbb{R}$, and hence it is closed under all basic set-operations. It contains the collection $\mathcal{B}_O(\mathbb{R})$ of all Borel subsets of $\mathbb{R}$, as well as, the collection $\mathcal{N}_0$ of all subsets of $\mathbb{R}$ having the Lebesgue measure zero. In addition, the family $\mathcal{L}(\mathbb{R})$ is invariant under the action of the group $\Phi (\mathbb{R})$ of all translations of $\mathbb{R}$ onto itself; it means that if $A\subseteq \mathbb{R}$ is such that $A\in \mathcal{L}(\mathbb{R})$ and $h\in \Phi(\mathbb{R})$ then $h(A)\in \mathcal{L}(\mathbb{R})$. 

On the other hand, the family $\mathcal{L}^c(\mathbb{R})$ does not have a well-defined structure from the set-theoretic point of view. In fact, the union (resp. intersection, difference, and symmetric difference) of two elements in the family $\mathcal{L}^c(\mathbb{R})$ can be inside or outside of $\mathcal{L}^c(\mathbb{R})$. However, like $\mathcal{L}(\mathbb{R})$, the family $\mathcal{L}^c(\mathbb{R})$ is invariant under the action of the group $\Phi (\mathbb{R})$.

Given a countable dense subgroup $Q$ of $(\mathbb{R},+)$, let $\mathcal{V}(Q)$ be the collection of all Vitali selectors related to $Q$. The construction of Vitali selectors is discussed in Subsection \ref{Eliyezeri} and more facts about them can be found in \cite{Kh1}. The following question constitutes the motivating key of this paper.

\begin{question}[\cite{Ch}]\label{Lebesgue11}
\emph{Could we find in $\mathcal{L}^c(\mathbb{R})$ subfamilies of $\mathcal{P}(\mathbb{R})$ containing $\mathcal{V}(Q)$ and have some algebraic structures from the set-theoretic point of view?}
\end{question}

In \cite{CN2}, it was shown that each element of the family $\mathcal{V}_1(Q)=\{\bigcup_{i=1}^n V_i: V_i\in \mathcal{V}(Q), n\in \mathbb{N}\}$ of all finite unions of Vitali selectors related to $Q$, is a semigroup of sets with respect to the operation of union of sets, and that, it is invariant under the action of the group $\Phi(\mathbb{R})$ such that $\mathcal{V}(Q)\subsetneq \mathcal{V}_1(Q)\subsetneq \mathcal{L}^c(\mathbb{R})$. In addition, the family $\mathcal{V}_2(Q)=\mathcal{V}_1(Q)*\mathcal{N}_0:=\{(U\setminus M)\cup N: U\in \mathcal{V}_1(Q), M, N\in \mathcal{N}_0\}$ was shown to be a semigroup of sets, invariant under the action of the group $\Phi(\mathbb{R})$ and that  $\mathcal{V}_1(Q)\subsetneq \mathcal{V}_2(Q)\subsetneq \mathcal{L}^c(\mathbb{R})$.  Using the family $\mathcal{V}_1(Q)$ and different ideals of subsets of $\mathbb{R}$, different semigroups of sets for which elements are not measurable in the Lebesgue sense, were constructed in \cite{CN4} and \cite{CN1}. 

Let $\mathcal{V}$ be the family of all Vitali selectors of $\mathbb{R}$, and let  $\mathcal{S}(\mathcal{V})$ be the collection of all finite unions of elements of $\mathcal{V}$; that is,  $\mathcal{S}(\mathcal{V})=\{ \bigcup_{i=1}^n V_i:  V_i\in \mathcal{V}, n\in \mathbb{N}\}$. In \cite{VH}, it is shown that  the family
 $\mathcal{S}(\mathcal{V})*\mathcal{N}_0:=\{(U\setminus M)\cup N: U\in \mathcal{S}(\mathcal{V}), M, N\in \mathcal{N}_0\}$ is an abelian semigroup of sets, for which elements are not measurable in the Lebesgue sense, and that $\mathcal{V}_2(Q)\subsetneq \mathcal{S}(\mathcal{V})*\mathcal{N}_0$ for every countable dense subgroup $Q$ of $(\mathbb{R},+)$.  We note that the non-Lebesgue measurability of elements of the family $\mathcal{S}(\mathcal{V})*\mathcal{N}_0$ was also proved in \cite{CT}  together with other interesting facts about the semigroups generated by Vitali selectors of $\mathbb{R}$. Besides that, it is proved in \cite{VH} that the abelian semigroup $\mathcal{S}(\mathcal{V})*\mathcal{N}_0$ has an algebraic structure of being invariant under the action of the group $\Pi(\mathbb{R})$ of all affine transformations of $\mathbb{R}$ onto itself. 
 
The semigroups of sets that exist in the literature (for which elements are not measurable in the Lebesgue sense), are mostly constructed by using  Vitali selectors of $\mathbb{R}$. In this paper, we consider a more general setting, by looking away for extending Question \ref{Lebesgue11}. Accordingly,  we consider a Bernstein set $B$ which has an algebraic structure of being a subgroup of $(\mathbb{R},+)$. Such a set exists as it is shown in \cite{Kh1} and \cite{MC}. Furthermore,  we consider the collection $\mathbb{R}/B$ of all  cosets (translates) of $B$ in $(\mathbb{R},+)$, that we denote by $\mathcal{B}$ for simplicity. Since the family of Bernstein sets is preserved by homeomorphisms of $\mathbb{R}$ onto itself \cite{Kh1}, it follows that each element of $\mathcal{B}$ is also a  Bernstein set.  

\begin{question}\label{Lebesgue12}
\emph{Could we find in $\mathcal{L}^c(\mathbb{R})$ subfamilies of $\mathcal{P}(\mathbb{R})$ containing $\mathcal{B}$ and have some algebraic structures from the set-theoretic point of view?}
\end{question}

Through the paper, different families of sets answering Question \ref{Lebesgue12} are constructed.  In particular, the family $\mathcal{S}(\mathcal{B})=\{\bigcup_{i=1}^n B_i: B_i\in \mathcal{B}, n\in \mathbb{N}\}$ of all finite unions of elements of $\mathcal{B}$, and its extension $\mathcal{S}(\mathcal{B})*\mathcal{N}_0$ by the $\sigma$-ideal $\mathcal{N}_0$, constitute an  answer to Question \ref{Lebesgue12}. 

For an extension, one can consider  the family  $\mathcal{V}\vee \mathcal{B}:=\{V\cup B: V\in \mathcal{V} \text{ and } B\in \mathcal{B}\}$ made by all possible pairwise unions of a Vitali selector and a Bernstein set. The main aim of this paper, is to  construct families of sets that constitute answers to the following question; which generalizes in some sense Question \ref{Lebesgue11} and Question \ref{Lebesgue12}.

\begin{question}\label{Lebesgue13}
\emph{Could we find in $\mathcal{L}^{c}(\mathbb{R})$ subfamilies of $\mathcal{P}(\mathbb{R})$ containing  the family $\mathcal{V}\vee \mathcal{B}$ and have some algebraic structures from the set-theoretic point of view?}
\end{question}

In this paper, different families of sets having the algebraic structure of being semigroups with respect to the operation of the union of sets, are constructed through the use of $\mathcal{V}$ and $\mathcal{B}$. The constructed semigroups extend the existing ones, and in particular, some contain the family $\mathcal{V}\vee \mathcal{B}$. The constructed families also consist of sets that are not measurable in the Lebesgue sense, and they are invariant under the action of the group $\Phi(\mathbb{R})$. It is proved that, for any Bernstein sets $B_1$ and $B_2$ having the algebraic structures of being subgroups of $(\mathbb{R},+)$, the family $[\mathcal{S}(\mathcal{B}_1)\vee \mathcal{S}(\mathcal{B}_2)\vee \mathcal{S}(\mathcal{V})]*\mathcal{N}_0:=\{[(U_1\cup U_2\cup U_3)\setminus N]\cup M: U_1\in \mathcal{S}(\mathcal{B}_1), U_2\in \mathcal{S}(\mathcal{B}_2), U_3\in \mathcal{S}(\mathcal{V}), N,M\in \mathcal{N}_0\}$ is an abelian semigroup of sets, for which elements are not measurable in the Lebesgue sense, and that it is invariant under the action of the group $\Phi(\mathbb{R})$. We further show that the equality $[\mathcal{S}(\mathcal{B}_1)\vee \mathcal{S}(\mathcal{B}_2)\vee \mathcal{S}(\mathcal{V})]*\mathcal{N}_0=(\mathcal{S}(\mathcal{B}_1)*\mathcal{N}_0)\vee (\mathcal{S}(\mathcal{B}_2)*\mathcal{N}_0)\vee (\mathcal{S}(\mathcal{V})*\mathcal{N}_0)$ always holds.

This paper uses the standard notation and facts from Set Theory and Real Analysis, and it is organized as follows: After an introductory section, where motivating ideas and the problem under investigation are developed, the second section deals with the theory of semigroups, the theory of Vitali selectors, and different facts about Bernstein sets. The third section is about the semigroups of non-Lebesgue measurable sets constructed by using Bernstein sets. The fourth section complements the part about Vitali selectors, developed in the preliminary section, and it is about semigroups of non-Lebesgue measurable sets generated by Vitali selectors. The fifth section, is devoted to the semigroups of non-Lebesgue measurable sets, that are constructed by using Vitali selectors and Bernstein sets, simultaneously.

\section{Preliminary facts}
\subsection{Theory of semigroups and ideals of sets}
Let $\mathcal{S}$ be a non-empty set.  The set $\mathcal{S}$  is called a \emph{semigroup} of sets, if there is a binary operation  $\ast: \mathcal{S}\times \mathcal{S}\longrightarrow \mathcal{S}$, for which the associativity law is satisfied; i.e., $(x\ast y)\ast z =x\ast (y\ast z)$ for all $x,y,z\in \mathcal{S}$. The semigroup $\mathcal{S}$ is said to be \emph{abelian}, if $x\ast y=y\ast x$ for all $x,y\in \mathcal{S}$.

For a non-empty set $X$, let $\mathcal{P}(X)$ be the collection of all subsets of $X$. Consider a non-empty family of sets $\mathcal{S}\subseteq \mathcal{P}(X)$, such that for each pair of elements $A,B\in \mathcal{S}$, we have $A\cup B\in \mathcal{S}$. Since the union of sets is both commutative and associative, such a family of sets, is an abelian semigroup, with respect to the operation of union of sets. 

\begin{definition}\label{Lebesgue21}
A non-empty collection of sets $\mathcal{S}\subseteq \mathcal{P}(X)$, is called a \emph{semigroup of sets} on $X$, if it is closed under finite unions. If  $\mathcal{S}$ is closed under countable unions,  then it is said to be a \emph{$\sigma$-semigroup} of sets on $X$.
\end{definition}

It is evident that if $\mathcal{S}$ is a semigroup of sets on $X$, with respect to the operation of union of sets, then the collection $\{X\setminus S: S\in \mathcal{S}\}=\{S^c: S\in \mathcal{S}\}$ of all complements of elements of $\mathcal{S}$ in $X$, is closed under finite intersection of sets, and thus, it is a  semigroup of sets with respect to the set-theoretic operation of intersection of sets on $X$. 

A non-empty family of sets  $\mathcal{R}$ is called a \emph{ring} of sets on $X$, if it is a semigroup of sets and closed under relative complement; that is if $A, B\in \mathcal{R}$ then $A\cup B\in \mathcal{R}$ and if $A,B\in \mathcal{R}$ then $A\setminus B\in \mathcal{R}$.

Recall \cite{Mo} that a non-empty collection $\mathcal{I}\subseteq \mathcal{P}(X)$ of sets  is called an \emph{ideal of sets} on $X$, if it satisfies the following conditions:
\begin{enumerate}[(i)]
\item If $A\in \mathcal{I}$ and $B\in \mathcal{I}$, then $A\cup B\in \mathcal{I}$.
\item If $A\in \mathcal{I}$ and $B\subseteq A$, then $B\in \mathcal{I}$.
\end{enumerate} 
If the ideal of sets $\mathcal{I}$ is closed under countable unions of sets, then it is called a \emph{$\sigma$-ideal of sets} on $X$. Clearly, each ideal of sets is a semigroup of sets which is closed under taking subsets.

\begin{example}[\cite{CN4}]\label{Lebesgue22}
If $\mathcal{A}\subseteq \mathcal{P}(X)$ is a non-empty family of sets,  consider the collection $\mathcal{S}({\mathcal{A}})=\left\{\bigcup_{i=1}^nA_i: A_i\in \mathcal{A}, n\in \mathbb{N}\right\}$ of all finite unions of elements of $\mathcal{A}$, and
consider the collection $\mathcal{I}({\mathcal{A}})=\{B\in \mathcal{P}(X): \text{there is  }A\in \mathcal{S}({\mathcal{A}})  \text{  such that  }B\subseteq A\}$. It is clear that the family $\mathcal{S}({\mathcal{A}})$ is a semigroup of sets, while the collection $\mathcal{I}({\mathcal{A}})$ is an ideal of sets on $X$. The family $\mathcal{S}({\mathcal{A}})$ is called the \emph{semigroup of sets generated by $\mathcal{A}$}, while $\mathcal{I}({\mathcal{A}})$ is called the  \emph{ideal of sets generated by $\mathcal{A}$}. Evidently the inclusions $\mathcal{S}(\mathcal{A})\subseteq \mathcal{I}({\mathcal{A}})$ and  $\mathcal{A}\subseteq \mathcal{S}(\mathcal{A})$ always hold. If $\mathcal{A}$ is a semigroup of sets on $X$, then we have the equality $\mathcal{S}(\mathcal{A})=\mathcal{A}$.
\end{example}

\begin{example}\label{Lebesgue23}
On the set $\mathbb{R}$ of real numbers, consider the following families of sets: The family $\mathcal{I}_f$ of all finite subsets of $\mathbb{R}$, the family $\mathcal{I}_c$ of all countable subsets of $\mathbb{R}$, and the family $\mathcal{B}_b(\mathbb{R})$ of all bounded subsets of $\mathbb{R}$. All these families are semigroups of sets on $\mathbb{R}$. They are ideals of sets on $\mathbb{R}$ and both are rings of sets on $\mathbb{R}$.
\end{example}

For any families  $\mathcal{A}$ and $\mathcal{B}$ of subsets of $X$, define a new family of sets on $X$ by setting $\mathcal{A}* \mathcal{B}=\{(A\setminus B_1)\cup B_2: A\in \mathcal{A}, B_1\in \mathcal{B}, B_2\in \mathcal{B}\}$. One can observe that, if $\mathcal{A}, \mathcal{B}, \mathcal{C}$ and $\mathcal{D}$ are families of sets on $X$ such that $\mathcal{A}\subseteq \mathcal{B}, \mathcal{C}\subseteq \mathcal{D}$, then $\mathcal{A}* \mathcal{C} \subseteq \mathcal{B}* \mathcal{D}$. For any family of sets $\mathcal{A}$ on $X$, the inclusion $\mathcal{A}\subseteq \mathcal{A}* \mathcal{A}$ always holds, but  the reverse inclusion does not need to hold. If $\mathcal{S}_1$ and $\mathcal{S}_2$ are semigroups of sets, then the family $\mathcal{S}_1*\mathcal{S}_2$ does not need to be a semigroup of sets, unless one of them is an ideal of sets. This fact is illustrated by the following statement, which presents a way of extending a given semigroup by using an ideal of sets.

\begin{proposition}[\cite{CN4}]\label{Lebesgue25}
Let $\mathcal{S}$ be a semigroup of sets on $X$ and let $\mathcal{I}$ be an ideal of sets on $X$. Then, the families $\mathcal{I}* \mathcal{S}$  and $\mathcal{S}* \mathcal{I}$ are semigroups of sets on $X$, such that $\mathcal{S}\subseteq \mathcal{I}* \mathcal{S}\subseteq \mathcal{S}* \mathcal{I}$. Moreover,  $\mathcal{I}* (\mathcal{I}* \mathcal{S})=\mathcal{I}* \mathcal{S}$ and $(\mathcal{S}* \mathcal{I})* \mathcal{I}=\mathcal{S}* \mathcal{I}$. 
\end{proposition}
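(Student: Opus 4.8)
The plan is to reduce the statement to two transparent descriptions of the derived families $\mathcal{I}*\mathcal{S}$ and $\mathcal{S}*\mathcal{I}$, and then to read off each clause by elementary set algebra, using only the ideal axioms for $\mathcal{I}$ (closure under finite unions and under subsets, so in particular $\emptyset\in\mathcal{I}$) and the union-closure of $\mathcal{S}$. First I would record that for \emph{every} nonempty family $\mathcal{A}$ on $X$ one has $\mathcal{I}*\mathcal{A}=\{I\cup A:I\in\mathcal{I},\ A\in\mathcal{A}\}$: the inclusion ``$\subseteq$'' holds because $I\setminus A_1\subseteq I$ lies in $\mathcal{I}$, and ``$\supseteq$'' because $I\cup A=(I\setminus A)\cup A$. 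Specializing $\mathcal{A}=\mathcal{S}$ gives three of the required facts at once: $\mathcal{S}\subseteq\mathcal{I}*\mathcal{S}$ by taking $I=\emptyset$; the family $\mathcal{I}*\mathcal{S}$ is closed under finite unions because $(I_1\cup S_1)\cup(I_2\cup S_2)=(I_1\cup I_2)\cup(S_1\cup S_2)$ with $\mathcal{I},\mathcal{S}$ union-closed; and $\mathcal{I}*\mathcal{S}\subseteq\mathcal{S}*\mathcal{I}$ because $I\cup S=(S\setminus\emptyset)\cup I$ with $\emptyset,I\in\mathcal{I}$.

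Next I would prove that $T\in\mathcal{S}*\mathcal{I}$ if and only if $T\triangle S\in\mathcal{I}$ for some $S\in\mathcal{S}$. If $T=(S\setminus I_1)\cup I_2$, then $S\setminus T\subseteq I_1$ and $T\setminus S\subseteq I_2$, so $T\triangle S\subseteq I_1\cup I_2\in\mathcal{I}$; conversely, given such an $S$, putting $I_1:=S\setminus T$ and $I_2:=T\setminus S$ — both subsets of $T\triangle S$, hence members of $\mathcal{I}$ — yields $T=(S\setminus I_1)\cup I_2$. Granting this, the closure of $\mathcal{S}*\mathcal{I}$ under finite unions, which is the one clause that is not pure bookkeeping, follows from the standard inclusion $(T_1\cup T_2)\triangle(S_1\cup S_2)\subseteq(T_1\triangle S_1)\cup(T_2\triangle S_2)$ combined with union-closure of $\mathcal{I}$ and of $\mathcal{S}$. (Alternatively, avoiding symmetric differences, one expands $[(S_1\setminus I_1)\cup I_2]\cup[(S_3\setminus I_3)\cup I_4]$ directly: with $S:=S_1\cup S_3\in\mathcal{S}$ and $I:=I_1\cup I_3\in\mathcal{I}$ one checks that $S\setminus I$ is contained in the union while the rest lies in $I_1\cup I_2\cup I_3\cup I_4\in\mathcal{I}$, displaying the union as $(S\setminus I)\cup I'$ with $I'\in\mathcal{I}$.)

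Finally, for the two idempotency identities, the inclusions $\mathcal{I}*\mathcal{S}\subseteq\mathcal{I}*(\mathcal{I}*\mathcal{S})$ and $\mathcal{S}*\mathcal{I}\subseteq(\mathcal{S}*\mathcal{I})*\mathcal{I}$ are immediate from $\mathcal{A}\subseteq\mathcal{I}*\mathcal{A}$ and $\mathcal{A}\subseteq\mathcal{A}*\mathcal{I}$ (take the ideal part equal to $\emptyset$). For the reverse inclusions I would reuse the two descriptions above: applying the identity of the first paragraph with $\mathcal{A}=\mathcal{I}*\mathcal{S}$ gives $\mathcal{I}*(\mathcal{I}*\mathcal{S})=\{I'\cup T:I'\in\mathcal{I},\ T\in\mathcal{I}*\mathcal{S}\}$, and substituting $T=I\cup S$ turns this into $(I'\cup I)\cup S\in\mathcal{I}*\mathcal{S}$; while an element $(R\setminus I_1)\cup I_2$ of $(\mathcal{S}*\mathcal{I})*\mathcal{I}$ with $R=(S\setminus J_1)\cup J_2$ rewrites as $(S\setminus(J_1\cup I_1))\cup((J_2\setminus I_1)\cup I_2)$, which has the form $(S\setminus I_1')\cup I_2'$ with $I_1',I_2'\in\mathcal{I}$ and so belongs to $\mathcal{S}*\mathcal{I}$. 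The only genuinely delicate point is the union-closure of $\mathcal{S}*\mathcal{I}$, where all three closure properties — ideal under unions, ideal under subsets, semigroup under unions — come into play simultaneously; everything else is routine manipulation of unions and set differences.
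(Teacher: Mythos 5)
Your proof is correct in all its steps. Note, however, that the paper itself does not prove Proposition~\ref{Lebesgue25}: it is quoted from the reference [CN4] without an argument, so there is no in-paper proof to measure yours against. Taken on its own terms, your argument is sound and well organized around two structural observations that do all the work: first, that $\mathcal{I}*\mathcal{A}$ collapses to $\{I\cup A: I\in\mathcal{I},\ A\in\mathcal{A}\}$ because $I\setminus A_1\in\mathcal{I}$ by downward closure, which makes the semigroup property of $\mathcal{I}*\mathcal{S}$, the inclusion $\mathcal{S}\subseteq\mathcal{I}*\mathcal{S}$ (using $\emptyset\in\mathcal{I}$, which does follow from non-emptiness plus downward closure), the inclusion $\mathcal{I}*\mathcal{S}\subseteq\mathcal{S}*\mathcal{I}$, and the identity $\mathcal{I}*(\mathcal{I}*\mathcal{S})=\mathcal{I}*\mathcal{S}$ all immediate; and second, that $\mathcal{S}*\mathcal{I}$ is exactly the family of sets differing from some member of $\mathcal{S}$ by an element of $\mathcal{I}$ in symmetric difference. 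The latter characterization, combined with $(T_1\cup T_2)\,\triangle\,(S_1\cup S_2)\subseteq(T_1\triangle S_1)\cup(T_2\triangle S_2)$, is the cleanest way to get union-closure of $\mathcal{S}*\mathcal{I}$, and your fallback direct expansion is also verified correctly. The computation for $(\mathcal{S}*\mathcal{I})*\mathcal{I}\subseteq\mathcal{S}*\mathcal{I}$, rewriting $(R\setminus I_1)\cup I_2$ with $R=(S\setminus J_1)\cup J_2$ as $\bigl(S\setminus(J_1\cup I_1)\bigr)\cup\bigl((J_2\setminus I_1)\cup I_2\bigr)$, is exact. No gaps.
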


For any families  $\mathcal{A}$ and $\mathcal{B}$ of subsets of $X$, consider the collection $\mathcal{A}\vee \mathcal{B}=\{A\cup B: A\in \mathcal{A}, B\in \mathcal{B}\}$. It is clear that if $\mathcal{A}, \mathcal{B}, \mathcal{C}$ and $\mathcal{D}$ are families  of sets on $X$, such that $\mathcal{A}\subseteq \mathcal{B}, \mathcal{C}\subseteq \mathcal{D}$, then $\mathcal{A}\vee \mathcal{C} \subseteq \mathcal{B}\vee \mathcal{D}$. For any family of sets $\mathcal{A}$ on $X$, the inclusion $\mathcal{A}\subseteq \mathcal{A}\vee \mathcal{A}$ always hold but the reverse inclusion doesn't need to hold. If $\mathcal{A}$ is a semigroup of sets on $X$, then $\mathcal{A}\vee \mathcal{A}=\mathcal{A}$.  The inclusions $\mathcal{A}\subseteq \mathcal{A}\vee \mathcal{B}$ and $\mathcal{B}\subseteq \mathcal{A}\vee \mathcal{B}$ do not need to hold for any families $\mathcal{A}$ and $\mathcal{B}$ with or without the assumption of being semigroups. If $\mathcal{S}_1$ and $\mathcal{S}_2$ are semigroups of sets, then the usual union $\mathcal{S}_1\cup \mathcal{S}_2$ does not need to be a semigroup of sets, however the following lemma holds.

\begin{lemma}\label{Lebesgue26}
If  $\mathcal{S}_1$ and $\mathcal{S}_2$ are semigroups  of sets on $X$, then the family $\mathcal{S}_1\vee \mathcal{S}_2$ is also a semigroup of sets on $X$. 
\end{lemma}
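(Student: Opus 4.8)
The plan is to verify the single defining property of a semigroup of sets (Definition \ref{Lebesgue21}): that $\mathcal{S}_1\vee \mathcal{S}_2$ is non-empty and closed under the union of two of its members. Non-emptiness is immediate, since $\mathcal{S}_1$ and $\mathcal{S}_2$ are non-empty by hypothesis, so picking $A\in \mathcal{S}_1$ and $B\in \mathcal{S}_2$ gives $A\cup B\in \mathcal{S}_1\vee \mathcal{S}_2$.

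For closure, I would take two arbitrary elements of $\mathcal{S}_1\vee \mathcal{S}_2$, say $C_1=A_1\cup B_1$ and $C_2=A_2\cup B_2$ with $A_1,A_2\in \mathcal{S}_1$ and $B_1,B_2\in \mathcal{S}_2$, and compute $C_1\cup C_2$. Using the commutativity and associativity of the set-theoretic union, one rewrites $C_1\cup C_2=(A_1\cup B_1)\cup(A_2\cup B_2)=(A_1\cup A_2)\cup(B_1\cup B_2)$. Since $\mathcal{S}_1$ is a semigroup of sets, $A_1\cup A_2\in \mathcal{S}_1$; since $\mathcal{S}_2$ is a semigroup of sets, $B_1\cup B_2\in \mathcal{S}_2$. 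Hence $C_1\cup C_2$ is of the form $A\cup B$ with $A\in \mathcal{S}_1$, $B\in \mathcal{S}_2$, i.e. $C_1\cup C_2\in \mathcal{S}_1\vee \mathcal{S}_2$.

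This establishes closure under binary unions; closure under arbitrary finite unions then follows by a one-line induction on the number of sets, which I would either state explicitly or simply note is immediate. There is no real obstacle here: the only ingredient beyond the hypotheses is the fact that union is commutative and associative, which lets us regroup the $\mathcal{S}_1$-parts together and the $\mathcal{S}_2$-parts together. (As a passing remark, the same regrouping shows $\mathcal{S}_1\vee\mathcal{S}_2$ is in fact an abelian semigroup with respect to union, consistent with the discussion preceding Definition \ref{Lebesgue21}.)
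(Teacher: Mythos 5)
Your proof is correct and is exactly the argument the paper has in mind (the paper in fact omits the proof as evident): regrouping $(A_1\cup B_1)\cup(A_2\cup B_2)$ as $(A_1\cup A_2)\cup(B_1\cup B_2)$ and invoking closure of $\mathcal{S}_1$ and $\mathcal{S}_2$ is all that is needed. Nothing is missing.
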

\begin{proof}
Assume that $U_1\in \mathcal{S}_1\vee \mathcal{S}_2$ and $U_2\in \mathcal{S}_1\vee \mathcal{S}_2$.  Then $U_1=A_1\cup A_2$ and $U_2=B_1\cup B_2$ for some $A_1, B_1\in \mathcal{S}_1$ and $A_2, B_2\in \mathcal{S}_2$. Note that  $A_1\cup B_1\in \mathcal{S}_1$ and $A_2\cup B_2\in \mathcal{S}_2$ as $\mathcal{S}_1$ and $\mathcal{S}_2$ are semigroups of sets.  It follows that $U_1\cup U_2=(A_1\cup A_2)\cup (B_1\cup B_2)=(A_1\cup B_1)\cup (A_2\cup B_2)$ is an element of $\mathcal{S}_1\vee \mathcal{S}_2$. 
\end{proof}

We observe that for any sets $A$ and $B$, we have $A\cup B=(A\setminus B)\cup B=(B\setminus A)\cup A$, which implies that $\mathcal{A}\vee \mathcal{B}\subseteq \mathcal{A}*\mathcal{B}$ and $\mathcal{A}\vee \mathcal{B}\subseteq \mathcal{B}*\mathcal{A}$, for any families of sets $\mathcal{A}$ and $\mathcal{B}$ on $X$. In addition, the equality $\mathcal{S}(\mathcal{A})\cup \mathcal{S}(\mathcal{B})=\mathcal{S}(\mathcal{A} \cup \mathcal{B})$ does not need to hold.

\begin{example}\label{Lebesgue27}
On the set  $X=\{a,b,c,d\}$, let $A=\{a,b\}, B=\{b,c\}$ and $D=\{c,d\}$, and consider the families $\mathcal{A}=\{A\}$ and $\mathcal{B}=\{B,D\}$. Note that $\mathcal{A}\cup \mathcal{B}=\{A,B,D\}$, $\mathcal{S}(\mathcal{A})=\mathcal{A}$ and $\mathcal{S}(\mathcal{B})=\{B, D, B\cup D\}$. It is clear that $\mathcal{S}(\mathcal{A})\cup \mathcal{S}(\mathcal{B})=\{A,B,D,B\cup D\}$ while $\mathcal{S}(\mathcal{A}\cup \mathcal{B})=\{A,B,D,A\cup B, A\cup D, B\cup D, A\cup B\cup D\}=\{A,B,D,A\cup B, B\cup D, X\}$. Hence $\mathcal{S}(\mathcal{A})\cup \mathcal{S}(\mathcal{B})\subsetneq \mathcal{S}(\mathcal{A} \cup \mathcal{B})$. Since $\mathcal{A}\vee \mathcal{B}=\{A\cup B, A\cup D\}=\{A\cup B, X\}$, we further remark that  $\mathcal{S}(\mathcal{A})\vee \mathcal{S}(\mathcal{B})=\{A\cup B, A\cup D, A\cup B\cup D\}=\{A\cup B, X\}=\mathcal{S}(\mathcal{A}\vee \mathcal{B})$.
\end{example}

\begin{lemma}\label{Lebesgue28}
If $\mathcal{A}$ and $\mathcal{B}$ are non-empty families of sets on $X$, then the equality $\mathcal{S}(\mathcal{A \vee \mathcal{B}})= \mathcal{S}(\mathcal{A}) \vee \mathcal{S}(\mathcal{B})$ always holds. 
\end{lemma}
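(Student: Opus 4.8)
The plan is to establish the equality by proving the two inclusions separately, using nothing beyond the definitions of $\mathcal{S}(\cdot)$ and of $\vee$, together with the commutativity and associativity of $\cup$ (so that Lemma \ref{Lebesgue26} is available but not strictly needed).

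For $\mathcal{S}(\mathcal{A}\vee\mathcal{B})\subseteq\mathcal{S}(\mathcal{A})\vee\mathcal{S}(\mathcal{B})$ I would take an arbitrary $W\in\mathcal{S}(\mathcal{A}\vee\mathcal{B})$; unpacking the two definitions, there are $n\in\mathbb{N}$, sets $A_1,\dots,A_n\in\mathcal{A}$ and $B_1,\dots,B_n\in\mathcal{B}$ with $W=\bigcup_{i=1}^n(A_i\cup B_i)$. Regrouping this finite union gives $W=\left(\bigcup_{i=1}^nA_i\right)\cup\left(\bigcup_{i=1}^nB_i\right)$, where the first factor lies in $\mathcal{S}(\mathcal{A})$ and the second in $\mathcal{S}(\mathcal{B})$, so $W\in\mathcal{S}(\mathcal{A})\vee\mathcal{S}(\mathcal{B})$. (Alternatively, since $\mathcal{S}(\mathcal{A})\vee\mathcal{S}(\mathcal{B})$ is a semigroup of sets by Lemma \ref{Lebesgue26} and contains $\mathcal{A}\vee\mathcal{B}$ by monotonicity of $\vee$ and $\mathcal{A}\subseteq\mathcal{S}(\mathcal{A})$, $\mathcal{B}\subseteq\mathcal{S}(\mathcal{B})$, it must contain the smallest semigroup $\mathcal{S}(\mathcal{A}\vee\mathcal{B})$ generated by $\mathcal{A}\vee\mathcal{B}$.)

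For the reverse inclusion I would pick $W\in\mathcal{S}(\mathcal{A})\vee\mathcal{S}(\mathcal{B})$ and write $W=U\cup V$ with $U=\bigcup_{i=1}^mA_i$ ($A_i\in\mathcal{A}$) and $V=\bigcup_{j=1}^kB_j$ ($B_j\in\mathcal{B}$). The key observation is that the finite family $\{A_i\cup B_j:1\le i\le m,\ 1\le j\le k\}$ is contained in $\mathcal{A}\vee\mathcal{B}$ and satisfies
\[
\bigcup_{i=1}^m\bigcup_{j=1}^k(A_i\cup B_j)=\bigcup_{i=1}^m\Bigl(A_i\cup\bigcup_{j=1}^kB_j\Bigr)=\Bigl(\bigcup_{i=1}^mA_i\Bigr)\cup\Bigl(\bigcup_{j=1}^kB_j\Bigr)=U\cup V=W,
\]
so $W$ is a finite union of members of $\mathcal{A}\vee\mathcal{B}$, i.e. $W\in\mathcal{S}(\mathcal{A}\vee\mathcal{B})$. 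Equivalently, after fixing any $A_0\in\mathcal{A}$ and $B_0\in\mathcal{B}$ — which exist because the families are non-empty — one may use the shorter representation $W=\bigcup_{i=1}^m(A_i\cup B_0)\,\cup\,\bigcup_{j=1}^k(A_0\cup B_j)$, since $A_0\subseteq U$ and $B_0\subseteq V$.

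The argument is entirely elementary and I do not expect any genuine obstacle; the only place calling for a little care is the bookkeeping in the reverse inclusion, namely recognizing that a pair formed by one finite union drawn from $\mathcal{A}$ and one drawn from $\mathcal{B}$ can be rewritten as a single finite union of pairwise joins $A_i\cup B_j$, which is exactly what the displayed identity records. The direct double-inclusion seems the cleanest route and also makes the statement visibly independent of any non-emptiness subtleties (beyond those already implicit in writing $\mathcal{S}(\mathcal{A}\vee\mathcal{B})$).
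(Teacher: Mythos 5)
Your proof is correct and follows the same double-inclusion strategy as the paper; the forward inclusion is identical in both (regroup $\bigcup_{i=1}^n(A_i\cup B_i)$ as $\left(\bigcup_{i=1}^nA_i\right)\cup\left(\bigcup_{i=1}^nB_i\right)$). The only real difference is in the reverse inclusion: given $U=\bigcup_{i=1}^m A_i$ and $V=\bigcup_{j=1}^k B_j$, the paper splits into the cases $m=k$ and $m\neq k$ and pads the shorter index list with a repeated term $A_k$ so the two lists can be paired termwise, whereas you write $U\cup V$ as the union of all $mk$ pairwise joins $A_i\cup B_j$. Your grid decomposition absorbs the length mismatch without any case analysis, so it is arguably cleaner, and it buys nothing less: either way $W$ is exhibited as a finite union of members of $\mathcal{A}\vee\mathcal{B}$. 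One small caveat about your ``equivalently'' aside: the representation $W=\bigcup_{i=1}^m(A_i\cup B_0)\cup\bigcup_{j=1}^k(A_0\cup B_j)$ is valid only if $A_0$ and $B_0$ are chosen from among the $A_i$ and $B_j$ actually occurring in $U$ and $V$ (e.g.\ $A_0=A_1$, $B_0=B_1$); for an \emph{arbitrary} $A_0\in\mathcal{A}$ the claim $A_0\subseteq U$ need not hold, and the right-hand side could then be strictly larger than $W$. Since that remark is optional and your main argument does not depend on it, the proof stands as written.
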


\begin{proof}
Assume that  $Y\in \mathcal{S}(\mathcal{A}\vee \mathcal{B})$. Then $Y=\bigcup_{i=1}^n Y_i$, where $Y_i\in \mathcal{A}\vee \mathcal{B}$, i.e. $Y_i=A_i\cup B_i$ with $A_i\in \mathcal{A}$, $B_i\in \mathcal{B}$ and $n\in \mathbb{N}$. Hence $Y=\bigcup_{i=1}^n \left(A_i\cup B_i\right)=\left( \bigcup_{i=1}^n A_i\right) \cup \left( \bigcup_{i=1}^n B_i\right)$. Put $A= \bigcup_{i=1}^n A_i$ and $B=\bigcup_{i=1}^n B_i$. It is clear that $A\in \mathcal{S}(\mathcal{A})$ and $B\in \mathcal{S}(\mathcal{B})$, and hence  $Y\in \mathcal{S}(\mathcal{A})\vee \mathcal{S}(\mathcal{B})$ implying that  $\mathcal{S}(\mathcal{A \vee \mathcal{B}})\subseteq  \mathcal{S}(\mathcal{A}) \vee \mathcal{S}(\mathcal{B})$.

Assume that $Y\in  \mathcal{S}(\mathcal{A})\vee \mathcal{S}(\mathcal{B})$. Then $Y=A\cup B$, where $A\in \mathcal{S}(\mathcal{A})$ and $B\in \mathcal{S}(\mathcal{B})$, which means that $A=\bigcup_{i=1}^n A_i$ and $B=\bigcup_{i=1}^m B_i$, where $A_i\in \mathcal{A}$ and $B_i\in \mathcal{B}$ for some $n$ and $m$ in $\mathbb{N}$. We will have two cases to consider:
\begin{enumerate}[$\bullet$]
\item If $n=m$, then $Y=\bigcup_{i=1}^n\left(A_i\cup B_i\right)$ and hence $Y\in \mathcal{S}(\mathcal{A}\vee \mathcal{B})$. 
\item If $n\neq m$, without loosing generality, assume that $n<m$. We can write
$Y=\left[\bigcup_{i=1}^n \left(A_i\cup B_i\right)\right] \cup \left( \bigcup_{i=n+1}^m B_i\right)$. For $i=n+1,n+2,\cdots, m$, put  $A_i=A_k$, where $k$ is some fixed integer in the set $\{1,2,\cdots, n\}$. It follows that $Y=\left[\bigcup_{i=1}^n \left(A_i\cup B_i\right)\right]\cup \left[\bigcup_{i=n+1}^m(A_i\cup B_i)\right]=\bigcup_{i=1}^m \left(A_i\cup B_i\right)$. Since $A_i\cup B_i\in \mathcal{A}\vee \mathcal{B}$ for $i=1,2, \cdots,m$, it follows that  $Y\in \mathcal{S}(\mathcal{A}\vee \mathcal{B})$, and thus $\mathcal{S}(\mathcal{A}) \vee \mathcal{S}(\mathcal{B})\subseteq  \mathcal{S}(\mathcal{A \vee \mathcal{B}})$.
\end{enumerate}
\end{proof}
The following proposition is a generalization of Lemma \ref{Lebesgue28}  for any finite collection of families of sets.

\begin{proposition}\label{Lebesgue29}
Let  $\mathcal{A}_i$ be  a non-empty family of sets on $X$, where $i=1,2, \cdots, n$,  for some $n\in \mathbb{N}$. Then, the equality  $\mathcal{S}\left( \bigvee_{i=1}^n \mathcal{A}_i\right)= \bigvee_{i=1}^n \mathcal{S}(\mathcal{A}_i)$ always holds.
\end{proposition}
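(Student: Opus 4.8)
The plan is to argue by induction on $n$, using Lemma \ref{Lebesgue28} as the engine. The case $n=1$ is trivial and the case $n=2$ is precisely Lemma \ref{Lebesgue28}, so it suffices to assume the equality for some $n-1\geq 2$ and deduce it for $n$.

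Before the induction I would record one elementary observation: the operation $\vee$ on families of sets inherits associativity from the associativity of the union of sets. Concretely, for any families $\mathcal{C}_1,\dots,\mathcal{C}_n$ on $X$ the iterated operation unwinds to $\bigvee_{i=1}^n\mathcal{C}_i=\{C_1\cup\cdots\cup C_n: C_i\in\mathcal{C}_i\}$, and since $(C_1\cup\cdots\cup C_{n-1})\cup C_n=C_1\cup\cdots\cup C_n$ this family equals $\bigl(\bigvee_{i=1}^{n-1}\mathcal{C}_i\bigr)\vee\mathcal{C}_n$. This identity is what licenses peeling off the last family, and it is also what pins down the meaning of the symbol $\bigvee_{i=1}^n$ that appears in the statement (the paper only defined the binary $\vee$).

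With this in hand the inductive step is short. Put $\mathcal{A}=\bigvee_{i=1}^{n-1}\mathcal{A}_i$ and $\mathcal{B}=\mathcal{A}_n$. The observation gives $\bigvee_{i=1}^n\mathcal{A}_i=\mathcal{A}\vee\mathcal{B}$, so Lemma \ref{Lebesgue28} yields $\mathcal{S}\bigl(\bigvee_{i=1}^n\mathcal{A}_i\bigr)=\mathcal{S}(\mathcal{A})\vee\mathcal{S}(\mathcal{B})$. The inductive hypothesis replaces $\mathcal{S}(\mathcal{A})$ by $\bigvee_{i=1}^{n-1}\mathcal{S}(\mathcal{A}_i)$, and a final application of the associativity identity, this time to the families $\mathcal{S}(\mathcal{A}_1),\dots,\mathcal{S}(\mathcal{A}_n)$, rewrites $\bigl(\bigvee_{i=1}^{n-1}\mathcal{S}(\mathcal{A}_i)\bigr)\vee\mathcal{S}(\mathcal{A}_n)$ as $\bigvee_{i=1}^n\mathcal{S}(\mathcal{A}_i)$, which is the claim.

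I do not expect any genuine obstacle: the mathematical content is entirely contained in Lemma \ref{Lebesgue28}, and the only point requiring care is making the associativity of $\vee$ and the meaning of the iterated $\bigvee$ explicit, so that the induction step is a bona fide instance of the binary lemma rather than an informal ``and so on''. A direct alternative would mimic the two inclusions in the proof of Lemma \ref{Lebesgue28}, padding the shorter of the $n$ unions by repeating a term simultaneously in each coordinate; but the inductive route is cleaner and reuses what is already established.
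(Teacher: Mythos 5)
Your proof is correct, and it supplies exactly the induction on $n$ via Lemma \ref{Lebesgue28} that the paper leaves implicit (the paper states Proposition \ref{Lebesgue29} as a generalization of that lemma without giving a proof). Your explicit remark on the associativity of $\vee$ and the meaning of the iterated $\bigvee$ is a welcome precision rather than a deviation.
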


It follows from Proposition \ref{Lebesgue25}, that if $\mathcal{S}_1$ and $\mathcal{S}_2$ are semigroups of sets, then the families $(\mathcal{S}_1\vee \mathcal{S}_2)*\mathcal{I}$  and $\mathcal{I}*(\mathcal{S}_1\vee \mathcal{S}_2)$  are semigroups of sets for any ideal of sets $\mathcal{I}$ on $X$. However, no one of the inclusions
 $\mathcal{S}_1*\mathcal{I}\subseteq (\mathcal{S}_1\vee \mathcal{S}_2)*\mathcal{I}$, $\mathcal{S}_2*\mathcal{I}\subseteq (\mathcal{S}_1\vee \mathcal{S}_2)*\mathcal{I}$, $\mathcal{I}*\mathcal{S}_1\subseteq \mathcal{I}*(\mathcal{S}_1\vee \mathcal{S}_2)$ and $\mathcal{I}*\mathcal{S}_2\subseteq \mathcal{I}*(\mathcal{S}_1\vee \mathcal{S}_2)$ needs to hold.
 
\begin{example}\label{Lebesgue210}
Let $X$ be a non-empty set having atleast three elements, i.e. $\card (X)\geq 3$, and let $A$ be a non-empty proper subset of $X$. Let $B=X\setminus A$. Consider  the semigroups $\mathcal{S}_1=\{A, X\}$, $\mathcal{S}_2=\{B, X\}$ and the ideals of sets $\mathcal{I}=\mathcal{P}(A)$ and $\mathcal{K}=\mathcal{P}(B)$ on $X$.  It is clear that $\mathcal{S}_1\vee \mathcal{S}_2=\{X\}$, and $\emptyset$ and $A$ cannot be elements of $ (\mathcal{S}_1\vee \mathcal{S}_2)*\mathcal{I}$, but $\emptyset$ and $A$ are elements of $\mathcal{S}_1*\mathcal{I}$.  Similarly, the collection $\mathcal{S}_2*\mathcal{K}$ contains the elements $\emptyset$ and $B$, but the family $ (\mathcal{S}_1\vee \mathcal{S}_2)*\mathcal{K}$ cannot contain $\emptyset$ and $B$. Hence $\mathcal{S}_1*\mathcal{I}\nsubseteq (\mathcal{S}_1\vee \mathcal{S}_2)*\mathcal{I}$, and $\mathcal{S}_2*\mathcal{K}\nsubseteq (\mathcal{S}_1\vee \mathcal{S}_2)*\mathcal{K}$.   Further, observe that $\mathcal{I}*(\mathcal{S}_1\vee \mathcal{S}_2)=\{X\}=\mathcal{K}*(\mathcal{S}_1\vee \mathcal{S}_2)$. The semigroup  $\mathcal{I}*\mathcal{S}_2$ contains the set $B$ and  the semigroup  $\mathcal{K}*\mathcal{S}_1$ contains the set $A$. Hence $\mathcal{I}*\mathcal{S}_2\nsubseteq \mathcal{I}*(\mathcal{S}_1\vee \mathcal{S}_2)$ and  $\mathcal{K}*\mathcal{S}_1\nsubseteq \mathcal{K}*(\mathcal{S}_1\vee \mathcal{S}_2)$.
\end{example}

\begin{proposition}\label{Lebesgue211}
Let $\mathcal{S}_1$ and $\mathcal{S}_2$ be semigroups of sets on $X$. If $\mathcal{I}$ is an ideal of sets on $X$, then the following equalities always hold:

\begin{enumerate}[(i)]
\item $(\mathcal{S}_1\vee \mathcal{S}_2)*\mathcal{I}=(\mathcal{S}_1*\mathcal{I})\vee (\mathcal{S}_2*\mathcal{I})$.
\item $ \mathcal{I}*(\mathcal{S}_1\vee \mathcal{S}_2)=(\mathcal{I}*\mathcal{S}_1)\vee (\mathcal{I}*\mathcal{S}_2)$.
\end{enumerate}
\end{proposition}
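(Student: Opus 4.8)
The plan is to prove each of the two identities by establishing the two set-inclusions, arguing directly from the definitions of $*$ and $\vee$ and using only the ideal axioms for $\mathcal{I}$; recall in particular that since $\mathcal{I}$ is nonempty and downward closed we have $\emptyset\in\mathcal{I}$ and that every subset of a member of $\mathcal{I}$ again lies in $\mathcal{I}$, while $\mathcal{I}$ is also closed under finite unions. It is worth keeping Example \ref{Lebesgue210} in mind: the naive one-sided containments fail, so the content of the proposition is precisely that the two ``pieces'' coming from $\mathcal{S}_1$ and $\mathcal{S}_2$ can be separated cleanly once both a deleted set and an added set are permitted.

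For the first identity $(\mathcal{S}_1\vee \mathcal{S}_2)*\mathcal{I}=(\mathcal{S}_1*\mathcal{I})\vee (\mathcal{S}_2*\mathcal{I})$, the inclusion ``$\subseteq$'' is obtained by starting from a typical element $((S_1\cup S_2)\setminus N)\cup M$ with $S_i\in\mathcal{S}_i$ and $N,M\in\mathcal{I}$, rewriting $(S_1\cup S_2)\setminus N=(S_1\setminus N)\cup(S_2\setminus N)$, and distributing the added set as $[(S_1\setminus N)\cup M]\cup[(S_2\setminus N)\cup\emptyset]$; the first bracket lies in $\mathcal{S}_1*\mathcal{I}$ and the second in $\mathcal{S}_2*\mathcal{I}$ (here $\emptyset\in\mathcal{I}$ is used). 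For ``$\supseteq$'', take $[(S_1\setminus N_1)\cup M_1]\cup[(S_2\setminus N_2)\cup M_2]$, put $N=N_1\cup N_2\in\mathcal{I}$, and compare $(S_1\setminus N_1)\cup(S_2\setminus N_2)$ with $(S_1\cup S_2)\setminus N$: a short computation shows the former equals the latter together with the leftover set $E=(S_1\cap N_2\setminus N_1)\cup(S_2\cap N_1\setminus N_2)$, and since $E\subseteq N\in\mathcal{I}$ we get $E\in\mathcal{I}$. Hence the whole set equals $((S_1\cup S_2)\setminus N)\cup(E\cup M_1\cup M_2)$, which belongs to $(\mathcal{S}_1\vee \mathcal{S}_2)*\mathcal{I}$ because $S_1\cup S_2\in\mathcal{S}_1\vee\mathcal{S}_2$ and $E\cup M_1\cup M_2\in\mathcal{I}$.

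For the second identity $\mathcal{I}*(\mathcal{S}_1\vee \mathcal{S}_2)=(\mathcal{I}*\mathcal{S}_1)\vee (\mathcal{I}*\mathcal{S}_2)$, the inclusion ``$\subseteq$'' starts from $(I\setminus(S_1\cup S_2))\cup(T_1\cup T_2)$ with $I\in\mathcal{I}$ and $S_i,T_i\in\mathcal{S}_i$; writing $J=I\setminus(S_1\cup S_2)$, which lies in $\mathcal{I}$ by downward closure and is disjoint from $S_1$ and from $S_2$, one has $J\cup T_1=(J\setminus S_1)\cup T_1\in\mathcal{I}*\mathcal{S}_1$ and $J\cup T_2=(J\setminus S_2)\cup T_2\in\mathcal{I}*\mathcal{S}_2$, whose union is exactly the given set. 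For ``$\supseteq$'', the key observation is that $I_1\setminus S_1\subseteq I_1$ and $I_2\setminus S_2\subseteq I_2$, so $I:=(I_1\setminus S_1)\cup(I_2\setminus S_2)\in\mathcal{I}$; then a typical element $[(I_1\setminus S_1)\cup T_1]\cup[(I_2\setminus S_2)\cup T_2]$ equals $I\cup T_1\cup T_2=(I\setminus(T_1\cup T_2))\cup(T_1\cup T_2)$, which lies in $\mathcal{I}*(\mathcal{S}_1\vee \mathcal{S}_2)$ since $T_1\cup T_2\in\mathcal{S}_1\vee\mathcal{S}_2$ and $I\in\mathcal{I}$.

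The only place that asks for genuine care is the ``$\supseteq$'' inclusion of the first identity: one must correctly identify the leftover set $E$ that appears when $N_1,N_2$ are replaced by their union, and verify that it gets absorbed by $\mathcal{I}$ (via the added-set slot) rather than needing to come from $\mathcal{S}_1\vee\mathcal{S}_2$. Everything else reduces to choosing the right deleted set and added set and invoking the ideal axioms. I note in passing that the two equalities in fact hold as set identities for arbitrary nonempty families $\mathcal{S}_1,\mathcal{S}_2$; the semigroup hypothesis is the natural ambient assumption because, combined with Lemma \ref{Lebesgue26} and Proposition \ref{Lebesgue25}, it ensures that both sides are genuinely semigroups of sets.
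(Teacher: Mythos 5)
Your proof is correct, and its overall strategy coincides with the paper's: both arguments verify the four inclusions by direct element-chasing, using only the ideal axioms (closure under finite unions and under subsets, hence $\emptyset\in\mathcal{I}$). Three of the four inclusions are handled essentially identically; the one place where you genuinely diverge is the inclusion $(\mathcal{S}_1*\mathcal{I})\vee(\mathcal{S}_2*\mathcal{I})\subseteq(\mathcal{S}_1\vee\mathcal{S}_2)*\mathcal{I}$. There the paper passes to complements and applies De Morgan's laws to rewrite $(S_1\setminus N)\cup(S_2\setminus P)$ as $(S_1\cup S_2)\setminus J$ for the smaller deleted set $J=(S_1^{c}\cap P)\cup(S_2^{c}\cap N)\cup(N\cap P)\in\mathcal{I}$, so that nothing needs to be added back; you instead delete the full union $N=N_1\cup N_2$ and absorb the resulting leftover $E=(S_1\cap N_2\setminus N_1)\cup(S_2\cap N_1\setminus N_2)\subseteq N$ into the added-set slot. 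The two decompositions are equivalent, but yours is arguably more transparent, since it avoids the chain of complementations and makes visible exactly which points are lost and recovered. Your closing observation is also accurate: none of the four inclusions uses closure of $\mathcal{S}_1$ or $\mathcal{S}_2$ under unions (the only unions formed, such as $S_1\cup S_2$ and $T_1\cup T_2$, land in $\mathcal{S}_1\vee\mathcal{S}_2$ by definition), so the identities hold for arbitrary nonempty families, the semigroup hypothesis serving only to guarantee via Proposition \ref{Lebesgue25} and Lemma \ref{Lebesgue26} that the families involved are themselves semigroups.
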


\begin{proof}
\begin{enumerate}[(i).]
\item Assume that $A\in (\mathcal{S}_1\vee \mathcal{S}_2)*\mathcal{I}$. Then $A=[(S_1\cup S_2)\setminus I]\cup K $ where $S_1\in \mathcal{S}_1, S_2\in \mathcal{S}_2$ and $I, K\in \mathcal{I}$. It is clear that $A=(S_1\setminus I)\cup (S_2\setminus I)\cup K=[(S_1\setminus I)\cup K]\cup [(S_2\setminus I)\cup K]\in (\mathcal{S}_1*\mathcal{I})\vee (\mathcal{S}_2*\mathcal{I})$.

Assume that $A\in (\mathcal{S}_1*\mathcal{I})\vee (\mathcal{S}_2*\mathcal{I})$. Then $A=[(S_1\setminus N)\cup L]\cup [(S_2\setminus P)\cup R]$, where $S_1\in \mathcal{S}_1, S_2\in \mathcal{S}_2$ and $N,L,P,R \in \mathcal{I}$. Note that
 $A=(S_1\setminus N)\cup (S_2\setminus P)\cup (L\cup R)$. Putting $I=L\cup R\in \mathcal{I}$ it follows that  $A=\left[(S_1\cap N^{c})\cup (S_2\cap P^{c})\right]^{cc}\cup I=   \left[(S_1\cap N^{c})^{c}\cap (S_2\cap P^{c})^{c}\right]^{c}\cup I= \left[(S_1^{c}\cup N)\cap (S_2^{c}\cup P)\right]^{c}\cup I$. Furthermore,
$A=[(S_1^{c}\cap S_2^{c})\cup \left((S_1^{c}\cap P)\cup (S_2^{c}\cap N)\cup (N\cap P)\right)]^{c}\cup I $. Put $J=(S_1^{c}\cap P)\cup (S_2^{c}\cap N)\cup (N\cap P)\in \mathcal{I} $ and note that  $A=\left[ (S_1^c\cap S_2^c)\cup J   \right]^c\cup I=\left[(S_1^c\cap S_2^c)^c\cap J^c\right]\cup I=[(S_1\cup S_2)\setminus J]\cup I$. Since $S_1\in \mathcal{S}_1, S_2 \in \mathcal{S}_2 $ and $J,I \in \mathcal{I}$ then we have $A\in (\mathcal{S}_1\vee \mathcal{S}_2)*\mathcal{I}$.

\item Assume that  $A\in (\mathcal{I}*\mathcal{S}_1)\vee (\mathcal{I}*\mathcal{S}_2)$. Then $A=\left[(I_1\setminus U_1)\cup W_1\right] \cup \left[ (I_2\setminus U_2)\cup W_2\right]$ where $I_1, I_2\in \mathcal{I}, U_1, W_1\in \mathcal{S}_1$ and $U_2, W_2\in \mathcal{S}_2$. It is clear that $A=(I_1\setminus U_1)\cup (I_2\setminus U_2)\cup (W_1\cup W_2)=I\cup (W_1\cup W_2)$, where $I=(I_1\setminus U_1)\cup (I_2\setminus U_2)$. Since  $I\in \mathcal{I}$ then the set $A$ can be written as $A=\left [I\setminus (W_1\cup W_2)\right]\cup (W_1\cup W_2) \in \mathcal{I}*(\mathcal{S}_1\vee \mathcal{S}_2)$.

Assume that $A\in \mathcal{I}*\left(\mathcal{S}_1\vee \mathcal{S}_2\right)$. Then  $A=\left[I\setminus (U_1\cup U_2)\right] \cup (W_1\cup W_2)$ where $I\in \mathcal{I}, U_1, W_1\in \mathcal{S}_1$ and $U_2, W_2\in \mathcal{S}_2$. Since $I\setminus (U_1\cup U_2)=((I\setminus U_2)\setminus U_1))\cup ((I\setminus U_1)\setminus U_2)$, then we get $A=\left[(I\setminus U_2)\setminus U_1\right]\cup \left[(I\setminus U_1)\setminus U_2\right]\cup (W_1\cup W_2)=\left[\left((I\setminus U_2)\setminus U_1\right) \cup W_1\right]\cup \left[\left((I\setminus U_1)\setminus U_2\right)\cup W_2\right]$. Since  the sets $I\setminus U_1$ and $I\setminus U_2$ are elements of $\mathcal{I}$ then we have $A\in (\mathcal{I}*\mathcal{S}_1)\vee (\mathcal{I}*\mathcal{S}_2)$.
\end{enumerate}
\end{proof}

\begin{corollary}\label{Lebesgue212}
Let  $\mathcal{S}_1, \mathcal{S}_2, \cdots, \mathcal{S}_n$ be a finite collection of semigroups of sets on $X$. If $\mathcal{I}$ is an ideal of sets on $X$, then the following equalities hold:
\begin{enumerate}[(i).]
\item $\left(\bigvee_{i=1}^n \mathcal{S}_i\right)*\mathcal{I}=\bigvee_{i=1}^{n} \left( \mathcal{S}_i*\mathcal{I} \right)$.
\item $\mathcal{I}*\left(\bigvee_{i=1}^n \mathcal{S}_i\right)=\bigvee_{i=1}^{n} \left( \mathcal{I}*\mathcal{S}_i \right)$.
\end{enumerate}
\end{corollary}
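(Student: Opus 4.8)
The plan is to prove both identities simultaneously by induction on $n$, using Proposition \ref{Lebesgue211} as the engine of the induction: the case $n=1$ is a tautology and the case $n=2$ is precisely Proposition \ref{Lebesgue211}, so the only real work is to pass from $n$ to $n+1$.

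Before running the induction I would record two elementary facts about the operation $\vee$. First, $\vee$ is associative and commutative on families of sets: for any families $\mathcal{A},\mathcal{B},\mathcal{C}$ on $X$ one has $(\mathcal{A}\vee\mathcal{B})\vee\mathcal{C}=\mathcal{A}\vee(\mathcal{B}\vee\mathcal{C})=\{A\cup B\cup C: A\in\mathcal{A},\,B\in\mathcal{B},\,C\in\mathcal{C}\}$ because $\cup$ is associative and commutative; consequently $\bigvee_{i=1}^{n}\mathcal{A}_i$ is unambiguous, equals $\{\bigcup_{i=1}^{n}A_i: A_i\in\mathcal{A}_i\}$, and may be regrouped freely — in particular $\bigvee_{i=1}^{n+1}\mathcal{S}_i=\bigl(\bigvee_{i=1}^{n}\mathcal{S}_i\bigr)\vee\mathcal{S}_{n+1}$. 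Second, iterating Lemma \ref{Lebesgue26} (a trivial induction) shows that a finite $\vee$-join of semigroups of sets is again a semigroup of sets; in particular $\mathcal{T}:=\bigvee_{i=1}^{n}\mathcal{S}_i$ is a semigroup of sets whenever each $\mathcal{S}_i$ is.

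With these facts in hand the inductive step will be immediate. Suppose both identities hold for $n$. Since $\mathcal{T}=\bigvee_{i=1}^{n}\mathcal{S}_i$ and $\mathcal{S}_{n+1}$ are semigroups of sets, Proposition \ref{Lebesgue211} applies to the pair $(\mathcal{T},\mathcal{S}_{n+1})$ and gives
\[
\Bigl(\bigvee_{i=1}^{n+1}\mathcal{S}_i\Bigr)*\mathcal{I}=(\mathcal{T}\vee\mathcal{S}_{n+1})*\mathcal{I}=(\mathcal{T}*\mathcal{I})\vee(\mathcal{S}_{n+1}*\mathcal{I}).
\]
By the induction hypothesis $\mathcal{T}*\mathcal{I}=\bigvee_{i=1}^{n}(\mathcal{S}_i*\mathcal{I})$; substituting this and using associativity of $\vee$ yields $\bigl(\bigvee_{i=1}^{n+1}\mathcal{S}_i\bigr)*\mathcal{I}=\bigvee_{i=1}^{n+1}(\mathcal{S}_i*\mathcal{I})$. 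The identity $\mathcal{I}*\bigl(\bigvee_{i=1}^{n+1}\mathcal{S}_i\bigr)=\bigvee_{i=1}^{n+1}(\mathcal{I}*\mathcal{S}_i)$ follows in exactly the same way from the second identity of Proposition \ref{Lebesgue211}.

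I do not expect a serious obstacle: the argument is a routine bootstrapping from the binary case, entirely parallel to how Proposition \ref{Lebesgue29} is deduced from Lemma \ref{Lebesgue28}. The only points that require any care are bookkeeping ones, namely checking that $\vee$ is associative (so that $\bigvee_{i=1}^{n}$ and all the regroupings are legitimate) and confirming that the partial join $\bigvee_{i=1}^{n}\mathcal{S}_i$ is actually a semigroup of sets, since Proposition \ref{Lebesgue211} can only be invoked when both of its inputs are semigroups.
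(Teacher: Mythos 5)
Your proof is correct and is precisely the argument the paper intends: the corollary is stated without proof as a routine induction from Proposition \ref{Lebesgue211}, and your induction — together with the necessary bookkeeping that $\vee$ is associative and that $\bigvee_{i=1}^{n}\mathcal{S}_i$ is again a semigroup by iterating Lemma \ref{Lebesgue26}, so that Proposition \ref{Lebesgue211} may legitimately be applied at each step — supplies exactly the missing details.
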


It follows from Proposition \ref{Lebesgue211} that if  $\mathcal{A}$ and $\mathcal{B}$ are semigroups  of sets on $X$, and $\mathcal{I}$ is an ideal of sets on $X$, then  $\mathcal{S}[(\mathcal{A}\vee \mathcal{B})*\mathcal{I}]=\mathcal{S}\left[(\mathcal{A}*\mathcal{I})\vee (\mathcal{B}*\mathcal{I})\right]$ and $\mathcal{S}[\mathcal{I}*(\mathcal{A}\vee \mathcal{B})]=\mathcal{S}\left[(\mathcal{I}*\mathcal{A})\vee (\mathcal{I}*\mathcal{B})\right]$.

\begin{question}\label{Lebesgue213}
\emph{Let $\mathcal{A}$ and $\mathcal{B}$ be families of sets, and let $\mathcal{I}$ be an ideal of sets. What is the relationship (in the sense of inclusion) between the semigroups of sets $\mathcal{S}(\mathcal{A}\vee \mathcal{B})*\mathcal{I}$ and $\mathcal{S}[(\mathcal{A}\vee \mathcal{B})*\mathcal{I}]$?}
\end{question}

We observe that if $\mathcal{A}$ and $\mathcal{B}$ are semigroups of sets then by Example \ref{Lebesgue22},  we get $\mathcal{S}(\mathcal{A}\vee \mathcal{B})*\mathcal{I}= (\mathcal{A}\vee \mathcal{B})*\mathcal{I}=\left(\mathcal{A}*\mathcal{I}\right)\vee \left(\mathcal{B}*\mathcal{I}\right)=\mathcal{S}[(\mathcal{A}\vee \mathcal{B})*\mathcal{I}]$. 

The following statement can be used in the extension of  semigroup of sets. Its proof is based on the properties of ideals of sets goes in the same line as Proposition \ref{Lebesgue211}.

\begin{proposition}\label{Lebesgue214}
Let $\mathcal{I}_{1}$ and $\mathcal{I}_{2}$ be ideals of sets on $X$ and let $\mathcal{S}$ be a semigroup of sets on $X$. Then, the following hold:
	\begin{enumerate}[(i)]
		\item $\mathcal{S}* \mathcal{I}_{i}\subseteq \mathcal{S}* (\mathcal{I}_{1}\vee \mathcal{I}_{2})$ for $i=1,2$ and 
$(\mathcal{S}*\mathcal{I}_{1})\vee (\mathcal{S}*\mathcal{I}_{2}) \subseteq \mathcal{S}* (\mathcal{I}_{1}\vee \mathcal{I}_{2})$.
		\item $\mathcal{I}_{i}* \mathcal{S}\subseteq (\mathcal{I}_{1}\vee \mathcal{I}_{2})* \mathcal{S}= (\mathcal{I}_{1}*\mathcal{S})\vee (\mathcal{I}_{2}* \mathcal{S}) \text{ for }i=1,2$.
	\end{enumerate}
\end{proposition}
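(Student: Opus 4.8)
The plan is to peel off everything that is purely formal and then concentrate on the one inclusion that actually carries content. First I would check that $\mathcal{I}_1\vee\mathcal{I}_2$ is again an ideal of sets on $X$: it is closed under finite unions because $\mathcal{I}_1$ and $\mathcal{I}_2$ are, and if $B\subseteq I_1\cup I_2$ with $I_j\in\mathcal{I}_j$ then $B=(B\cap I_1)\cup(B\cap I_2)$ with $B\cap I_j\subseteq I_j\in\mathcal{I}_j$, so $B\cap I_j\in\mathcal{I}_j$ and $B\in\mathcal{I}_1\vee\mathcal{I}_2$. By Proposition~\ref{Lebesgue25} this makes $\mathcal{S}*(\mathcal{I}_1\vee\mathcal{I}_2)$ and $(\mathcal{I}_1\vee\mathcal{I}_2)*\mathcal{S}$ semigroups of sets, and together with Lemma~\ref{Lebesgue26} it makes $(\mathcal{S}*\mathcal{I}_1)\vee(\mathcal{S}*\mathcal{I}_2)$ and $(\mathcal{I}_1*\mathcal{S})\vee(\mathcal{I}_2*\mathcal{S})$ semigroups of sets as well. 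Since every ideal of sets is non-empty and closed under subsets it contains $\emptyset$, so writing $I=I\cup\emptyset$ shows $\mathcal{I}_i\subseteq\mathcal{I}_1\vee\mathcal{I}_2$ for $i=1,2$; by the monotonicity of $*$ recorded just before Proposition~\ref{Lebesgue25} this already gives the announced inclusions $\mathcal{S}*\mathcal{I}_i\subseteq\mathcal{S}*(\mathcal{I}_1\vee\mathcal{I}_2)$ and $\mathcal{I}_i*\mathcal{S}\subseteq(\mathcal{I}_1\vee\mathcal{I}_2)*\mathcal{S}$. In particular $\mathcal{S}*\mathcal{I}_1,\mathcal{S}*\mathcal{I}_2$ are subfamilies of the semigroup $\mathcal{S}*(\mathcal{I}_1\vee\mathcal{I}_2)$ and $\mathcal{I}_1*\mathcal{S},\mathcal{I}_2*\mathcal{S}$ are subfamilies of the semigroup $(\mathcal{I}_1\vee\mathcal{I}_2)*\mathcal{S}$; since a semigroup of sets that contains two subfamilies also contains their join, it follows that $(\mathcal{S}*\mathcal{I}_1)\vee(\mathcal{S}*\mathcal{I}_2)\subseteq\mathcal{S}*(\mathcal{I}_1\vee\mathcal{I}_2)$ and $(\mathcal{I}_1*\mathcal{S})\vee(\mathcal{I}_2*\mathcal{S})\subseteq(\mathcal{I}_1\vee\mathcal{I}_2)*\mathcal{S}$. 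So the only two things left to prove are the reverse inclusions $\mathcal{S}*(\mathcal{I}_1\vee\mathcal{I}_2)\subseteq(\mathcal{S}*\mathcal{I}_1)\vee(\mathcal{S}*\mathcal{I}_2)$ and $(\mathcal{I}_1\vee\mathcal{I}_2)*\mathcal{S}\subseteq(\mathcal{I}_1*\mathcal{S})\vee(\mathcal{I}_2*\mathcal{S})$.

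Next I would dispatch the second of these by the distributivity of set-difference over a common subtrahend, which is exactly the device used in the proof of Proposition~\ref{Lebesgue211}. Let $A\in(\mathcal{I}_1\vee\mathcal{I}_2)*\mathcal{S}$, say $A=((I_1\cup I_2)\setminus S)\cup T$ with $I_j\in\mathcal{I}_j$ and $S,T\in\mathcal{S}$. Then $(I_1\cup I_2)\setminus S=(I_1\setminus S)\cup(I_2\setminus S)$, so $A=[(I_1\setminus S)\cup T]\cup[(I_2\setminus S)\cup T]$, which exhibits $A$ as the union of an element of $\mathcal{I}_1*\mathcal{S}$ and an element of $\mathcal{I}_2*\mathcal{S}$; hence $(\mathcal{I}_1\vee\mathcal{I}_2)*\mathcal{S}\subseteq(\mathcal{I}_1*\mathcal{S})\vee(\mathcal{I}_2*\mathcal{S})$ and that equality is complete. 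The point is that the removed set $S$ is the same in both terms, so the difference splits as a union.

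The hard part will be the first reverse inclusion, $\mathcal{S}*(\mathcal{I}_1\vee\mathcal{I}_2)\subseteq(\mathcal{S}*\mathcal{I}_1)\vee(\mathcal{S}*\mathcal{I}_2)$, and I would not expect it to hold for a general semigroup $\mathcal{S}$. Starting from $A=(S\setminus(J_1\cup J_2))\cup(K_1\cup K_2)$ with $S\in\mathcal{S}$ and $J_j,K_j\in\mathcal{I}_j$, one wants $A=B_1\cup B_2$ with $B_j\in\mathcal{S}*\mathcal{I}_j$; the only natural candidate is $B_j=(S\setminus(J_1\cup J_2))\cup K_j$, which forces $B_j$ to have the form $(S'\setminus J_j)\cup K_j$ with $S'=S\setminus J_{3-j}$, and $S\setminus J_{3-j}$ need not belong to $\mathcal{S}$, since a semigroup of sets is only assumed closed under unions. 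The obstruction is structural: in Proposition~\ref{Lebesgue211} the subtracted set is common to the two terms and the difference distributes as a union, whereas here the subtracted set is the union $J_1\cup J_2$ and $S\setminus(J_1\cup J_2)=(S\setminus J_1)\cap(S\setminus J_2)$ ``distributes'' as an intersection, so the splitting collapses. Concretely, on $X=\{a,b,c\}$ with $\mathcal{S}=\{X\}$, $\mathcal{I}_1=\{\emptyset,\{a\}\}$ and $\mathcal{I}_2=\{\emptyset,\{b\}\}$, one has $\{c\}=X\setminus\{a,b\}\in\mathcal{S}*(\mathcal{I}_1\vee\mathcal{I}_2)$, while $\mathcal{S}*\mathcal{I}_1=\{X,\{b,c\}\}$ and $\mathcal{S}*\mathcal{I}_2=\{X,\{a,c\}\}$ give $(\mathcal{S}*\mathcal{I}_1)\vee(\mathcal{S}*\mathcal{I}_2)=\{X\}$, so the equality $\mathcal{S}*(\mathcal{I}_1\vee\mathcal{I}_2)=(\mathcal{S}*\mathcal{I}_1)\vee(\mathcal{S}*\mathcal{I}_2)$ can fail. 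The natural remedy is to add the hypothesis that $\mathcal{S}$ is closed under subsets (equivalently, that $\mathcal{S}$ is itself an ideal of sets): then $S\setminus J_{3-j}\in\mathcal{S}$, the candidate $B_j=(S\setminus(J_1\cup J_2))\cup K_j$ genuinely lies in $\mathcal{S}*\mathcal{I}_j$, and both equalities in the proposition hold. Without that hypothesis I would state only the inclusions $\mathcal{S}*\mathcal{I}_i\subseteq\mathcal{S}*(\mathcal{I}_1\vee\mathcal{I}_2)$ and $(\mathcal{S}*\mathcal{I}_1)\vee(\mathcal{S}*\mathcal{I}_2)\subseteq\mathcal{S}*(\mathcal{I}_1\vee\mathcal{I}_2)$, which is exactly what the arguments above deliver.
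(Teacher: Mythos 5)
Your proposal is correct in everything it actually proves, and it exposes a genuine defect in the statement itself. The paper gives no written proof of this proposition --- it only remarks that the argument ``goes in the same line as Proposition~\ref{Lebesgue211}'' --- so there is no step-by-step argument to compare against; what your analysis shows is that the analogy cannot be carried through for the first equality. Your preliminary observations (that $\mathcal{I}_1\vee\mathcal{I}_2$ is again an ideal, that the inclusions $\mathcal{S}*\mathcal{I}_i\subseteq\mathcal{S}*(\mathcal{I}_1\vee\mathcal{I}_2)$ and $\mathcal{I}_i*\mathcal{S}\subseteq(\mathcal{I}_1\vee\mathcal{I}_2)*\mathcal{S}$ follow from $\mathcal{I}_i\subseteq\mathcal{I}_1\vee\mathcal{I}_2$ and the monotonicity of $*$, and that a family closed under finite unions absorbs the join of any two of its subfamilies) are all sound, and your distributivity argument $(I_1\cup I_2)\setminus S=(I_1\setminus S)\cup(I_2\setminus S)$ settles the remaining inclusion for the second chain. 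So the part of the proposition concerning $(\mathcal{I}_1\vee\mathcal{I}_2)*\mathcal{S}$ is fully and correctly proved.

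Your counterexample to the first equality is valid: with $X=\{a,b,c\}$, $\mathcal{S}=\{X\}$, $\mathcal{I}_1=\{\emptyset,\{a\}\}$ and $\mathcal{I}_2=\{\emptyset,\{b\}\}$ one computes $\mathcal{S}*\mathcal{I}_1=\{X,\{b,c\}\}$ and $\mathcal{S}*\mathcal{I}_2=\{X,\{a,c\}\}$, whence $(\mathcal{S}*\mathcal{I}_1)\vee(\mathcal{S}*\mathcal{I}_2)=\{X\}$, while $\{c\}=(X\setminus\{a,b\})\cup\emptyset$ belongs to $\mathcal{S}*(\mathcal{I}_1\vee\mathcal{I}_2)$. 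Hence only the inclusion $(\mathcal{S}*\mathcal{I}_1)\vee(\mathcal{S}*\mathcal{I}_2)\subseteq\mathcal{S}*(\mathcal{I}_1\vee\mathcal{I}_2)$ holds in general, and your diagnosis of the obstruction is exactly right: in Proposition~\ref{Lebesgue211} the subtracted set is common to the two terms, so the difference distributes over the union, whereas here $S\setminus(J_1\cup J_2)=(S\setminus J_1)\cap(S\setminus J_2)$ and a semigroup of sets need not contain $S\setminus J_{3-j}$; assuming $\mathcal{S}$ closed under subsets (i.e., an ideal) restores the equality. Two remarks to complete the picture: the same counterexample refutes the first equality of Corollary~\ref{Lebesgue215}, which should likewise be weakened to the one-sided inclusion; and the error does not propagate into the later sections, since the measurability results there rely on Proposition~\ref{Lebesgue211} and Lemma~\ref{Lebesgue28} rather than on this proposition.
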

\begin{proof} 
\begin{enumerate}[(i).]
\item Since $\mathcal{I}_{1} \subseteq \mathcal{I}_{1}\vee \mathcal{I}_{2}$ and  $\mathcal{I}_{2} \subseteq \mathcal{I}_{1}\vee \mathcal{I}_{2}$, then inclusions $\mathcal{S}* \mathcal{I}_{1}\subseteq \mathcal{S}* (\mathcal{I}_{1}\vee \mathcal{I}_{2})$ and $\mathcal{S}* \mathcal{I}_{2}\subseteq \mathcal{S}* (\mathcal{I}_{1}\vee \mathcal{I}_{2})$ follow directly.

Assume that $A\in (\mathcal{S}*\mathcal{I}_{1})\vee (\mathcal{S}*\mathcal{I}_{2})$. Then  $A= [(S_{1}\setminus I_{1})\cup I_{2}] \cup [(S_{2}\setminus I_{3})\cup I_{4}]$, where $S_{1}, S_{2} \in \mathcal{S}, I_{1}, I_{2}\in \mathcal{I}_{1}$ and $I_{3}, I_{4}\in \mathcal{I}_{2}$.  We can write that $A= (S_{1}\setminus I_{1}) \cup (S_{2}\setminus I_{3}) \cup (I_{2}\cup I_{4})$.  It is clear that  $(S_{1}\setminus I_{1}) \cup (S_{2}\setminus I_{3})= \left[ (S_{1}\setminus I_{1})\cup (S_{2}\setminus I_{3})\right]^{cc} = \left[ \left[(S_{1}\setminus I_{1})\cup (S_{2}\setminus I_{3}) \right]^{c}\right]^{c} = \left[ (S_{1}\cap I_{1}^{c})^{c}\cap (S_{2}\cap I_{3}^{c})^{c}\right]^{c}$, and we can easily write the following expression: $\left[(S_{1}^{c}\cup I_{1})\cap (S_{2}^{c}\cup I_{3})\right]^{c}=$\\
$\left[ (S_{1}^{c}\cap S_{2}^{c}) \cup (S_{1}^{c}\cap I_{3})\cup (S_{2}^{c}\cap I_{1})\cup (I_{1}\cap I_{3})\right]^{c}$. Since $ S_{2}^{c}\cap I_{1}\subseteq I_{1}$, $S_{1}^{c}\cap I_{3}\subseteq I_{3}$, $ I_{1}\cap I_{3}\subseteq I_{1}$ and $I_1\cap I_3\subseteq I_{3}$, it follows that  $I= (S_{1}^{c}\cap I_{3})\cup (S_{2}^{c}\cap I_{1})\cup (I_{1}\cap I_{3})\in \mathcal{I}_1\vee \mathcal{I}_2$.  It follows that $A=[(S_1^c\cap S_2^c)\cup I]^c\cup (I_2\cup I_4)=(S_1\cup S_2)\setminus I)\cup (I_2\cup I_4)$. Since $\mathcal{S}$ is a semigroup of sets then $S_1\cup S_2\in \mathcal{S}$, and thus $A\in \mathcal{S}* (\mathcal{I}_{1}\vee \mathcal{I}_{2})$.

\item  Since $\mathcal{I}_{1} \subseteq \mathcal{I}_{1}\vee \mathcal{I}_{2}$ and  $\mathcal{I}_{2} \subseteq \mathcal{I}_{1}\vee \mathcal{I}_{2}$ the inclusions $ \mathcal{I}_{1}* \mathcal{S}\subseteq (\mathcal{I}_{1}\vee \mathcal{I}_{2})* \mathcal{S}$ and $ \mathcal{I}_{2}* \mathcal{S}\subseteq (\mathcal{I}_{1}\vee \mathcal{I}_{2})* \mathcal{S}$ follow directly.

Assume that $A\in (\mathcal{I}_{1}\vee \mathcal{I}_{2})* \mathcal{S}$. Then $A= \left[(I_{1}\cup I_{2})\setminus S_{1}\right]\cup S_{2}$ for some $S_{1}, S_{2}\in \mathcal{S}, I_{1}\in \mathcal{I}_{1}$ and $I_{2}\in \mathcal{I}_{2}$. We can write  $A= \left[(I_{1}\cup I_{2})\cap S_{1}^{c}\right]\cup S_{2} = (I_{1}\cap S_{1}^{c})\cup (I_{2}\cap S_{1}^{c})\cup S_{2}= (I_{1}\setminus S_{1})\cup(I_{2}\setminus S_{1}) \cup S_{2}$. Observe that  $I_{1}\setminus S_{1} \in \mathcal{I}_{1}$ and  $I_{2}\setminus S_{2}
\in \mathcal{I}_{2}$. Put $K= I_{1}\setminus S_{1}$ and $L= I_{2}\setminus S_{2}$. This implies that  $A= (K\cup L)\cup S_{2} =(K\cup S_2)\cup (L\cup S_2)=[(K\setminus S_2)\cup S_2]\cup [(L\setminus S_2)\cup S_2]$ and hence $A\in 
 (\mathcal{I}_{1}* \mathcal{S}) \vee (\mathcal{I}_{2}* \mathcal{S})$.

Assume that $A\in  (\mathcal{I}_{1}* \mathcal{S}) \vee (\mathcal{I}_{2}* \mathcal{S})$. Then  $A= \left[( I_{1}\setminus S_{1})\cup S_{2}\right]\cup \left[ (I_{2}\setminus S_{3})\cup S_{4}\right]$, where $I_{1}\in \mathcal{I}_{1}, S_{1},S_{2}, S_{3},S_{4} \in \mathcal{S}$ and $I_{2}\in \mathcal{I}_{2}$. Note that $A= \left[(I_{1}\setminus S_{1})\cup (I_{2}\setminus S_{3})\right]\cup (S_{2}\cup S_{4})$. Since  $I_{1}\setminus S_{1}\in \mathcal{I}_{1}$ and $I_{2}\setminus S_{3}\in \mathcal{I}_{2}$, it follows that the set  $I=(I_{1}\setminus S_{1})\cup (I_{2}\setminus S_{3})$ is an element of $\mathcal{I}_1\vee \mathcal{I}_2$. Since $\mathcal{S}$ is a semigroup of sets then the set  $S=S_2\cup S_4 $ is an element of $\mathcal{S}$. It follows that $A= I\cup S= (I\setminus S)\cup S$ and hence $A\in  (\mathcal{I}_{1}\vee \mathcal{I}_{2})* \mathcal{S}$.
\end{enumerate}
\end{proof}

\begin{corollary}\label{Lebesgue215}
Let $\mathcal{I}_1, \mathcal{I}_2,\cdots, \mathcal{I}_n$ be a finite collection of ideals of sets on $X$. If $\mathcal{S}$ is a semigroup of sets on $X$, then the following inclusion and equality always hold: 
\begin{enumerate}[(i).]
	\item $\bigvee_{i=1}^{n}(\mathcal{S}*\mathcal{I}_{i}) \subseteq \mathcal{S}* (\bigvee_{i=1}^{n}\mathcal{I}_{i})$.
		\item $(\bigvee_{i=1}^{n}\mathcal{I}_{i})*\mathcal{S}=\bigvee_{i=1}^{n}(\mathcal{I}_{i}*\mathcal{S})$.
\end{enumerate}
\end{corollary}

\subsection{Lebesgue measurability and the Baire property} 
Recall that the \emph{Lebesgue outer measure} of a set $E \subseteq \mathbb{R}$, denoted by $\mu^{*}(E)$, is meant the number  $\mu^{*}(E)=\inf\left\{\sum_{n=1}^{\infty} \ell(I_n):  E\subseteq \bigcup_{n=1}^{\infty}I_n\right\}$,  where $\inf $ is taken over all sequences $\{I_n\}_{n=1}^{\infty}$ consisting of open intervals covering the set $E$. The Lebesgue outer measure is defined for all subsets of $\mathbb{R}$, but it is not countably additive.  

A subset $E$ of $\mathbb{R}$ is said to be \emph{Lebesgue measurable}, if for each $A\subseteq \mathbb{R}$, the equality $\mu^{*}(A)=\mu^{*}(A\cap E)+\mu^{*}(A\cap E^c)$ is satisfied. If $E$ is a Lebesgue measurable set, then the Lebesgue measure of $E$ is its outer measure, and it is denoted by $\mu(E)$. Let $\mathcal{N}_0$ be the collection of all subsets of $\mathbb{R}$ having the Lebesgue measure zero (null subsets of $\mathbb{R}$).  It is well known that the family
 $\mathcal{N}_0$ is a $\sigma$-ideal of sets on $\mathbb{R}$. The family $\mathcal{L}(\mathbb{R})$ of all Lebesgue measurable sets on $\mathbb{R}$ is a $\sigma$-algebra of sets on $\mathbb{R}$, containing the collection $\mathcal{B}_O(\mathbb{R})$ of all Borel sets on $\mathbb{R}$ as well as  the collection $\mathcal{N}_0$.  Let us note that there exist subsets of $\mathbb{R}$ which are not measurable in the Lebesgue sense \cite{Vi}, \cite{Kh1}. Hence the domain $\dom(\mu)$ of the set function $\mu$ is not equal to  $\mathcal{P}(\mathbb{R})$. As a consequence, the complement $\mathcal{L}^c(\mathbb{R})=\mathcal{P}(\mathbb{R})\setminus \mathcal{L}(\mathbb{R})$ of $\mathcal{L}(\mathbb{R})$ in $\mathcal{P}(\mathbb{R})$ is not empty. 
 
Recall \cite{Mo} that, if $\mathcal{O}(\mathbb{R})\subseteq \mathcal{P}(\mathbb{R})$ and $\Psi(\mathbb{R})$ is a group of homeomorphisms of $\mathbb{R}$ onto itself, then the family $\mathcal{O}(\mathbb{R})$ is said to be  \emph{invariant} under the action of $\Psi(\mathbb{R})$, if for each $A\in \mathcal{O}(\mathbb{R})$ and for each $h\in \Psi(\mathbb{R})$, we have $h(A)\in \mathcal{O}(\mathbb{R})$. 

The collection $\mathcal{L}(\mathbb{R})$ is invariant under the action of the group $\Phi(\mathbb{R})$ of all translations of $\mathbb{R}$; i.e., if $E\in \mathcal{L}(\mathbb{R})$ and $t\in \mathbb{R}$, then $E+t:=\{e+t: e\in E\}\in \mathcal{L}(\mathbb{R})$. Furthermore, $\mu(E+t)=\mu(E)$, and if $E\in \mathcal{L}(\mathbb{R}), 
t\in \mathbb{R}$, then $tE:=\{te: e\in E\}\in \mathcal{L}(\mathbb{R})$ and $\mu(tE)=|t|\mu(E)$, where $|t|$ is the absolute value of $t$. Hence, the families $\mathcal{L}(\mathbb{R})$ and $\mathcal{L}^c(\mathbb{R})$ are invariant under the action of the group $\Pi(\mathbb{R})$ for which elements are of the form $h(x)=ax+b$ with $a,b\in \mathbb{R}$ and $a\neq 0$.

\begin{lemma}[\cite{ME}]\label{Lebesgue216}
Let $A$ and $B$ be subsets of $\mathbb{R}$. If $A\in \mathcal{L}(\mathbb{R})$ and $\mu(A\Delta B)=0$, then $B\in \mathcal{L}(\mathbb{R})$ and $\mu(A)=\mu(B)$.
\end{lemma}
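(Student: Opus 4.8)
The plan is to exploit completeness of the Lebesgue measure together with the fact that $\mathcal{N}_0$ is an ideal of sets: the null set $A\Delta B$ dominates both one-sided differences $A\setminus B$ and $B\setminus A$, and these differences are all that separates $A$ from $B$. So the whole argument is a short bookkeeping exercise with measurable sets of measure zero.

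First I would read the hypothesis $\mu(A\Delta B)=0$ as $\mu^{*}(A\Delta B)=0$, so that $A\Delta B\in\mathcal{N}_0$; this is legitimate because any set of outer measure zero is Lebesgue measurable and $\mathcal{N}_0\subseteq\mathcal{L}(\mathbb{R})$. Since $\mathcal{N}_0$ is an ideal of sets and $A\setminus B\subseteq A\Delta B$, $B\setminus A\subseteq A\Delta B$, it follows that $A\setminus B\in\mathcal{N}_0$ and $B\setminus A\in\mathcal{N}_0$; in particular both are Lebesgue measurable of measure zero. Then I would write $B=(A\cap B)\cup(B\setminus A)$, where $A\cap B=A\setminus(A\setminus B)$ is the set-difference of the measurable set $A$ and the measurable (null) set $A\setminus B$, hence measurable; as $\mathcal{L}(\mathbb{R})$ is a $\sigma$-algebra, $B$ is then measurable, being a finite union of elements of $\mathcal{L}(\mathbb{R})$.

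For the equality of measures, I would use additivity of $\mu$ on the disjoint measurable decompositions $B=(A\cap B)\cup(B\setminus A)$ and $A=(A\cap B)\cup(A\setminus B)$, giving $\mu(B)=\mu(A\cap B)+0$ and $\mu(A)=\mu(A\cap B)+0$, whence $\mu(A)=\mu(B)$. Alternatively, once $B$ is known measurable, monotonicity and subadditivity of $\mu$ yield $\mu(B)\le\mu(A)+\mu(B\setminus A)=\mu(A)$ and, symmetrically, $\mu(A)\le\mu(B)$. There is no genuine obstacle in this argument; the one point that must not be skipped is the completeness of Lebesgue measure, which is what makes ``$\mu(A\Delta B)$'' meaningful for the a priori non-measurable set $A\Delta B$ and lets the ideal property of $\mathcal{N}_0$ carry the proof.
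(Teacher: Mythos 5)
Your proof is correct and is the standard argument: completeness of Lebesgue measure makes $A\setminus B$ and $B\setminus A$ null, the decomposition $B=(A\cap B)\cup(B\setminus A)$ with $A\cap B=A\setminus(A\setminus B)$ gives measurability, and additivity on the two disjoint decompositions gives $\mu(A)=\mu(B)$. The paper itself states this lemma as a citation to \cite{ME} and supplies no proof, so there is nothing to compare against; your argument is exactly the expected one, and you are right to flag completeness as the only point that genuinely matters.
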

If $A$ a subset of $\mathbb{R}$, then $\inte (A)$ and $\cl(A)$ are used to denote the interior and the closure of $A$ in $\mathbb{R}$, respectively. A subset $M$ of $\mathbb{R}$ is said to be \emph{meager} (or \emph{of first category}), if it can be represented as a countable union of nowhere dense; i.e., $M=\bigcup_{i=1}^{\infty} M_i$ with $\inte \cl(M_i)=\emptyset$ for each $i=1,2, \cdots$.  It is well known that the collection $\mathcal{I}_m$ of all first category subsets of $\mathbb{R}$ is a $\sigma$-ideal of sets on $\mathbb{R}$ and that $\mathcal{I}_f \subsetneq \mathcal{I}_c\subsetneq \mathcal{I}_m$ and $\mathcal{I}_f \subsetneq \mathcal{I}_c\subsetneq \mathcal{N}_0$. 

A subset $A$ of $\mathbb{R}$ is said to have the \emph{Baire property} in $\mathbb{R}$ if $A$ can be represented as  $A=O\Delta M$, where $O$ is open in $\mathbb{R}$, $M$ is a first category set on $\mathbb{R}$, and $\Delta$ is the usual operation of standard symmetric difference of sets \cite{Ox}, \cite{KK}. The family $\mathcal{B}_P(\mathbb{R})$ of sets having the Baire property in $\mathbb{R}$ is a $\sigma$-algebra, containing the collections $\mathcal{B}_O(\mathbb{R})$ and $\mathcal{I}_m$, and it is invariant under the action of the group $\mathcal{H}(\mathbb{R})$ of all homeomorphisms of $\mathbb{R}$ onto itself. The Lebesgue measurability and the Baire property are two important classical concepts in real analysis and topology.

\subsection{Vitali selectors in the additive topological group of real numbers}\label{Eliyezeri}
The Vitali selectors of $\mathbb{R}$ constitute an example of subsets of $\mathbb{R}$, which are not Lebesgue measurable and without the Baire property in $\mathbb{R}$. To define Vitali selectors,  we follow \cite{Kh1} and \cite{Vi}, and we emphasize that their existence is granted by the Axiom of Choice.

Consider a countable dense subgroup $Q$ of the additive topological group $(\mathbb{R},+)$ of real numbers. Define a relation  $\mathfrak{R}$ on $\mathbb{R}$ as follows:  for $x,y\in \mathbb{R}$, let $x \mathfrak{R}y$ if and only if $x-y\in {Q}$. Clearly, $\mathfrak{R}$ is an equivalence relation on $\mathbb{R}$, and hence it divides $\mathbb{R}$ into equivalence classes. Let $\mathbb{R}/{Q}=\{E_{\alpha}(Q): \alpha \in I\}$ be the collection of all equivalence classes, where $I$ is some indexing set.  Accordingly, we have the following decomposition of $\mathbb{R}$:
\begin{equation}\label{Fourier1}
\mathbb{R}=\bigcup \{ E_\alpha (Q): \alpha \in I\}.
\end{equation}
 It follows from the definition of $\mathfrak{R}$ that the set $\mathbb{R}/{Q}$ consists of disjoint translated copies of $Q$ by elements of $\mathbb{R}$. Namely, if $t\in E_\alpha (Q)$ and $E_\alpha(Q) \in \mathbb{R}/Q$, then $E_\alpha(Q)= Q+t=\{q+t: q\in Q\}$. Hence each equivalence class $E_\alpha (Q)$ is a countable dense subset of  $\mathbb{R}$.  Equality~\eqref{Fourier1}  implies that $\card (I)= \mathfrak{c}$, where $\mathfrak{c}$ is the continuum.
\begin{example}\label{Lebesgue217}
The set $\mathbb{Q}$ of rational numbers, the set $\mathbb{D}=\{a+b\sqrt{2}: a, b\in 2\mathbb{N}\}$, the set $\mathbb{Q}(\gamma)=\{a+b \gamma: a,b \in \mathbb{Q}\}$ for each irrational number $\gamma$, and the set $\sqrt{2}\mathbb{Q}=\{\sqrt{2}q: q\in \mathbb{Q}\}$, are some examples of countable dense subgroups of $(\mathbb{R},+)$. 
\end{example}

\begin{definition}[\cite{Vi}, \cite{Kh1}]\label{Lebesgue218}
A \emph{{Vitali selector}} of $\mathbb{R}$ related to $Q$ is any subset $V$ of $\mathbb{R}$ containing one element for each equivalence class; i.e. any subset $V$ of $\mathbb{R}$ for which $\card (V\cap E_\alpha(Q))=1$ for each $\alpha \in I$.  A Vitali selector is called a 
\emph{Vitali set}, whenever the subgroup ${Q}$ coincides with  the additive group $\mathbb{Q}$ of rational numbers.
\end{definition}

\begin{proposition}[\cite{Vi}, \cite{Kh1}]\label{Lebesgue219}
Let $Q$ be a countable dense subgroup of $(\mathbb{R},+)$ and let $V$ be a Vitali selector related to $Q$.  Then, the following statements hold:
\begin{enumerate}[(i)]
\item If $q_1,q_2\in Q$ and $q_1\neq q_2$, then $(V+q_1)\cap (V+q_2)=\emptyset$.
\item Any two sets in the collection $\{V+q: q\in Q\}$ are homeomorphic, and 
\begin{equation}\label{Fourier2}
\mathbb{R}=\bigcup \{V+q: q\in Q\}.
\end{equation}
\item The set $V$ is not of the first category in $\mathbb{R}$, and it is not a null set.
\end{enumerate}
\end{proposition}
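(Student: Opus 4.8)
The plan is to handle the three items in turn, using throughout the decomposition~\eqref{Fourier1} of $\mathbb{R}$ into the $Q$-equivalence classes and the fact that for each $q\in Q$ the translation $\tau_q\colon x\mapsto x+q$ is a homeomorphism of $\mathbb{R}$ onto itself with $\tau_q^{-1}=\tau_{-q}$.

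For (i) I would argue by contradiction. If $(V+q_1)\cap(V+q_2)\neq\emptyset$, choose $v_1,v_2\in V$ with $v_1+q_1=v_2+q_2$; then $v_1-v_2=q_2-q_1$, which lies in $Q$ because $Q$ is a subgroup, so $v_1$ and $v_2$ belong to the same class $E_\alpha(Q)$. Since $\card(V\cap E_\alpha(Q))=1$ this forces $v_1=v_2$, hence $q_1=q_2$, contradicting $q_1\neq q_2$.

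For (ii), the homeomorphy of any two members of $\{V+q:q\in Q\}$ is immediate from $V+q_1=\tau_{q_1-q_2}(V+q_2)$. For the equality in~\eqref{Fourier2}, the inclusion $\supseteq$ is clear; for $\subseteq$, given $x\in\mathbb{R}$ let $E_\alpha(Q)$ be its equivalence class and $v$ the unique element of $V\cap E_\alpha(Q)$, so that $x-v\in Q$; writing $q=x-v$ gives $x=v+q\in V+q$.

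For (iii) I would once more argue by contradiction, this time crucially using that $Q$ is countable. If $V$ were of the first category in $\mathbb{R}$, then so would be each $V+q$, since the $\sigma$-ideal $\mathcal{I}_m$ is invariant under homeomorphisms; hence by~\eqref{Fourier2} the space $\mathbb{R}=\bigcup_{q\in Q}(V+q)$ would be a countable union of meager sets, and thus meager in itself, contradicting the Baire category theorem. Similarly, if $\mu^{*}(V)=0$, then, using translation invariance and countable subadditivity of the Lebesgue outer measure together with~\eqref{Fourier2}, we would get $\mu^{*}(\mathbb{R})\leq\sum_{q\in Q}\mu^{*}(V+q)=\sum_{q\in Q}\mu^{*}(V)=0$, which is absurd. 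The only step that is more than bookkeeping is the appeal to completeness of $\mathbb{R}$ (the Baire category theorem) guaranteeing that $\mathbb{R}$ is not meager in itself; everything else is routine.
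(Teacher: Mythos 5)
Your proof is correct; note that the paper itself gives no proof of this proposition, citing it from Vitali and Kharazishvili, and your argument is precisely the classical one those sources use: disjointness from the selector property, homeomorphy and the covering of $\mathbb{R}$ from the coset decomposition, and non-meagerness/non-nullity from the Baire category theorem and countable subadditivity applied to the countable cover $\mathbb{R}=\bigcup\{V+q: q\in Q\}$. All steps check out, including the key uses of the countability of $Q$ and the uniqueness of $V\cap E_\alpha(Q)$.
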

The following theorem shows that the collection of all Vitali selectors of $\mathbb{R}$ is invariant under the action of the group $\Phi (\mathbb{R})$ of all translations of $\mathbb{R}$.

\begin{theorem}[\cite{CN2}, \cite{VH}]\label{Lebesgue220}
If $U=\bigcup_{i=1}^n V_i$, where each $V_i$ is a Vitali selector related to $Q_i$ and $t\in \mathbb{R}$, then the set $U+t:=\left(\bigcup_{i=1}^n V_i\right)+t=\bigcup_{i=1}^n (V_i+t)$ is a union of Vitali selectors, where each $V_i+t$ is related to $Q_i$, for $i=1,2, \cdots,n$.
\end{theorem}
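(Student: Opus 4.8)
The plan is to reduce the statement to the single–selector case and then exploit the fact that translation permutes the $Q$–equivalence classes. First I would fix one Vitali selector $V$ related to a countable dense subgroup $Q$ of $(\mathbb{R},+)$ and a real number $t$, and show that $V+t$ is again a Vitali selector related to $Q$. The key observation is that translation by $t$ is a bijection of $\mathbb{R}$ onto itself that carries each equivalence class $E_\alpha(Q)$ to another equivalence class: since $E_\alpha(Q)=Q+s$ for some $s\in E_\alpha(Q)$ (as recalled after Equality~\eqref{Fourier1}), we have $E_\alpha(Q)+t=Q+(s+t)$, which is precisely the class of $s+t$. Thus the map $h_t\colon x\mapsto x+t$ induces a permutation $\sigma_t$ of the quotient $\mathbb{R}/Q$, with inverse induced by $h_{-t}$.

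Next I would compute, for each $\alpha\in I$,
\[
(V+t)\cap E_\alpha(Q)=\bigl(V\cap (E_\alpha(Q)-t)\bigr)+t .
\]
Since $E_\alpha(Q)-t$ equals $\sigma_{-t}(E_\alpha(Q))$, which is again some equivalence class $E_\beta(Q)$, and since $V$ is a Vitali selector related to $Q$, we have $\card\bigl(V\cap E_\beta(Q)\bigr)=1$; translation by $t$ being injective, it follows that $\card\bigl((V+t)\cap E_\alpha(Q)\bigr)=1$. As $\alpha$ was arbitrary, $V+t$ contains exactly one representative of each class, i.e. it is a Vitali selector related to $Q$ in the sense of Definition~\ref{Lebesgue218}.

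Finally, for $U=\bigcup_{i=1}^n V_i$ with each $V_i$ a Vitali selector related to $Q_i$, the distributivity $U+t=\bigl(\bigcup_{i=1}^n V_i\bigr)+t=\bigcup_{i=1}^n(V_i+t)$ is immediate from the definition of the translate of a set, and by the previous paragraph each $V_i+t$ is a Vitali selector related to $Q_i$. This yields exactly the asserted conclusion.

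I do not expect a serious obstacle here; the only point requiring care is the verification that translation sends equivalence classes to equivalence classes, which uses that $Q$ is a subgroup (so that $Q+s$ is a coset and $(Q+s)+t=Q+(s+t)$), together with the bookkeeping that $\sigma_t$ is a genuine bijection of $\mathbb{R}/Q$, so that the property "exactly one point per class" is transported from $V$ to $V+t$. Everything else is a routine use of the injectivity and set-theoretic compatibility of translations with unions and intersections.
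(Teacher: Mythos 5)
Your argument is correct: the identity $(V+t)\cap E_\alpha(Q)=\bigl(V\cap(E_\alpha(Q)-t)\bigr)+t$ together with the fact that translation by $t$ permutes the cosets of $Q$ (since $(Q+s)+t=Q+(s+t)$) does show that $V+t$ meets every class in exactly one point, and the union case is then immediate. The paper itself states this theorem without proof, citing \cite{CN2} and \cite{VH}; your reduction to the single-selector case and the coset-permutation argument is precisely the standard proof given in those references, so there is nothing to flag.
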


It is possible to define bounded and unbounded Vitali selectors of $\mathbb{R}$. If $O$ is a non-empty open set of $\mathbb{R}$, then one can define Vitali selectors which are dense in $O$. This implies, in particular, that there exist Vitali selectors which are dense in $\mathbb{R}$. We further point out that there exists a Vitali selector which contains a perfect set \cite{Mo}, and it can be easily observed that for any Vitali selector $V$ of $\mathbb{R}$ the set $\mathbb{R}\setminus V$ is dense in $\mathbb{R}$.

\begin{theorem}[\cite{Kh1}, \cite{Vi}]\label{Lebesgue221}
Any Vitali selector $V$ of $\mathbb{R}$ is not measurable in the Lebesgue sense and does not have the Baire property in $\mathbb{R}$.
\end{theorem}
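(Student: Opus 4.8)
The plan is to run the classical Vitali argument in both the measure-theoretic and the category settings. A Vitali selector $V$ is, by Definition~\ref{Lebesgue218}, related to some countable dense subgroup $Q$ of $(\mathbb{R},+)$, and by Proposition~\ref{Lebesgue219} the $Q$-translates $\{V+q:q\in Q\}$ are pairwise disjoint, their union is $\mathbb{R}$, and $V$ is neither null nor of the first category. These facts, together with the translation invariance of Lebesgue measure and the homeomorphism invariance of $\mathcal{B}_P(\mathbb{R})$ recorded in Subsection~2.2, are all that is needed.

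First I would prove $V\notin\mathcal{L}(\mathbb{R})$. Suppose otherwise. Then each $V+q$ is measurable with $\mu(V+q)=\mu(V)$. Since $V$ is not null, $\mu(V)>0$, so by countable additivity applied to $V=\bigcup_{n\ge1}\big(V\cap[-n,n]\big)$ there is an $n$ with $\mu(W)>0$, where $W:=V\cap[-n,n]$. The sets $W+q$ ($q\in Q$) are pairwise disjoint, being contained in the pairwise disjoint sets $V+q$, and $W+q\subseteq[-n,n+1]$ whenever $q\in Q\cap[0,1]$. As $Q$ is dense, $Q\cap[0,1]$ is countably infinite, so by monotonicity and countable additivity
\[
2n+1=\mu([-n,n+1])\ \ge\ \mu\Big(\bigcup_{q\in Q\cap[0,1]}(W+q)\Big)=\sum_{q\in Q\cap[0,1]}\mu(W+q)=\sum_{q\in Q\cap[0,1]}\mu(W)=+\infty,
\]
a contradiction.

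Next I would prove $V\notin\mathcal{B}_P(\mathbb{R})$. Suppose $V=O\,\Delta\,M$ with $O$ open and $M$ meager. Each $V+q$ then has the Baire property. If $O=\emptyset$ then $V=M$ is meager, so $\mathbb{R}=\bigcup_{q\in Q}(V+q)$ is a countable union of meager sets, contradicting the Baire category theorem; hence $O\neq\emptyset$. Fix $x\in O$ and $\varepsilon>0$ with $(x-\varepsilon,x+\varepsilon)\subseteq O$, and use the density of $Q$ to pick $q\in Q$ with $0<|q|<\varepsilon$; then $x-q\in O$, so $x\in O\cap(O+q)$, and since this set is open it contains a non-empty open interval $J$. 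From $V=O\,\Delta\,M$ one gets $O\setminus V=O\cap M\subseteq M$, so $J\setminus V$ is meager; likewise $(O+q)\setminus(V+q)\subseteq M+q$ is meager and hence so is $J\setminus(V+q)$. Therefore $J\setminus\big(V\cap(V+q)\big)=(J\setminus V)\cup\big(J\setminus(V+q)\big)$ is meager, and since $J$ is a non-empty open interval (hence not meager), the set $V\cap(V+q)$ must be non-empty. But $q\in Q\setminus\{0\}$ forces $V\cap(V+q)=\emptyset$ by Proposition~\ref{Lebesgue219}(i) — a contradiction.

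The bookkeeping (translation invariance of $\mu$, homeomorphism invariance of $\mathcal{B}_P(\mathbb{R})$, the $\sigma$-ideal properties of $\mathcal{N}_0$ and $\mathcal{I}_m$, the Baire category theorem for $\mathbb{R}$) is routine. The one point requiring genuine care is that a Vitali selector need not be bounded: in the measure argument one must first pass to a bounded piece $W$ of positive measure before countable additivity over a bounded interval yields the contradiction, and in the category argument one must invoke the density of $Q$ to guarantee that $O$ and a nontrivial translate $O+q$ overlap in a set containing an open interval.
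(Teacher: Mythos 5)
Your proof is correct, and it is essentially the classical argument of Vitali (for measure) and Oxtoby/Kharazishvili (for category) that the paper itself does not reproduce but merely cites from \cite{Kh1} and \cite{Vi}: the paper states Theorem \ref{Lebesgue221} as a known result with no proof given. Both halves of your argument are sound, including the two points you flag as needing care (passing to a bounded piece $W$ of positive measure before summing over $Q\cap[0,1]$, and using density of $Q$ to make $O\cap(O+q)$ contain an interval so that $V\cap(V+q)\neq\emptyset$ contradicts Proposition \ref{Lebesgue219}(i)).
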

It follows from Theorem \ref{Lebesgue221} and Proposition \ref{Lebesgue219} that if $V$ be a Vitali selector of $\mathbb{R}$, then every Lebesgue measurable subset of $V$ has the Lebesgue measure zero, and every subset of $V$ with the Baire property is of the first category.

The next theorem is a more general result on the Baire property than Theorem \ref{Lebesgue221}. 

\begin{theorem}[\cite{Ch}]\label{Lebesgue222}
Let $V_i$ be a Vitali selector for each $i\leq n$, where $n$ is some integer such that $n\geq 1$. Then the set $U=\bigcup_{i=1}^n V_i$ does not contain the difference $O\setminus M$, where $O$ is a non-empty open set and $M$ is a meager. In particular, the set $U$ does not possess the Baire property in $\mathbb{R}$.
\end{theorem}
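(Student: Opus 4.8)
The plan is to argue by contradiction, reducing the statement about a finite union of Vitali sets to the non-Lebesgue-measurability (or rather the more delicate Baire-property failure) already recorded in Theorem \ref{Lebesgue221}, using the covering identity \eqref{Fourier2} together with a Baire-category argument. Suppose, for contradiction, that $U=\bigcup_{i=1}^n V_i$ contains $O\setminus M$ for some non-empty open $O$ and some meager $M$. Each $V_i$ is a Vitali set, so it is a selector for the $\mathbb{Q}$-coset relation; in particular \eqref{Fourier2} gives $\mathbb{R}=\bigcup_{q\in\mathbb{Q}}(V_i+q)$ for each $i$, and by Proposition \ref{Lebesgue219}(i) the translates $\{V_i+q: q\in\mathbb{Q}\}$ are pairwise disjoint for fixed $i$. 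I would first shrink $O$ to a bounded open interval, so that $O\setminus M$ is still non-meager (by the Baire Category Theorem, since $M$ is meager and $O$ is a non-empty open set, $O\setminus M$ is non-meager and in fact dense in $O$).

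The key step is a translation-and-intersection trick. Fix an enumeration $\mathbb{Q}=\{q_1,q_2,\dots\}$ and consider, for each $k$, the translate $(O\setminus M)+q_k \subseteq U+q_k=\bigcup_{i=1}^n (V_i+q_k)$. Since translation by $q_k$ is a homeomorphism, it preserves open sets and meager sets, so $(O\setminus M)+q_k$ contains $O'\setminus M_k$ with $O'$ open and $M_k$ meager; intersecting over a suitable finite (or countable, then extracting finite) family of rationals $q_{k}$ chosen so that the translated copies $O+q_k$ have common intersection a fixed non-empty open set $W$, one obtains that $W\setminus M^*$ (with $M^*=\bigcup_k M_k$ meager) is simultaneously covered by $\bigcup_{i=1}^n(V_i+q_{k})$ for each of the chosen $q_{k}$. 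Now take $N+1$ distinct rationals $q_{k_0},\dots,q_{k_N}$ where $N$ will be chosen large compared to $n$ (in fact $N=n$ suffices): for a point $x\in W\setminus M^*$, for each $j\in\{0,\dots,N\}$ there is $i(j)\in\{1,\dots,n\}$ with $x\in V_{i(j)}+q_{k_j}$, i.e. $x-q_{k_j}\in V_{i(j)}$. By pigeonhole, since $N+1>n$, two indices $j\neq j'$ give the same $i(j)=i(j')=i$, so $x-q_{k_j}$ and $x-q_{k_{j'}}$ are two \emph{distinct} elements of $V_i$ that are $\mathbb{Q}$-congruent — contradicting that $V_i$ is a selector (meets each coset in exactly one point). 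The only gap is ensuring $W\setminus M^*$ is non-empty, which again follows from the Baire Category Theorem once $W$ is a non-empty open set and $M^*$ is a countable union of meager sets, hence meager.

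The main obstacle I anticipate is the bookkeeping in the second step: arranging that finitely many rational translates of $O\setminus M$ have a \emph{common} non-meager residual on a fixed open set $W$. This needs a little care because translating $O$ by large rationals moves it off itself; the fix is to use the density of $\mathbb{Q}$ to pick the rationals $q_{k_0},\dots,q_{k_n}$ all within a small interval (so $\bigcap_j (O+q_{k_j})$ contains a non-empty open $W$) while keeping them pairwise distinct — this is possible precisely because $\mathbb{Q}$ is dense. Once the rationals are pinned down, the pigeonhole argument is immediate and the contradiction with the selector property of some $V_i$ closes the proof. Finally, the ``in particular'' clause is automatic: if $U$ possessed the Baire property, we could write $U=O_0\Delta M_0$ with $O_0$ open and $M_0$ meager; since $U$ is dense (each $V_i$, being a selector, is dense, by the remark following Proposition \ref{Lebesgue219} and the discussion of coset density), $O_0$ is a non-empty open set, and then $O_0\setminus M_0\subseteq U$ up to a meager set, contradicting what was just proved — so $U$ cannot have the Baire property.
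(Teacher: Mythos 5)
The paper does not actually prove Theorem \ref{Lebesgue222}; it is quoted from \cite{Ch} without proof, so your attempt can only be judged on its own merits. Your core argument is correct and complete: translating the hypothesized inclusion $O\setminus M\subseteq U$ by $n+1$ distinct rationals chosen close enough together that the translates of $O$ share a non-empty open set $W$, discarding the meager set $M^{*}=\bigcup_{j}(M+q_{k_j})$, and applying the pigeonhole principle to a surviving point $x\in W\setminus M^{*}$ produces two distinct, $\mathbb{Q}$-congruent points of a single $V_i$, contradicting Proposition \ref{Lebesgue219}(i) (equivalently, the selector property). Every step you need --- non-emptiness of $W\setminus M^{*}$ via the Baire Category Theorem, the disjointness of distinct rational translates of a fixed $V_i$, and the existence of $n+1$ distinct rationals in an arbitrarily small interval --- is correctly justified. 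This is the standard route to the result.

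The one genuine flaw is in your derivation of the ``in particular'' clause. You assert that each $V_i$, being a selector, is dense; this is false in general --- the paper itself notes that one can construct bounded Vitali selectors, and a Vitali set contained in $[0,1]$ is not dense in $\mathbb{R}$. Moreover, even if density held it would not give you what you need: a dense set can be meager (e.g.\ $\mathbb{Q}$), so density of $U$ does not force $O_0\neq\emptyset$ in a representation $U=O_0\Delta M_0$. What you actually need is that $U$ is not of the first category, which holds because $U\supseteq V_1$ and $V_1$ is non-meager by Proposition \ref{Lebesgue219}(iii). Then $O_0=\emptyset$ would make $U=M_0$ meager, a contradiction; and with $O_0\neq\emptyset$ one has $O_0\setminus M_0\subseteq O_0\Delta M_0=U$ exactly (not merely up to a meager set), contradicting the first part of the theorem. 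With this one-line repair the proof is complete.
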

A similar result to Theorem \ref{Lebesgue222} about non-Lebesgue measurability of finite unions of Vitali selectors of $\mathbb{R}$  was proved by A.B. Kharazishvili.

\begin{theorem}[\cite{Kh2}]\label{Lebesgue223}
If $\{V_\alpha: 1\leq \alpha \leq m\}$ is a non-empty finite family of Vitali selectors of $\mathbb{R}$, then the union $\bigcup \{V_\alpha: 1\leq \alpha \leq m\}$ is not measurable in the Lebesgue sense.
\end{theorem}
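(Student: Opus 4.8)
The plan is to argue by contradiction, exploiting the fact that a union of $m$ Vitali sets meets every $\mathbb{Q}$-equivalence class in at most $m$ points, so that its rational translates cover $\mathbb{R}$ with uniformly bounded multiplicity, which substitutes for the pairwise disjointness one uses for a single Vitali set.

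Suppose $U=\bigcup_{\alpha=1}^{m}V_\alpha$ is Lebesgue measurable. First I would isolate two facts. Since $V_1\subseteq U$ and, by Proposition \ref{Lebesgue219}(iii), $V_1$ is not a null set, monotonicity of the outer measure gives $\mu(U)=\mu^{*}(U)\ge\mu^{*}(V_1)>0$; then, applying continuity of Lebesgue measure from below to the increasing sequence $U\cap[-n,n]$, there is a bounded interval $J=[a,b]$ with $\delta:=\mu(U\cap J)>0$. Second, for $x\in\mathbb{R}$ let $E_x=x+\mathbb{Q}$ be the $\mathbb{Q}$-equivalence class of $x$; since each $V_\alpha$ is a Vitali selector, $V_\alpha\cap E_x$ is a singleton, so $\card(U\cap E_x)=\card\bigl(\bigcup_{\alpha=1}^{m}(V_\alpha\cap E_x)\bigr)\le m$.

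Now set $A=[a-1,b+1]$ and $S=\mathbb{Q}\cap[-1,1]$, an infinite set. For $q\in S$ the translate $U+q$ is measurable and $J+q\subseteq A$, so $(U\cap J)+q\subseteq(U+q)\cap A$ and therefore $\mu\bigl((U+q)\cap A\bigr)\ge\delta$. On the other hand, for each $x$ the injective map $q\mapsto x-q$ carries $\{q\in S:x\in U+q\}$ into $U\cap E_x$, so $\sum_{q\in S}\mathbbm{1}_{U+q}(x)\le\card(U\cap E_x)\le m$, and a fortiori $\sum_{q\in S}\mathbbm{1}_{(U+q)\cap A}\le m$ on $A$. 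Integrating this inequality over the finite-measure set $A$ and interchanging the integral with the countable sum of non-negative measurable functions yields $\sum_{q\in S}\mu\bigl((U+q)\cap A\bigr)\le m\,\mu(A)<\infty$. Since $S$ is infinite and each summand is at least $\delta>0$, the left-hand side is infinite, a contradiction. Hence $U$ is not Lebesgue measurable.

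The step I expect to be the crux is recognising that the rational translates $U+q$, though far from disjoint, overlap with multiplicity at most $m$: this bounded-multiplicity estimate is the only place the hypothesis ``finite union of $m$ Vitali sets'' enters, and it is exactly what makes the integration argument run. A secondary point requiring care is the passage from ``$V_1$ is not null'' to ``$U$ has a bounded piece of positive measure'', which relies on the measurability of $U$ together with continuity of the measure from below.
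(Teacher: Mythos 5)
Your proof is correct, but it follows a genuinely different route from the one the paper relies on. The paper takes Theorem \ref{Lebesgue223} from Kharazishvili's argument built on the Banach extension theorem (Theorem \ref{Lebesgue224}): one restricts $\mu$ to bounded measurable sets, extends it to a finitely additive translation-invariant functional $\eta$ on all bounded sets, and then uses Lemmas \ref{Lebesgue225} and \ref{Lebesgue226} (a bounded subset of a Vitali selector admits a bounded infinite family of pairwise disjoint translates, forcing $\eta$ to vanish on it) to conclude that a finite union of Vitali selectors contains no set of positive measure; non-nullity of a single selector then gives non-measurability. You instead avoid the Banach functional entirely: your observation that $U$ meets each coset $x+\mathbb{Q}$ in at most $m$ points gives the bounded-multiplicity bound $\sum_{q\in S}\mathbbm{1}_{U+q}\le m$, and integrating over a bounded window produces the contradiction with countably many translates each contributing at least $\delta>0$. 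This is the classical Vitali argument upgraded from ``disjoint translates'' to ``translates of multiplicity $\le m$,'' and it is more elementary, needing only translation invariance of $\mu$ and Tonelli for non-negative sums. What the paper's heavier machinery buys is generality: your multiplicity bound depends on all $V_\alpha$ being selectors for the \emph{same} partition $\mathbb{R}/\mathbb{Q}$ (or at least a common countable dense subgroup), whereas the functional-theoretic argument extends to finite unions of Vitali selectors related to \emph{different} subgroups $Q_i$ (Theorems \ref{Lebesgue41} and \ref{Lebesgue45}), where cosets of one group may meet the other selectors in infinitely many points. For the statement as given, which concerns Vitali sets (all related to $\mathbb{Q}$), your argument is complete.
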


Below, we recall the classical Banach Theorem, and two important  lemmas, which were used in the proof of Theorem \ref{Lebesgue223}, and they will be very useful in the sequel. For, first recall that $\mathcal{B}_b (\mathbb{R})$ is denoting the family of all bounded subsets of $\mathbb{R}$.
 
\begin{theorem}[Banach Theorem \cite{Kh2}]\label{Lebesgue224}
Let $\mathcal{R}$ be a translation invariant ring of subsets of $\mathbb{R}$, satisfying the relations $\mathcal{R} \subseteq \mathcal{B}_b (\mathbb{R})$ and $[0, 1) \in \mathcal{R}$, and let $\vartheta: \mathcal{R}\longrightarrow [0,+\infty)$ be a finitely additive translation invariant functional such that $\vartheta ([0,1))=1$. Then there exists a finitely additive translation invariant functional $\eta: \mathcal{B}_b (\mathbb{R})\longrightarrow [0, +\infty)$ such that $\eta$ is an extension of $\vartheta$.
\end{theorem}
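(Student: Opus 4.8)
The plan is to reduce Theorem~\ref{Lebesgue224} to a Hahn--Banach extension in a space of simple functions and then to repair the loss of translation invariance using the amenability of $(\mathbb{R},+)$. Let $\mathcal{F}$ be the real vector space of all bounded functions $f\colon\mathbb{R}\to\mathbb{R}$ with bounded support, and let $\mathcal{F}_0\subseteq\mathcal{F}$ be the subspace spanned by the indicators $\mathbf{1}_A$, $A\in\mathcal{S}$. Since $\mathcal{S}$ is a ring and $\vartheta$ is a non-negative finitely additive functional, a routine refinement of finitely many members of $\mathcal{S}$ into pairwise disjoint pieces shows that $\mathbf{1}_A\mapsto\vartheta(A)$ determines a well-defined linear functional $L_0\colon\mathcal{F}_0\to\mathbb{R}$ that is monotone (so $L_0(g)\ge0$ whenever $0\le g\in\mathcal{F}_0$). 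Because $[0,1)\in\mathcal{S}$ and $\mathcal{S}$ is translation invariant, every $[n,n+1)$, hence every $[-N,N)$, belongs to $\mathcal{S}$, so each $f\in\mathcal{F}$ satisfies $f\le C\mathbf{1}_{[-N,N)}$ for suitable $C>0$, $N\in\mathbb{N}$.

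Next I would introduce the translation-invariant dominating functional
\[
p(f)=\inf\Bigl\{\,\textstyle\sum_{i=1}^{k}c_i\,\vartheta(A_i)\ :\ k\in\mathbb{N},\ c_i\ge0,\ A_i\in\mathcal{S},\ f\le\textstyle\sum_{i=1}^{k}c_i\mathbf{1}_{A_i}\,\Bigr\},
\]
which is finite by the previous remark. One checks that $p$ is sublinear, that $p(f)\le0$ whenever $f\le0$, that $p\ge L_0$ on $\mathcal{F}_0$ (using monotonicity of $L_0$), and --- the point that makes the whole argument work --- that $p$ is invariant under every translation $\tau_t$, $(\tau_t f)(x)=f(x-t)$, since a domination $f\le\sum c_i\mathbf{1}_{A_i}$ yields $\tau_t f\le\sum c_i\mathbf{1}_{A_i+t}$ with $A_i+t\in\mathcal{S}$ and $\vartheta(A_i+t)=\vartheta(A_i)$. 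By the Hahn--Banach theorem, $L_0$ extends to a linear $L\colon\mathcal{F}\to\mathbb{R}$ with $L\le p$; such an $L$ is automatically monotone and bounded on indicators of bounded sets, and it extends $\vartheta$.

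The obstacle is that the Hahn--Banach extension $L$ need not be translation invariant, and forcing invariance is the crux of the proof. Here I would use that the abelian group $(\mathbb{R},+)$ is amenable: fix a translation-invariant mean $m$ on the space of bounded functions on $\mathbb{R}$. For $f\in\mathcal{F}$ the map $t\mapsto L(\tau_{-t}f)$ is bounded, since its values lie between $-p(-f)$ and $p(f)$, both independent of $t$ by invariance of $p$; hence $\widetilde L(f):=m_t\bigl(L(\tau_{-t}f)\bigr)$ is defined. Linearity of $L$, $\tau$, and $m$ makes $\widetilde L$ linear; $\widetilde L\le p$ since $L(\tau_{-t}f)\le p(f)$ for all $t$; $\widetilde L$ still extends $L_0$ because $t\mapsto L(\tau_{-t}\mathbf{1}_A)=\vartheta(A)$ is constant; and invariance of the mean gives $\widetilde L\circ\tau_s=\widetilde L$ for every $s$. (Alternatively one may skip naming a mean and instead apply the Markov--Kakutani fixed point theorem to the non-empty convex set of all Hahn--Banach extensions $L\le p$ of $L_0$, which is compact in the product topology of $\mathbb{R}^{\mathcal{F}}$ and carries the commuting affine action $L\mapsto L\circ\tau_{-t}$; a common fixed point is the sought invariant extension.)

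Finally, setting $\eta(A)=\widetilde L(\mathbf{1}_A)$ for $A\in\mathcal{B}_b(\mathbb{R})$, monotonicity and boundedness of $\widetilde L$ give $\eta(A)\in[0,+\infty)$; finite additivity follows from $\mathbf{1}_{A\cup B}=\mathbf{1}_A+\mathbf{1}_B$ for disjoint $A,B$ together with linearity of $\widetilde L$; translation invariance follows from $\mathbf{1}_{A+t}=\tau_t\mathbf{1}_A$ and invariance of $\widetilde L$; and $\eta$ extends $\vartheta$ because $\widetilde L$ extends $L_0$. Thus the only genuinely non-elementary ingredient is the amenability of $(\mathbb{R},+)$ used to pass from a Hahn--Banach extension to a translation-invariant one; everything else is bookkeeping with rings of sets and simple functions.
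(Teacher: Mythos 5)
Your argument is correct, but there is nothing in the paper to compare it against: Theorem~\ref{Lebesgue224} is quoted verbatim from Kharazishvili's article \cite{Kh2} as a known classical result and the paper supplies no proof of it. What you have written is the standard proof of Banach's extension theorem: pass from sets to the space of simple functions, dominate by the translation-invariant sublinear functional $p$, apply Hahn--Banach, and then average over the group (equivalently, invoke Markov--Kakutani on the compact convex set of dominated extensions) to restore translation invariance, using that $(\mathbb{R},+)$ is abelian and hence amenable. All the steps you sketch go through: well-definedness of $L_0$ via disjoint refinement in the ring $\mathcal{S}$, finiteness of $p$ from $[-N,N)\in\mathcal{S}$, the inequality $L_0\le p$ on $\mathcal{F}_0$ from monotonicity, boundedness of $t\mapsto L(\tau_{-t}f)$ between $-p(-f)$ and $p(f)$, and the verification that $\eta(A)=\widetilde L(\mathbf{1}_A)$ is non-negative, finitely additive, translation invariant, and extends $\vartheta$. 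The one point worth stating explicitly if you write this up in full is that the affine maps $L\mapsto L\circ\tau_{-t}$ really do preserve the set of dominated extensions of $L_0$, which requires both the invariance of $p$ and the invariance of $L_0$ itself (i.e.\ of $\vartheta$ and $\mathcal{S}$) under translations; you use both facts but only the first is flagged as ``the point that makes the whole argument work.'' In short: the proof is sound and self-contained, whereas the paper's route is simply to cite the result.
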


\begin{lemma}[\cite{Kh2}]\label{Lebesgue225}
Let $\vartheta$ be as in Theorem \ref{Lebesgue224}, and let $X\in \mathcal{B}_b(\mathbb{R})$ have the following property: There exists a bounded infinite sequence $\{h_k: k\in \mathbb{N}\}$ of elements of $\mathbb{R}$ such that the family $\{X+h_k: k\in \mathbb{N}\}$ is disjoint. If $X\in \dom (\vartheta)$, then necessarily $\vartheta (X)=0$.
\end{lemma}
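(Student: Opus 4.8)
The final statement to prove is Lemma~\ref{Lebesgue225} (Kharazishvili's lemma): if $\vartheta$ is a finitely additive, translation-invariant functional on a translation-invariant ring $\mathcal{S}$ of bounded subsets of $\mathbb{R}$ with $\vartheta([0,1))=1$, and if $X \in \dom(\vartheta)$ is bounded and admits a bounded sequence $\{h_k : k \in \mathbb{N}\}$ of reals with $\{X + h_k\}$ pairwise disjoint, then $\vartheta(X) = 0$.

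\medskip

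The plan is to exploit boundedness twice — once for $X$ together with all its translates, and once via the functional's normalization on a suitably large half-open interval — to derive that infinitely many disjoint copies of $X$ all fit inside a set of finite $\vartheta$-value, forcing $\vartheta(X)$ to vanish. First I would invoke the Banach Theorem (Theorem~\ref{Lebesgue224}) to extend $\vartheta$ to a finitely additive translation-invariant functional $\eta$ defined on \emph{all} bounded subsets of $\mathbb{R}$; since $X \in \dom(\vartheta) \subseteq \mathcal{S}$ and $\eta$ extends $\vartheta$, we have $\eta(X) = \vartheta(X)$, so it suffices to show $\eta(X) = 0$. Working with $\eta$ removes the annoyance that $\mathcal{S}$ might not contain the translates $X + h_k$ or finite unions thereof. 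Next, because $X$ is bounded and $\{h_k\}$ is bounded, there is a bounded interval — after rescaling and translating, we may take it to be $[0, N)$ for some positive integer $N$ — that contains $X + h_k$ for every $k$. Using translation-invariance of $\eta$ and finite additivity of $\eta$ on the $[0,N)$ decomposition into $N$ unit translates, one gets $\eta([0,N)) = N \cdot \eta([0,1)) = N \cdot \vartheta([0,1)) = N$.

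\medskip

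Now the core estimate: fix any $m \in \mathbb{N}$ and consider $Y_m = \bigcup_{k=1}^m (X + h_k) \subseteq [0, N)$. The sets $X + h_k$ are pairwise disjoint (a subfamily of the given disjoint family), each has $\eta$-value $\eta(X + h_k) = \eta(X) = \vartheta(X)$ by translation-invariance, so finite additivity gives $\eta(Y_m) = m \cdot \vartheta(X)$. Monotonicity of $\eta$ (which follows from finite additivity and non-negativity: $A \subseteq B$ bounded implies $\eta(B) = \eta(A) + \eta(B \setminus A) \geq \eta(A)$) together with $Y_m \subseteq [0, N)$ yields $m \cdot \vartheta(X) \leq \eta([0,N)) = N$ for every $m \in \mathbb{N}$. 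Since $\vartheta(X) \geq 0$, letting $m \to \infty$ forces $\vartheta(X) = 0$, completing the proof.

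\medskip

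I expect the main obstacle to be bookkeeping rather than any deep difficulty: one must be careful that the extension $\eta$ genuinely agrees with $\vartheta$ on $X$ (immediate from "$\eta$ is an extension of $\vartheta$" in Theorem~\ref{Lebesgue224}), that monotonicity and additivity of $\eta$ are legitimately available on the full domain $\mathcal{B}_b(\mathbb{R})$ (they are, since $\eta$ is a finitely additive non-negative functional there and $\mathcal{B}_b(\mathbb{R})$ is a ring), and that the interval $[0,N)$ can be arranged to contain all the translates $X + h_k$ — this uses only that $X \cup \bigcup_k (X + h_k)$ is bounded, being contained in $(\sup$-bound of $X) + (\sup$-bound of $\{h_k\})$-sized window, so a large enough integer $N$ works after a harmless translation. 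The normalization $\eta([0,N)) = N$ relies on the half-open interval $[0,1)$ tiling $[0,N)$ by integer translates with no overlap, which is exactly why the "$[0,1)$" (rather than a closed interval) appears in the hypotheses.
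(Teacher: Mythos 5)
Your argument is correct and is essentially the standard proof of this lemma from the cited source \cite{Kh2}: extend $\vartheta$ to $\eta$ on all of $\mathcal{B}_b(\mathbb{R})$ via the Banach Theorem, pack $m$ pairwise disjoint translates of $X$ (all of equal $\eta$-value by translation invariance) into a half-open interval of finite $\eta$-value $N$, and conclude $m\,\vartheta(X)\leq N$ for every $m$. The paper itself gives no proof, quoting the result from \cite{Kh2}, so there is nothing to contrast with; your only cosmetic slip is the passing mention of ``rescaling,'' which translation invariance alone does not license, but you do not actually use it since a large enough $N$ after a translation suffices.
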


\begin{lemma}[\cite{Kh2}]\label{Lebesgue226}
Let $X$ be a bounded subset of a Vitali selector $V$. Then, $X$ has the property indicated in Lemma \ref{Lebesgue225}.
\end{lemma}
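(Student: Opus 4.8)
The plan is to use the fundamental disjointness property of Vitali selectors recorded in Proposition \ref{Lebesgue219}(i): if $V$ is a Vitali selector related to a countable dense subgroup $Q$ of $(\mathbb{R},+)$ and $q_1,q_2\in Q$ are distinct, then $(V+q_1)\cap(V+q_2)=\emptyset$. Since $X$ sits inside such a $V$, the translates $X+q$ for $q\in Q$ inherit this disjointness; the only remaining task is to select the translation vectors so that they form a \emph{bounded} infinite sequence, and here I would use that $Q$, being dense in $\mathbb{R}$, meets every neighbourhood of $0$ and is therefore infinite inside any bounded interval.

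In detail, I would first fix the countable dense subgroup $Q$ to which $V$ is related, and observe that $Q\cap(-1,1)$ is infinite (density of $Q$ produces elements of $Q$ arbitrarily close to $0$). Then I would choose a sequence $\{h_k:k\in\mathbb{N}\}$ of pairwise distinct elements of $Q\cap(-1,1)$, which is bounded and infinite by construction. Next, for indices $k\neq l$, since $Q$ is a subgroup and $h_k\neq h_l$ we have $h_k-h_l\in Q\setminus\{0\}$, so Proposition \ref{Lebesgue219}(i) (with $q_1=h_k$, $q_2=h_l$) yields $(V+h_k)\cap(V+h_l)=\emptyset$. Finally, from $X\subseteq V$ I get $X+h_k\subseteq V+h_k$ for each $k$, hence $(X+h_k)\cap(X+h_l)\subseteq(V+h_k)\cap(V+h_l)=\emptyset$ whenever $k\neq l$, which shows the family $\{X+h_k:k\in\mathbb{N}\}$ is disjoint. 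Therefore $X$ has the property required in Lemma \ref{Lebesgue225}. (The boundedness of $X$ is not actually used in this argument, and the case $X=\emptyset$ is trivial.)

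I do not expect a genuine obstacle in this proof; the only subtlety to keep in mind is that the translation vectors must be drawn from $Q$ rather than being arbitrary small reals — otherwise the disjointness of the translates of $V$ cannot be invoked — and that it is precisely the density of the subgroup $Q$ that supplies infinitely many such vectors inside a bounded interval.
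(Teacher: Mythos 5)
Your argument is correct: the paper states this lemma as a citation from \cite{Kh2} without reproducing a proof, and your use of Proposition \ref{Lebesgue219}(i) together with the density of $Q$ to extract infinitely many distinct elements of $Q\cap(-1,1)$ is exactly the standard argument behind that reference. Your parenthetical observations are also accurate --- the boundedness of $X$ plays no role in establishing the disjointness property itself, but only in ensuring $X\in\mathcal{B}_b(\mathbb{R})$ so that Lemma \ref{Lebesgue225} can later be applied to it.
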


It follows from Equality \ref{Fourier2} in Proposition~\ref{Lebesgue219}, that the results of Theorem \ref{Lebesgue222}  and Theorem \ref{Lebesgue223},  are not valid for infinite countable unions of Vitali selectors of $\mathbb{R}$.  However,  the following theorem provides examples of infinite countable unions of Vitali selectors without the Baire property in $\mathbb{R}$.

\begin{theorem}[\cite{AN}]\label{Lebesgue227}
If $V$ is a Vitali selector of $\mathbb{R}$  related to $Q$ and $\Gamma$ is a non-empty proper subset of ${Q}$, then the set $U=\bigcup \{V+q: q\in \Gamma \}$ does not possess the Baire property in $\mathbb{R}$.
\end{theorem}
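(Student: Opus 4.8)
The plan is to argue by contradiction, in the same spirit as the proof of Theorem \ref{Lebesgue222}, by exploiting the density of $V+q$ in $\mathbb{R}$ together with a Baire-category pigeonhole argument. First I would suppose, for contradiction, that $U=\bigcup\{V+q: q\in\Gamma\}$ has the Baire property in $\mathbb{R}$. Then $U=O\Delta M$ for some open set $O$ and some meager set $M$. The key observation is that $\Gamma$ is a \emph{proper} subset of $Q$, so there is an element $q_0\in Q\setminus\Gamma$; then by part (i) of Proposition \ref{Lebesgue219} the set $V+q_0$ is disjoint from every $V+q$ with $q\in\Gamma$, hence $V+q_0\subseteq\mathbb{R}\setminus U$. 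The first main step is to show that $O\neq\emptyset$: if $O=\emptyset$ then $U=M$ would be meager, but $U\supseteq V+q$ for a fixed $q\in\Gamma$, and $V+q$ is not meager by part (iii) of Proposition \ref{Lebesgue219} (it is homeomorphic to $V$, which is not of the first category), a contradiction. So $O$ is a non-empty open set.

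Next I would derive a contradiction from the inclusion $V+q_0\subseteq\mathbb{R}\setminus U$. Since $U=O\Delta M$, we have $O\setminus M\subseteq U$, so $(V+q_0)\cap(O\setminus M)=\emptyset$, i.e. $(V+q_0)\cap O\subseteq M$. Now the crucial point is that $V$, and hence $V+q_0$, can be chosen so that this gives a contradiction — but in fact we do not get to choose $V$; rather, I would use that $\mathbb{R}\setminus(V+q_0)$ being dense does \emph{not} immediately help, so instead I would reverse the roles. Consider instead $\mathbb{R}\setminus U$: since $U$ has the Baire property, so does $\mathbb{R}\setminus U = O^c\Delta M$ (up to a meager set, $\mathbb{R}\setminus U$ equals the open set $\inte(O^c)$). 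The complement $\mathbb{R}\setminus U$ contains $V+q_0$, which is non-meager, so by the same argument as above $\mathbb{R}\setminus U$ contains a non-empty open set modulo meager, say $O\setminus M\neq\emptyset$ for a non-empty open $O$ and meager $M$ with $O\setminus M\subseteq\mathbb{R}\setminus U$. Thus $(O\setminus M)\cap U=\emptyset$. But now take any fixed $q\in\Gamma$ and note $V+q\subseteq U$; moreover, for \emph{every} $q'\in Q$ the translate $V+q'$ meets $O$, and in particular, since $\mathbb{R}=\bigcup\{V+q:q\in Q\}$, every point of the non-meager set $O\setminus M$ lies in some $V+q'$. The contradiction is obtained by showing that $O\setminus M$ must meet $U$: indeed, translating $O\setminus M$ by a suitable element $q-q'\in Q$ and using translation-invariance of the meager ideal, one produces a point of $O\setminus M$ lying in $V+q\subseteq U$.

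To make that last step precise I would argue as follows. We have a non-empty open $O$ and meager $M$ with $(O\setminus M)\cap U=\emptyset$. Fix $q\in\Gamma$. Since $Q$ is dense and $V+q$ is dense in $\mathbb{R}$ (each equivalence class is dense, and $V+q$ contains a point of each class... ), more directly: $\mathbb{R}=\bigcup_{q'\in Q}(V+q')$, so $O\setminus M=\bigcup_{q'\in Q}\big((V+q')\cap(O\setminus M)\big)$; as this is a countable union and $O\setminus M$ is non-meager (it is open minus meager, non-empty), at least one $(V+q')\cap(O\setminus M)$ is non-meager, in particular non-empty, for some $q'\in Q$. Pick a point $x$ in it. Then $x-q'+q\in V+q\subseteq U$, while $x-q'+q\in (O\setminus M)-q'+q = (O-q'+q)\setminus(M-q'+q)$; here $O-q'+q$ is open and $M-q'+q$ is meager by translation invariance of $\mathcal{I}_m$. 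This shows the \emph{translated} open-minus-meager set meets $U$, which by itself is not yet the contradiction; so I would instead run the pigeonhole directly inside $U$: since $U\supseteq V+q$, repeating the decomposition argument with $U$ in place of $\mathbb{R}\setminus U$ shows $U$ also equals an open-minus-meager set modulo meager on a non-empty open set, and then the two disjoint open sets (one inside $U$, one inside $\mathbb{R}\setminus U$) together with the homogeneity of $\mathbb{R}$ under $Q$-translations force the non-meager remainder to intersect, contradicting $(O\setminus M)\cap U=\emptyset$. The main obstacle I anticipate is organizing this pigeonhole cleanly: one must combine (a) the Baire-property decomposition of $U$, (b) the covering $\mathbb{R}=\bigcup_{q\in Q}(V+q)$, (c) the disjointness of the $V+q$, and (d) translation-invariance of $\mathcal{I}_m$, so that the non-meager "open part" of $U$ and the non-meager "open part" of $\mathbb{R}\setminus U$ are shown to overlap after a suitable rational translation — essentially the argument already used for Theorem \ref{Lebesgue222}, adapted to handle a single infinite union rather than a finite one.
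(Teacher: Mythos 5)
The paper itself gives no proof of Theorem \ref{Lebesgue227}; it is quoted from \cite{AN}. Judged on its own terms, your proposal sets up a reasonable frame (argue by contradiction; note that both $U$ and $\mathbb{R}\setminus U$ are non-meager because each contains a translate of the non-meager set $V$; hence, if $U$ had the Baire property, each of $U$ and $\mathbb{R}\setminus U$ would be comeager in some non-empty open set), but the concluding step is never actually carried out, and the route you sketch for it fails. Translating the ``open part'' of $U$ by some $p\in Q$ so that it overlaps the ``open part'' of $\mathbb{R}\setminus U$ only yields $(U+p)\cap(\mathbb{R}\setminus U)\neq\emptyset$, i.e.\ $(\Gamma+p)\not\subseteq\Gamma$, which is no contradiction at all, because $U$ is not invariant under $Q$-translations. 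Concretely, take $Q=\mathbb{Q}$, a Vitali set $V\subseteq[0,1]$ and $\Gamma=\{q\in\mathbb{Q}: q\geq 0\}$: then $U\supseteq(1,2)$ and $\mathbb{R}\setminus U\supseteq(-2,-1)$ are honest open inclusions, and every conclusion your pigeonhole produces is satisfied without any contradiction. This example also shows that the finite-union pigeonhole behind Theorem \ref{Lebesgue222} does not ``adapt'' to an infinite $\Gamma$, as your last sentence hopes: an infinite union of translates of $V$ can genuinely contain a set of the form $O\setminus M$, indeed a non-empty open set, so the disjointness of the single translates $V+q$ cannot be exploited in the naive way.

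Two ideas are missing. First, localize the non-meagerness of $V$: by the Banach category argument, the closed set of points at which $V$ is non-meager in every neighbourhood is non-meager, hence has non-empty interior, so there is an interval $(a,b)$ such that $V$ is non-meager in every non-empty open subset of $(a,b)$, and therefore $V+q$ is non-meager in every non-empty open subset of $(a+q,b+q)$. Second, a generation argument on $Q$: since $Q$ is dense, $Q$ is generated by $Q\cap(-\varepsilon,\varepsilon)$ for every $\varepsilon>0$; writing $q_2-q_1$ (with $q_1\in\Gamma$, $q_2\in Q\setminus\Gamma$) as a finite sum of elements of $Q\cap(-(b-a),b-a)$ and examining the partial sums, one finds $q\in\Gamma$ and $q'\in Q\setminus\Gamma$ with $|q-q'|<b-a$. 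Then $J=(a+q,b+q)\cap(a+q',b+q')$ is a non-empty open interval in every non-empty open subset of which both $V+q\subseteq U$ and $V+q'\subseteq\mathbb{R}\setminus U$ are non-meager. If $U$ had the Baire property, this would force both $U$ and $\mathbb{R}\setminus U$ to be comeager in $J$, making the non-empty open set $J$ meager --- the desired contradiction. Without these two steps your argument does not close.
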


\begin{question}\label{Lebesgue228}
\emph{Let $V$ be a Vitali selector of $\mathbb{R}$ related to $Q$ and let $\Gamma$ be an infinite countable proper subset of $Q$. Under what conditions the set $U=\bigcup \{V+q: q\in \Gamma \}$ is not measurable in the Lebesgue sense?}
\end{question}

It is clear that if $Q\setminus \Gamma$ is finite, then by Theorem \ref{Lebesgue223}, the set $W=\bigcup \{V+q: q\in Q\setminus \Gamma \}$ is not measurable in the Lebesgue sense. Consequently, by Equality \ref{Fourier2}, the set $U=\bigcup \{V+q: q\in \Gamma\}$ is also not measurable in the Lebesgue sense. 

We also note \cite{CN1} that, if  $O$ is a non-empty open subset of $\mathbb{R}$, then there exists a sequence $\{V_1, V_2, \cdots \}$ of (disjoint) Vitali  selectors of $\mathbb{R}$ such that 
$O=\bigcup_{i=1}^{\infty} V_i$. It is clear that such a union is Lebesgue measurable and has the Baire property in $\mathbb{R}$.

\subsection{Bernstein sets of the additive topological group of real numbers}
The Bernstein sets on $\mathbb{R}$ constitute also an example of subsets of $\mathbb{R}$ which are not measurable in the Lebesgue sense. A Bernstein set is a subset of $\mathbb{R}$ that meets every uncountable closed subset of $\mathbb{R}$ but that contains none of them.

\begin{definition}[\cite{Kh1}, \cite{Ox}]
A subset $B$ of $\mathbb{R}$ is called a \emph{Bernstein set} if $B\cap F\neq \emptyset$ and $(\mathbb{R}\setminus B)\cap F\neq \emptyset$ for each uncountable closed subset $F$ of $\mathbb{R}$.  
\end{definition}
The existence and the construction of Bernstein sets on $\mathbb{R}$ is based on the Method of Transfinite Recursion as it is detailed in \cite{Ox}. For Bernstein sets, the following statements hold.

\begin{proposition}[\cite{Ox}, \cite{Ts}]\label{Lebesgue229}
Let $B$ be a Bernstein subset of $\mathbb{R}$. Then the following statements hold.
\begin{enumerate}[(i)]
\item The complement $\mathbb{R}\setminus B$ of $B$ is also a Bernstein set, and 
$\inte (B)=\inte (\mathbb{R}\setminus B)=\emptyset$.
\item Both $B$ and $\mathbb{R}\setminus B$ are dense subsets of  $\mathbb{R}$ and  we have $\card (B)=\card (\mathbb{R}\setminus B)=\mathfrak{c}$.
\end{enumerate}
\end{proposition}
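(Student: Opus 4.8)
The plan is to derive everything directly from the defining property of a Bernstein set --- namely that $B\cap F\neq\emptyset$ and $(\mathbb{R}\setminus B)\cap F\neq\emptyset$ for every uncountable closed $F\subseteq\mathbb{R}$ --- together with two classical facts about the reals: that every nondegenerate closed interval is an uncountable closed set, and that $\mathbb{R}$ contains $\mathfrak{c}$ pairwise disjoint nonempty perfect sets. For part (i), I would first observe that the defining condition is visibly symmetric under interchanging $B$ with $\mathbb{R}\setminus B$, since $\mathbb{R}\setminus(\mathbb{R}\setminus B)=B$; hence $\mathbb{R}\setminus B$ is again a Bernstein set. To get $\inte(B)=\emptyset$, argue by contradiction: if $\inte(B)\neq\emptyset$ then $B$ contains a nonempty open interval, hence a nondegenerate closed interval $[c,d]$ with $c<d$; this is an uncountable closed subset of $\mathbb{R}$ lying entirely in $B$, so $(\mathbb{R}\setminus B)\cap[c,d]=\emptyset$, contradicting the Bernstein property. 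Applying the same to the Bernstein set $\mathbb{R}\setminus B$ yields $\inte(\mathbb{R}\setminus B)=\emptyset$.

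For part (ii), density is then immediate: for any $A\subseteq\mathbb{R}$ one has $\cl(A)=\mathbb{R}\setminus\inte(\mathbb{R}\setminus A)$, so from $\inte(\mathbb{R}\setminus B)=\emptyset$ we get $\cl(B)=\mathbb{R}$, i.e. $B$ is dense, and symmetrically for $\mathbb{R}\setminus B$ (alternatively, every nonempty open interval contains a nondegenerate closed subinterval, which must meet both $B$ and $\mathbb{R}\setminus B$). For the cardinalities, fix a family $\{P_{\xi}:\xi<\mathfrak{c}\}$ of pairwise disjoint nonempty perfect subsets of $\mathbb{R}$; such a family exists, e.g. by using a homeomorphism between the Cantor set $C$ and $C\times C$ and pulling back the vertical fibres $\{x\}\times C$, each of which is closed in $C$, hence closed in $\mathbb{R}$, and homeomorphic to $C$, hence perfect and of cardinality $\mathfrak{c}$. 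Each $P_{\xi}$ is uncountable and closed, so by the Bernstein property one may choose $b_{\xi}\in B\cap P_{\xi}$; since the $P_{\xi}$ are disjoint, the points $b_{\xi}$ are pairwise distinct, so $\card(B)\geq\mathfrak{c}$, and as $B\subseteq\mathbb{R}$ forces $\card(B)\leq\mathfrak{c}$, we conclude $\card(B)=\mathfrak{c}$. The identical argument with $\mathbb{R}\setminus B$ in place of $B$ (legitimate since $\mathbb{R}\setminus B$ is Bernstein by part (i)) gives $\card(\mathbb{R}\setminus B)=\mathfrak{c}$.

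The only step that needs more than a one-line justification is the existence of the family $\{P_{\xi}:\xi<\mathfrak{c}\}$ of pairwise disjoint perfect sets used in the cardinality count, and I expect that to be the main (though still routine) obstacle; it can be handled either by the $C\cong C\times C$ decomposition sketched above or, more explicitly, by a transfinite-recursion construction parallel to the one that produces Bernstein sets in the first place. Everything else reduces to the symmetry of the definition and the observation that nondegenerate closed intervals are uncountable closed sets.
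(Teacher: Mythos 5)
Your proof is correct. The paper does not actually prove Proposition~\ref{Lebesgue229} --- it is quoted from \cite{Ox} and \cite{Ts} --- and your argument (symmetry of the defining condition for the complement, nondegenerate closed intervals to kill the interiors and give density, and a family of $\mathfrak{c}$ pairwise disjoint perfect sets for the cardinality count) is exactly the standard proof found in those sources, so there is nothing to add.
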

The family  $\mathcal{B}_E(\mathbb{R})$ of all Bernstein subsets of $\mathbb{R}$ is invariant under the action of the group $\mathcal{H}(\mathbb{R})$ of all homeomorphisms of $\mathbb{R}$ onto itself, i.e. if $B\in \mathcal{B}_E(\mathbb{R})$ and $h\in \mathcal{H}(\mathbb{R})$ then $h(B)\in \mathcal{B}_E(\mathbb{R})$.

\begin{theorem}[\cite{Ox}]\label{Lebesgue231}
Any Bernstein set $B$ on $\mathbb{R}$ is not measurable in the Lebesgue sense and does not have the Baire property. Indeed, every Lebesgue measurable subset of either $B$ or $\mathbb{R}\setminus B$ has the Lebesgue measure zero, and every subset of either $B$ or $\mathbb{R}\setminus B$ with the Baire property is of the first category.
\end{theorem}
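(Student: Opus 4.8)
The plan is to prove the two sharper ``Indeed'' assertions first and then read off the non-measurability of $B$ and the failure of its Baire property as immediate corollaries. By Proposition~\ref{Lebesgue229}, the complement $\mathbb{R}\setminus B$ is again a Bernstein set, so in each of the two ``subset'' statements it suffices to treat a subset of $B$: the case of a subset of $\mathbb{R}\setminus B$ follows by applying the very same argument to the Bernstein set $\mathbb{R}\setminus B$ in place of $B$.

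First I would handle the measure statement. Suppose, towards a contradiction, that some Lebesgue measurable $E\subseteq B$ has $\mu(E)>0$; intersecting $E$ with a large interval we may assume $0<\mu(E)<\infty$. By inner regularity of Lebesgue measure there is a closed set $F\subseteq E$ with $\mu(F)>0$. Since every countable set is null ($\mathcal{I}_c\subseteq\mathcal{N}_0$), the set $F$ is uncountable; thus $F$ is an uncountable closed subset of $\mathbb{R}$ with $F\subseteq E\subseteq B$, whence $(\mathbb{R}\setminus B)\cap F=\emptyset$, contradicting the defining property of a Bernstein set. Hence every Lebesgue measurable subset of $B$ has measure zero.

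Next comes the Baire property statement, whose core is the step I expect to be the main obstacle. Let $A\subseteq B$ have the Baire property and suppose, towards a contradiction, that $A$ is not of the first category. Write $A=O\,\Delta\,M$ with $O$ open and $M$ meager; since $A$ is not meager we have $O\neq\emptyset$, so $O$ contains a closed interval $[a,b]$ with $a<b$. The point is to exhibit an uncountable closed set $P\subseteq[a,b]\setminus M$. To this end, cover the meager set $M\cap[a,b]$ by an $F_\sigma$ set $\bigcup_{n=1}^{\infty}C_n$ with each $C_n$ closed and nowhere dense, and build a Cantor scheme $\{I_s:s\in\{0,1\}^{<\omega}\}$ of non-degenerate closed subintervals of $[a,b]$ such that $I_{s0}$ and $I_{s1}$ are disjoint subsets of $I_s$, the lengths shrink to $0$ along every branch, and $I_s\cap C_n=\emptyset$ whenever $s$ has length $n$; the room to avoid $C_n$ at level $n$ is exactly what the nowhere-density of $C_n$ provides. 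Then $P=\bigcap_{n\ge 1}\bigcup_{|s|=n}I_s$ is a non-empty perfect---hence uncountable---closed subset of $[a,b]$ disjoint from every $C_n$, so $P\cap M=\emptyset$ and therefore $P\subseteq[a,b]\setminus M\subseteq O\setminus M\subseteq A\subseteq B$. (Alternatively, $[a,b]\setminus\bigcup_n C_n$ is an uncountable $G_\delta$ set and so contains a perfect set by the perfect set property of Borel sets.) Again $(\mathbb{R}\setminus B)\cap P=\emptyset$, contradicting the Bernstein property; hence every subset of $B$ with the Baire property is of the first category.

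Finally I would close the loop. If $B$ were Lebesgue measurable, the first part applied to $E=B$ gives $\mu(B)=0$, and applied to the Bernstein set $\mathbb{R}\setminus B$ gives $\mu(\mathbb{R}\setminus B)=0$, so $\mu(\mathbb{R})=0$, which is absurd. Likewise, if $B$ had the Baire property, the second part would force both $B$ and $\mathbb{R}\setminus B$ to be of the first category, so $\mathbb{R}=B\cup(\mathbb{R}\setminus B)$ would be meager, contradicting the Baire Category Theorem. Therefore $B$ is neither Lebesgue measurable, nor does it have the Baire property, which completes the proof.
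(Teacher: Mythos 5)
Your proof is correct and complete: the inner-regularity argument producing an uncountable closed $F\subseteq E\subseteq B$, the Cantor-scheme construction of a perfect set inside $O\setminus M$, and the final reduction using $\mathbb{R}=B\cup(\mathbb{R}\setminus B)$ are all sound. The paper itself gives no proof of this theorem --- it is recalled from Oxtoby's \emph{Measure and Category} as a classical fact --- and your argument is essentially the standard one found there, so there is nothing to reconcile.
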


\begin{corollary}[\cite{Ox},\cite{Ga}]\label{Lebesgue232}
 Any set with positive Lebesgue measure has a non-Lebesgue measurable subset.  Any set of second category has a subset that does not have the Baire property.
\end{corollary}

We point out that there exist Bernstein subsets of $\mathbb{R}$, which have some additional algebraic structures for subgroups of the additive group $(\mathbb{R},+)$, as it is indicated in the following statements.

\begin{lemma}[\cite{Kh1}, \cite{MC}]\label{Lebesgue233}
There exists a subgroup $B$ of $(\mathbb{R},+)$, such that the factor group $\mathbb{R}/B$ is isomorphic to the group $(\mathbb{R},+)$, and $B$ is a Bernstein set in $\mathbb{R}$.
\end{lemma}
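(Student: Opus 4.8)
The plan is to construct $B$ by transfinite recursion while keeping control of the $\mathbb{Q}$-linear structure of $\mathbb{R}$, so that $B$ emerges simultaneously as a $\mathbb{Q}$-subspace of $\mathbb{R}$ (hence a subgroup), as a set meeting every uncountable closed set together with its complement, and with a $\mathbb{Q}$-linear complement of full dimension $\mathfrak{c}$. Two standard facts are used at the outset: every uncountable closed subset of $\mathbb{R}$ has cardinality $\mathfrak{c}$ (Cantor--Bendixson: a closed set is countable or contains a perfect set), and, since $\mathbb{R}$ is second countable, there are exactly $\mathfrak{c}$ closed subsets of $\mathbb{R}$, hence exactly $\mathfrak{c}$ uncountable ones. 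Fix an enumeration $\{F_\alpha : \alpha < \mathfrak{c}\}$ of all uncountable closed subsets of $\mathbb{R}$.

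Next I would run the recursion. At stage $\alpha < \mathfrak{c}$, assuming $\{a_\beta, b_\beta : \beta < \alpha\}$ have already been chosen, the $\mathbb{Q}$-linear span $S$ of this set satisfies $\card(S) \le \card(\alpha) + \aleph_0 < \mathfrak{c} = \card(F_\alpha)$, so I may pick $a_\alpha \in F_\alpha \setminus S$ and then $b_\alpha \in F_\alpha \setminus \mathrm{span}_{\mathbb{Q}}(S \cup \{a_\alpha\})$. By construction each newly chosen element lies outside the $\mathbb{Q}$-span of all elements chosen before it, so the whole family $\{a_\alpha : \alpha < \mathfrak{c}\} \cup \{b_\alpha : \alpha < \mathfrak{c}\}$ is $\mathbb{Q}$-linearly independent. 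Extend it to a Hamel basis $H = \{a_\alpha : \alpha < \mathfrak{c}\} \cup \{b_\alpha : \alpha < \mathfrak{c}\} \cup C$ of $\mathbb{R}$ over $\mathbb{Q}$, and set $B = \mathrm{span}_{\mathbb{Q}}\big(\{a_\alpha : \alpha < \mathfrak{c}\} \cup C\big)$.

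It then remains to check three things. First, $B$ is a $\mathbb{Q}$-subspace of $\mathbb{R}$, hence a subgroup of $(\mathbb{R},+)$. Second, $B$ is a Bernstein set: for each $\alpha$ we have $a_\alpha \in B \cap F_\alpha$, while $b_\alpha \in F_\alpha$ and $b_\alpha \notin B$ (it is a basis vector deliberately omitted from the spanning set of $B$), so $F_\alpha \cap B \ne \emptyset$ and $F_\alpha \setminus B \ne \emptyset$; since $\{F_\alpha\}$ exhausts the uncountable closed sets, Lemma \ref{Lebesgue230} gives that $B$ is a Bernstein set. Third, $\mathbb{R} = B \oplus \mathrm{span}_{\mathbb{Q}}(\{b_\alpha : \alpha < \mathfrak{c}\})$ as $\mathbb{Q}$-vector spaces, so $\mathbb{R}/B$ is isomorphic to $\mathrm{span}_{\mathbb{Q}}(\{b_\alpha : \alpha < \mathfrak{c}\})$, which has $\mathbb{Q}$-dimension $\mathfrak{c}$; since $(\mathbb{R},+)$ is also a $\mathbb{Q}$-vector space of dimension $\mathfrak{c}$ and vector spaces over a fixed field are classified up to isomorphism by their dimension, $\mathbb{R}/B \cong (\mathbb{R},+)$ as abelian groups.

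The only point requiring genuine care is the bookkeeping that makes these three requirements compatible at once; this is precisely why two independent witnesses $a_\alpha$ and $b_\alpha$ are extracted from $F_\alpha$ at each stage rather than one. The $a_\alpha$'s, together with the filler set $C$, generate the subgroup $B$ and guarantee that $B$ hits every $F_\alpha$, while the $b_\alpha$'s simultaneously guarantee that $\mathbb{R}\setminus B$ hits every $F_\alpha$ and supply $\mathfrak{c}$ linearly independent vectors in the quotient, so that $\mathbb{R}/B$ is as large as $\mathbb{R}$. Beyond Cantor--Bendixson, the count of closed subsets of $\mathbb{R}$, the cardinal inequality $\card(\alpha) + \aleph_0 < \mathfrak{c}$ used at each stage, and the classification of $\mathbb{Q}$-vector spaces by dimension, nothing further is needed; the existence of such a $B$ is classical and is the content of \cite{Kh1}.
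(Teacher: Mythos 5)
Your proof is correct. The paper does not actually prove this lemma — it imports it directly from Kharazishvili \cite{Kh1} — and your transfinite recursion (choosing two $\mathbb{Q}$-independent witnesses $a_\alpha, b_\alpha$ from each uncountable closed set, spanning $B$ by the $a_\alpha$'s plus a Hamel-basis completion, and identifying $\mathbb{R}/B$ with the $\mathfrak{c}$-dimensional span of the $b_\alpha$'s) is exactly the classical construction behind the cited result, with all the cardinality bookkeeping handled correctly.
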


\begin{theorem}[\cite{Kh1}]\label{Lebesgue234}
There exist two subgroups $G_1$ and $G_2$ of the additive group $(\mathbb{R},+)$ such that $G_1\cap G_2=\{0\}$, and both $G_1$ and $G_2$ are Bernstein sets in $\mathbb{R}$. 
\end{theorem}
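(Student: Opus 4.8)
The plan is to build $G_1$ and $G_2$ simultaneously as $\mathbb{Q}$-linear subspaces of $\mathbb{R}$ (which are in particular subgroups of $(\mathbb{R},+)$), with their spanning sets chosen to be \emph{jointly} $\mathbb{Q}$-linearly independent. The observation that shortens the argument is this: if $G_1$ and $G_2$ are $\mathbb{Q}$-subspaces with $G_1\cap G_2=\{0\}$, then every nonzero element of $G_1$ automatically lies in $\mathbb{R}\setminus G_2$, and symmetrically. Hence a single point of an uncountable closed set $F$ lying in $G_1\setminus\{0\}$ simultaneously witnesses $F\cap G_1\neq\emptyset$ \emph{and} $F\setminus G_2\neq\emptyset$. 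By Lemma \ref{Lebesgue230}, it is therefore enough to arrange that for every uncountable closed set $F\subseteq\mathbb{R}$ there is a nonzero point of $F$ inside $G_1$ and a nonzero point of $F$ inside $G_2$.

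\textbf{The recursion.} Since $\mathbb{R}$ has exactly $\mathfrak{c}$ closed subsets, I would enumerate the uncountable closed subsets of $\mathbb{R}$ as $\{F_\alpha:\alpha<\mathfrak{c}\}$, regarding $\mathfrak{c}$ as the initial ordinal of its cardinality, and recall that each $F_\alpha$ has cardinality $\mathfrak{c}$ (an uncountable closed subset of $\mathbb{R}$ contains a nonempty perfect set). I then define, by recursion on $\alpha<\mathfrak{c}$, elements $a_\alpha,b_\alpha\in F_\alpha$ as follows. Given $a_\gamma,b_\gamma$ for all $\gamma<\alpha$, let $W$ be the $\mathbb{Q}$-linear span of $\{a_\gamma,b_\gamma:\gamma<\alpha\}$. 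This set has at most $2\cdot\card(\alpha)<\mathfrak{c}$ members, so $\card(W)<\mathfrak{c}$, and thus $F_\alpha\setminus W$ has cardinality $\mathfrak{c}$; choose $a_\alpha\in F_\alpha\setminus W$. Now the $\mathbb{Q}$-span of $\{a_\gamma,b_\gamma:\gamma<\alpha\}\cup\{a_\alpha\}$ still has cardinality $<\mathfrak{c}$, so it does not contain $F_\alpha$; choose $b_\alpha$ in $F_\alpha$ but outside that span. By construction $\{a_\gamma,b_\gamma:\gamma\leq\alpha\}$ is $\mathbb{Q}$-linearly independent at every stage, and therefore the whole set $L=\{a_\alpha:\alpha<\mathfrak{c}\}\cup\{b_\alpha:\alpha<\mathfrak{c}\}$ is $\mathbb{Q}$-linearly independent, since any finite subset of $L$ lies in some $\{a_\gamma,b_\gamma:\gamma\leq\alpha\}$.

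\textbf{Conclusion.} I would then set $G_1=\mathrm{span}_{\mathbb{Q}}\{a_\alpha:\alpha<\mathfrak{c}\}$ and $G_2=\mathrm{span}_{\mathbb{Q}}\{b_\alpha:\alpha<\mathfrak{c}\}$; these are $\mathbb{Q}$-subspaces of $\mathbb{R}$, hence subgroups of $(\mathbb{R},+)$. A nonzero element of $G_1\cap G_2$ would produce a nontrivial $\mathbb{Q}$-linear relation among the elements of $L$, so $G_1\cap G_2=\{0\}$. Finally, for each $\alpha<\mathfrak{c}$ we have $a_\alpha\in F_\alpha\cap G_1$ with $a_\alpha\neq 0$, whence $a_\alpha\in F_\alpha\setminus G_2$; symmetrically $b_\alpha\in F_\alpha\cap G_2$ and $b_\alpha\in F_\alpha\setminus G_1$. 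By Lemma \ref{Lebesgue230}, both $G_1$ and $G_2$ are Bernstein sets in $\mathbb{R}$, which completes the argument.

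\textbf{Where the difficulty lies.} There is no deep obstacle: this is the familiar transfinite recursion used to manufacture a single Bernstein set, augmented with a linear-independence requirement and run for two sets at once. The only things that need care are the cardinality bookkeeping — that the $\mathbb{Q}$-span of fewer than $\mathfrak{c}$ reals has cardinality strictly less than $\mathfrak{c}$, so that at every stage $\alpha$ there really is room to pick $a_\alpha$ and $b_\alpha$ in $F_\alpha$ — together with the standard facts that $\mathbb{R}$ has only $\mathfrak{c}$ closed subsets and that every uncountable closed subset of $\mathbb{R}$ has cardinality $\mathfrak{c}$. The one conceptual point worth spelling out in a full write-up is why a reserved point retains its property at the end, i.e. why $a_\alpha\notin G_2$ even though the spanning set of $G_2$ grows past stage $\alpha$: this is precisely the relation $G_1\cap G_2=\{0\}$, which in turn comes from the joint $\mathbb{Q}$-linear independence of $L$.
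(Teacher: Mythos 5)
The paper does not prove this theorem at all: it is quoted verbatim from Kharazishvili's book \cite{Kh1}, so there is no in-paper argument to compare yours against. Your proof is correct and is essentially the standard construction one would expect behind the cited result: the usual Bernstein-set transfinite recursion, run for two sets simultaneously and strengthened by a joint $\mathbb{Q}$-linear-independence requirement so that the two resulting $\mathbb{Q}$-subspaces meet only in $0$. The cardinality bookkeeping is right (for $\alpha<\mathfrak{c}$ taken as an initial ordinal one has $\card(\alpha)<\mathfrak{c}$, and the $\mathbb{Q}$-span of fewer than $\mathfrak{c}$ reals has cardinality at most $\aleph_0\cdot\max(\aleph_0,\card(\alpha))<\mathfrak{c}$, while each uncountable closed $F_\alpha$ has cardinality $\mathfrak{c}$ by Cantor--Bendixson), and the one genuinely clever economy — that $a_\alpha\in F_\alpha\cap G_1$ with $a_\alpha\neq 0$ automatically witnesses $F_\alpha\setminus G_2\neq\emptyset$ because $G_1\cap G_2=\{0\}$, so no separate ``avoidance'' points need to be reserved — is correctly identified and justified (note $a_\alpha\neq 0$ comes for free since $0$ always lies in the span $W$ being avoided). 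Your appeal to Lemma \ref{Lebesgue230} to convert these two witnesses into the Bernstein property is exactly right. The only thing your construction does not deliver, and is not claimed to, is the extra structure in Kharazishvili's fuller statement (e.g.\ the factor group $\mathbb{R}/B$ being isomorphic to $(\mathbb{R},+)$ as in Lemma \ref{Lebesgue233}); for Theorem \ref{Lebesgue234} as stated, your argument is complete.
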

For other notions and facts, we refer the reader to \cite{CN4} and  \cite{Ox}.

\section{Semigroups of non-Lebesgue measurable sets generated by Bernstein sets}
\subsection{Semigroups related to a  Bernstein subgroup of $(\mathbb{R},+)$}
Let $B$ be a Bernstein subset of $\mathbb{R}$ which has the algebraic structure of being a subgroup of $(\mathbb{R},+)$ as in Lemma \ref{Lebesgue233}. Consider the collection $\mathbb{R}/B=\{B+x: x\in \mathbb{R}\}$ of all cosets of $B$. Without losing generality, we may assume that $\mathbb{R}/B$ consists of pairwise disjoint sets, and for simplicity the collection $\mathbb{R}/B$ will be denoted by $\mathcal{B}$. Hence, $\mathcal{B}$ is made of pairwise translated copies of $B$ by real numbers. From \cite{Kh3}, we observe that 
$\card (\mathcal{B}) \geq \aleph_0$,  where $\aleph_0=\card(\mathbb{N})$, and $\card (\mathcal{B})$ is the same as the cardinality of the set $\{\mathbb{R}\setminus Y: Y\in \mathcal{B}\}$. Since the family $\mathcal{B}_E(\mathbb{R})$ is invariant under the action of the group $\mathcal{H}(\mathbb{R})$, it follows that each element of $\mathcal{B}$ is also a Bernstein set on $\mathbb{R}$.  

Let $\mathcal{S}(\mathcal{B})=\{\bigcup_{i=1}^n B_i: B_i\in \mathcal{B}, n\in \mathbb{N}\}$ be the semigroup of sets generated by  $\mathcal{B}$. Evidently, the family $\mathcal{S}(\mathcal{B})$ is invariant under the action of the group $\Phi(\mathbb{R})$ of all translations of $\mathbb{R}$ onto itself.

\begin{lemma}\label{Lebesgue31}
If $U$ is an element of the semigroup $\mathcal{S}(\mathcal{B})$, then $U$ is a Bernstein subset of $\mathbb{R}$. Consequently, the semigroup $\mathcal{S}(\mathcal{B})$ consists of sets which are not measurable in the Lebesgue sense, and without the Baire property in $\mathbb{R}$.
\end{lemma}

\begin{proof}
Assume that $U\in \mathcal{S}(\mathcal{B})$. Then, there exists $n\in \mathbb{N}$ such that $U=\bigcup_{i=1}^n B_i$, where $B_i\in \mathcal{B}$ for $i=1,2,\cdots,n$.
Let $F$ be an uncountable closed subset of $\mathbb{R}$. 

 Since each $B_i$ is a Bernstein subset of $\mathbb{R}$, then we have $F\cap B_i\neq \emptyset$ for each $i=1,2,\cdots, n$. It follows that $F\cap U=F\cap \left( \bigcup_{i=1}^n B_i\right)=\bigcup_{i=1}^n (F\cap B_i)\neq \emptyset$.

Assume that $F\cap (\mathbb{R}\setminus U)=\emptyset$. Then $F\subseteq U$.  Let $B_k$ be an element of $\mathcal{B}$ for some $k\notin \{1,2, \cdots, n\}$. Such an element exists, since $\card(\mathcal{B})\geq \aleph_0$. Since each element of $\mathcal{B}$ is a Bernstein set, we must have $F\cap B_k\neq \emptyset$ but by construction, $B_k\cap U=B_k\cap (\bigcup_{i=1}^n B_i)=\emptyset$ implying that $\emptyset \neq F\cap B_k\subseteq U\cap B_k=\emptyset$ . Hence the inclusion $F\subseteq U$ is impossible. Necessarily, we must have $F\cap (\mathbb{R}\setminus U)\neq \emptyset$.

We conclude that $U$ is a Bernstein subset of $\mathbb{R}$. As a Bernstein set, $U$ is not measurable in the Lebesgue sense and does not  have the Baire property in $\mathbb{R}$.
\end{proof}

\begin{proposition}\label{Lebesgue33}
The families $\mathcal{S}(\mathcal{B})*\mathcal{N}_0$ and $\mathcal{N}_0*\mathcal{S}(\mathcal{B})$ are semigroups of sets on $\mathbb{R}$ such that $\mathcal{S}(\mathcal{B})\subseteq \mathcal{N}_0*\mathcal{S}(\mathcal{B})\subseteq \mathcal{S}(\mathcal{B})*\mathcal{N}_0$. They are invariant under the action of the group $\Phi(\mathbb{R})$, and they consist of sets which are not measurable in the Lebesgue sense.
\end{proposition}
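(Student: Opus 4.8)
The plan is to deduce everything from the abstract machinery of Section~2 together with Lemma~\ref{Lebesgue31}, so that the proposition becomes essentially a corollary rather than a fresh argument. First I would record the two structural facts that feed into Proposition~\ref{Lebesgue25}: the family $\mathcal{N}_0$ is an ideal of sets on $\mathbb{R}$ (in fact a $\sigma$-ideal, as recalled in Subsection~2.2, and in particular closed under subsets), and $\mathcal{S}(\mathcal{B})$ is a semigroup of sets on $\mathbb{R}$ by its very construction (Example~\ref{Lebesgue22}). Applying Proposition~\ref{Lebesgue25} with $\mathcal{S}=\mathcal{S}(\mathcal{B})$ and $\mathcal{I}=\mathcal{N}_0$ then yields at once that $\mathcal{S}(\mathcal{B})*\mathcal{N}_0$ and $\mathcal{N}_0*\mathcal{S}(\mathcal{B})$ are semigroups of sets on $\mathbb{R}$ and that $\mathcal{S}(\mathcal{B})\subseteq \mathcal{N}_0*\mathcal{S}(\mathcal{B})\subseteq \mathcal{S}(\mathcal{B})*\mathcal{N}_0$.

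For invariance under $\Phi(\mathbb{R})$, I would use that the two ingredients are separately translation invariant: $\mathcal{S}(\mathcal{B})$ is invariant under $\Phi(\mathbb{R})$ (noted just before the statement, since $\mathcal{B}$ consists of translates of a subgroup and translations preserve Bernstein sets), and $\mathcal{N}_0$ is invariant under $\Phi(\mathbb{R})$ because the Lebesgue measure is translation invariant. Then, since translation commutes with $\cup$ and with set-difference, for $t\in\mathbb{R}$ and a member $A=(U\setminus M)\cup N$ with $U\in\mathcal{S}(\mathcal{B})$, $M,N\in\mathcal{N}_0$, the translate is $A+t=\bigl((U+t)\setminus(M+t)\bigr)\cup(N+t)$; as $U+t\in\mathcal{S}(\mathcal{B})$ and $M+t,N+t\in\mathcal{N}_0$, we get $A+t\in\mathcal{S}(\mathcal{B})*\mathcal{N}_0$. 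The same computation handles $\mathcal{N}_0*\mathcal{S}(\mathcal{B})$.

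The non-Lebesgue measurability is the only point requiring a short computation, and it reduces to comparing a member of the family with a Bernstein set via symmetric difference. Take $A\in\mathcal{S}(\mathcal{B})*\mathcal{N}_0$, say $A=(U\setminus M)\cup N$ with $U\in\mathcal{S}(\mathcal{B})$ and $M,N\in\mathcal{N}_0$. A routine check gives $U\setminus A\subseteq M$ and $A\setminus U\subseteq N$, hence $A\,\Delta\,U\subseteq M\cup N\in\mathcal{N}_0$, so $A\,\Delta\,U$ is a null set and $\mu(A\,\Delta\,U)=0$. By Lemma~\ref{Lebesgue31} the set $U$ is a Bernstein set, hence by Corollary~\ref{Lebesgue32} it is not Lebesgue measurable. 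If $A$ were Lebesgue measurable, Lemma~\ref{Lebesgue216} (applied with $A\in\mathcal{L}(\mathbb{R})$, $B=U$, $\mu(A\,\Delta\,U)=0$) would force $U\in\mathcal{L}(\mathbb{R})$, a contradiction; therefore $A\notin\mathcal{L}(\mathbb{R})$. For $\mathcal{N}_0*\mathcal{S}(\mathcal{B})$ one can either repeat this — writing $A=(M\setminus U_1)\cup U_2$ and noting $U_2\subseteq A$ together with $A\setminus U_2\subseteq M$, so $A\,\Delta\,U_2\subseteq M$ — or simply invoke the already established inclusion $\mathcal{N}_0*\mathcal{S}(\mathcal{B})\subseteq\mathcal{S}(\mathcal{B})*\mathcal{N}_0$.

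I do not foresee a genuine obstacle: the statement is a direct consequence of Proposition~\ref{Lebesgue25}, Lemma~\ref{Lebesgue31}, and Lemma~\ref{Lebesgue216}. The only subtlety worth stating carefully is that $\mathcal{N}_0$ is the ideal of \emph{all} Lebesgue-null subsets of $\mathbb{R}$, so it is closed under passing to subsets and under finite (even countable) unions, which is precisely what makes $A\,\Delta\,U$ (resp. $A\,\Delta\,U_2$) a null set and lets Lemma~\ref{Lebesgue216} apply.
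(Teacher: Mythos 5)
Your proposal is correct and follows essentially the same route as the paper's proof: Proposition~\ref{Lebesgue25} for the semigroup structure and inclusions, translation invariance of $\mathcal{S}(\mathcal{B})$ and $\mathcal{N}_0$ for the $\Phi(\mathbb{R})$-invariance, and the symmetric-difference estimate $A\,\Delta\,U\subseteq M\cup N$ combined with Lemma~\ref{Lebesgue216} and Lemma~\ref{Lebesgue31} for non-measurability. The only difference is that you spell out the translation computation and the $\mathcal{N}_0*\mathcal{S}(\mathcal{B})$ case explicitly, which the paper leaves implicit.
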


\begin{proof}
The families are semigroups of sets by Proposition \ref{Lebesgue25} and the inclusions follow from the same proposition. 

Let $A\in \mathcal{S}(\mathcal{B})*\mathcal{N}_0$ and assume that $A\in \mathcal{L}(\mathbb{R})$. Then $A=(U\setminus M)\cup N$, where $U\in \mathcal{S}(\mathcal{B})$ and $M,N\in \mathcal{N}_0$.  Note that $A\setminus U\subseteq N$ and $U\setminus A\subseteq M$ and hence $A\Delta U \subseteq M\cup N$. It follows that $\mu\left[(A\setminus U)\cup (U\setminus A)\right]=\mu(A\Delta U)\leq \mu (M\cup N)=0$ and thus $\mu(A\Delta U)=0$. Lemma \ref{Lebesgue216} indicates that the set $U$ must be measurable in the Lebesgue sense. However, $U$ is a Bernstein set on $\mathbb{R}$, and thus, it is not measurable in the Lebesgue sense. This is a contradiction.

The family $\mathcal{S}(\mathcal{B})*\mathcal{N}_0$ is invariant under the action of the group $\Phi(\mathbb{R})$, since both families $\mathcal{S}(\mathcal{B})$ and $\mathcal{N}_0$ are invariant under the action of the group $\Phi(\mathbb{R})$.
\end{proof}

\begin{lemma}\label{Lebesgue35}
Let $Y$ be a bounded subset of a Bernstein set $A$ in the collection $\mathcal{B}$. Then, $Y$ has the property indicated in Lemma \ref{Lebesgue225}.
\end{lemma}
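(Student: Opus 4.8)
The plan is to exploit the coset structure of the family $\mathcal{B}=\mathbb{R}/B$. Since $Y\subseteq A$ for some $A\in\mathcal{B}$, write $A=B+x$ for a suitable $x\in\mathbb{R}$, so that the property of Lemma \ref{Lebesgue225} amounts to producing a bounded infinite sequence $\{h_k:k\in\mathbb{N}\}$ of real numbers with $\{Y+h_k:k\in\mathbb{N}\}$ pairwise disjoint. The natural choice is to take the $h_k$ in pairwise distinct cosets of $B$: then $A+h_k=B+(x+h_k)$ are pairwise distinct, hence pairwise disjoint, cosets, and since $Y\subseteq A$ the translates $Y+h_k$ are pairwise disjoint too. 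This mirrors the argument for Vitali selectors in Lemma \ref{Lebesgue226}, where translates by elements of the countable dense subgroup $Q$ played the role that translates across distinct cosets of $B$ play here.

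The one point requiring work is to secure infinitely many such $h_k$ inside a \emph{bounded} set. First I would establish the combinatorial fact that every bounded non-degenerate open interval $(a,b)\subseteq\mathbb{R}$ meets infinitely many distinct cosets of $B$. Suppose not; then $(a,b)$ would be contained in a finite union $\bigcup_{i=1}^{m}(B+x_i)$ of cosets. By Lemma \ref{Lebesgue31} such a finite union of elements of $\mathcal{B}$ is a Bernstein set, and by Proposition \ref{Lebesgue229}(i) a Bernstein set has empty interior, contradicting the fact that it contains the open interval $(a,b)$. (Here the hypothesis $\card(\mathcal{B})\geq\aleph_0$ used in Lemma \ref{Lebesgue31} is guaranteed by Lemma \ref{Lebesgue233}, since $\mathbb{R}/B\cong(\mathbb{R},+)$.)

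With this in hand, I would recursively choose, for each $k\in\mathbb{N}$, a point $h_k\in(0,1)$ whose $B$-coset differs from those of $h_1,\dots,h_{k-1}$; this is possible because $(0,1)$ meets infinitely many cosets. The sequence $\{h_k:k\in\mathbb{N}\}$ is then bounded and infinite, the $h_k$ lie in pairwise distinct cosets, hence so do the $x+h_k$, so the cosets $A+h_k=B+(x+h_k)$ are pairwise distinct and therefore pairwise disjoint, and finally $Y+h_k\subseteq A+h_k$ gives that $\{Y+h_k:k\in\mathbb{N}\}$ is disjoint. Since $Y$ is bounded by assumption, this is exactly the property indicated in Lemma \ref{Lebesgue225}. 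The main obstacle is the first step — the empty-interior/Bernstein argument showing a bounded interval hits infinitely many cosets — after which the rest is a routine coset computation.
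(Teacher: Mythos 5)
Your proposal is correct, and it rests on the same underlying idea as the paper's proof — exploiting the pairwise disjointness of the cosets of $B$ to make translates of $Y$ disjoint — but your version is substantially more careful and in fact repairs a real defect in the paper's argument. The paper asserts that ``the family $\{Y+x: x\in \mathbb{R}\}$ is pairwise disjoint'' and then takes $\Lambda$ to be an \emph{arbitrary} bounded countably infinite subset of $\mathbb{R}$; neither step is right as stated, since $Y+x$ and $Y+x'$ can certainly intersect when $x-x'\in B$ (e.g.\ $Y=\{0,b\}$ with $b\in B$, $x'=0$, $x=b$). What is actually needed is precisely what you supply: the translating elements $h_k$ must be chosen in pairwise distinct cosets of $B$, and one must check that infinitely many such choices exist inside a bounded set. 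Your argument for the latter — a bounded interval cannot be covered by finitely many cosets, because a finite union of elements of $\mathcal{B}$ is a Bernstein set by Lemma \ref{Lebesgue31} and hence has empty interior by Proposition \ref{Lebesgue229}(i) — is exactly the missing ingredient, and your appeal to Lemma \ref{Lebesgue233} to justify $\card(\mathcal{B})\geq \aleph_0$ is appropriate. The recursive selection of $h_k\in(0,1)$ and the concluding coset computation are routine and correct. In short, your write-up is a complete proof where the paper's is only a sketch with a false intermediate claim; you should keep the interval-meets-infinitely-many-cosets step explicit.
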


\begin{proof}
Consider a Bernstein set $A\in \mathcal{B}$, and let $Y$ be a bounded subset of $A$. Then, we have $A=B+x_0$, for some $x_0\in \mathbb{R}$, where $B$ is a Bernstein set having an algebraic structure of being a subgroup of $(\mathbb{R}, +)$ as described in Lemma \ref{Lebesgue233}, and $Y+x\subseteq B+x_0+x=B+y$ for $y=x_0+x\in \mathbb{R}$.  In view of the definition of $\mathcal{B}$, the family $\mathcal{B}=\mathbb{R}/B=\{B+x: x\in \mathbb{R}\}$ of all cosets is made by pairwise disjoint sets. Consequently, the family $\{Y+x: x\in \mathbb{R}\}$ consists of pairwise disjoint sets.  Since every infinite set contains an infinitely countable subset \cite{SL}, let $\Lambda$ be an infinitely countable bounded subset of $\mathbb{R}$. The family $\{x_k: x_k \in \Lambda, k=1,2,\cdots\}$ can play the role of $\{h_k: k\in \mathbb{N}\}$ in Lemma \ref{Lebesgue225}. It follows that if $Y\in \dom (\vartheta)$ then $\vartheta (Y)=0$, and this ends the proof.
\end{proof}

\begin{proposition}\label{Lebesgue36}
Let $B$ be a Bernstein set of $\mathbb{R}$ that has the algebraic structure of being a subgroup of $(\mathbb{R},+)$. Any element $U$ of the semigroup $\mathcal{S}(\mathcal{B})$, cannot contain any set of strictly positive Lebesgue measure.
\end{proposition}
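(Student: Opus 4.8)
The plan is to argue by contradiction, using the Banach-type machinery of Theorem~\ref{Lebesgue224} together with Lemmas~\ref{Lebesgue225} and~\ref{Lebesgue35}. Suppose some $U=\bigcup_{i=1}^{n}B_i\in\mathcal{S}(\mathcal{B})$ (with $B_i\in\mathcal{B}$) contains a Lebesgue measurable set $X$ with $\mu(X)>0$. Since $\mu(X\cap[-m,m])\uparrow\mu(X)$ as $m\to\infty$, I may replace $X$ by $X\cap[-m,m]$ for $m$ large enough that the intersection still has positive measure, so there is no loss in assuming $X$ bounded. Because the members of $\mathcal{B}$ are pairwise disjoint, $X=\bigsqcup_{i=1}^{n}(X\cap B_i)$ is a partition of $X$ into finitely many bounded pieces, each $X\cap B_i$ being a bounded subset of the Bernstein set $B_i\in\mathcal{B}$.

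Next I would build a finitely additive translation-invariant extension of Lebesgue measure to all bounded sets. Taking $\vartheta$ to be the restriction of $\mu$ to the translation-invariant ring $\mathcal{L}(\mathbb{R})\cap\mathcal{B}_b(\mathbb{R})$ --- which contains $[0,1)$ and satisfies $\vartheta([0,1))=1$ --- Theorem~\ref{Lebesgue224} produces a finitely additive translation-invariant functional $\eta\colon\mathcal{B}_b(\mathbb{R})\to[0,+\infty)$ extending $\vartheta$. Since $\mathcal{B}_b(\mathbb{R})$ is itself a translation-invariant ring of sets containing $[0,1)$ and $\eta([0,1))=1$, the functional $\eta$ is of the type considered in Theorem~\ref{Lebesgue224}, so Lemma~\ref{Lebesgue225} applies to it. By Lemma~\ref{Lebesgue35}, each $X\cap B_i$ has the disjointness property required in Lemma~\ref{Lebesgue225}; as $X\cap B_i\in\mathcal{B}_b(\mathbb{R})=\dom(\eta)$, Lemma~\ref{Lebesgue225} forces $\eta(X\cap B_i)=0$ for every $i$. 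Finite additivity of $\eta$ over the partition then gives $\eta(X)=\sum_{i=1}^{n}\eta(X\cap B_i)=0$, while $X$ bounded and measurable gives $\eta(X)=\vartheta(X)=\mu(X)>0$ --- a contradiction.

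The only genuinely delicate point is the bookkeeping around $\eta$: one must verify that $\mathcal{B}_b(\mathbb{R})$ is a translation-invariant ring on which $\eta$ is finitely additive and translation-invariant, so that Lemma~\ref{Lebesgue225} may legitimately be invoked with $\vartheta$ replaced by $\eta$, and that the chosen $\vartheta$ meets all hypotheses of Theorem~\ref{Lebesgue224}. I note that a much shorter route is also available, since by Lemma~\ref{Lebesgue31} the set $U$ is a Bernstein set and Theorem~\ref{Lebesgue231} already forces every Lebesgue measurable subset of a Bernstein set to be null; I would nonetheless present the Banach-theoretic argument, as it is the natural companion to Lemma~\ref{Lebesgue35} and transfers verbatim to the finitely additive setting.
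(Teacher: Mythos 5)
Your argument is correct and follows essentially the same route as the paper's own proof: reduce to a bounded positive-measure subset, extend the restriction of $\mu$ on bounded measurable sets to a finitely additive translation-invariant $\eta$ via Theorem~\ref{Lebesgue224}, and kill each piece $X\cap B_i$ using Lemma~\ref{Lebesgue35} together with Lemma~\ref{Lebesgue225}. Your extra care in checking that Lemma~\ref{Lebesgue225} may be applied to $\eta$ rather than $\vartheta$, and your observation that the result also follows at once from Lemma~\ref{Lebesgue31} and Theorem~\ref{Lebesgue231}, are both sound but do not change the substance of the argument.
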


\begin{proof}
Suppose that there exists a Lebesgue measurable subset $Y$ of $\mathbb{R}$ such that $\mu(Y)>0$ and $Y\subseteq U$.  Since $U\in \mathcal{S}(\mathcal{B})$, then $U=\bigcup_{i=1}^n B_i$ with $B_i\in \mathcal{B}$ for each $i=1,2,\cdots, n$.  Since $Y=\bigcup_{r=-\infty}^{\infty}\left(Y\cap [r,r+1)\right)$ and $\mu(Y)>0$, then we have
$\mu \left( Y\cap [r,r+1) \right)>0$ for some $r$. Without loss of generality, we may assume that $Y$ is bounded.  Let $\vartheta$ be the restriction of $\mu$ to the family 
$\mathcal{B}_b(\mathbb{R})\cap \dom(\mu)$. Note that the family $\mathcal{B}_b(\mathbb{R})\cap \dom(\mu)$ is a ring of sets on $\mathbb{R}$. So, there exits a functional $\eta$ as in Theorem \ref{Lebesgue224} extending $\vartheta$ on $\mathcal{B}_b(\mathbb{R})$. Then

\begin{equation}\label{Fourier3}
\begin{split}
0<\vartheta (Y)=\eta (Y)=\eta (Y\cap U)=\eta \left[Y\cap \left(\bigcup_{i=1}^n B_i\right)\right]\\=\eta \left[\bigcup_{i=1}^n \left(Y\cap B_i\right)\right]\leq \sum_{i=1}^n \eta (Y\cap B_i)
\end{split}
\end{equation}

Inequality \ref{Fourier3} implies that $\eta (Y\cap B_i)>0$ for some $i\in \{1,2,\cdots,n\}$. Since $Y\cap B_i$ is a bounded subset of the Bernstein set $B_i\in \mathcal{B}\subseteq \mathcal{S}(\mathcal{B})$, then it has the property described in Lemma \ref{Lebesgue35}. According to Lemma \ref{Lebesgue35}, we must have the equality $\eta (Y\cap B_i)=0$, and this is a contradiction. 
\end{proof}

\begin{corollary}\label{Lebesgue37}
Let $B$ be a Bernstein subset of $\mathbb{R}$ which has an algebraic structure of being a subgroup of $(\mathbb{R},+)$. Any  element of the family $\mathcal{S}(\mathcal{B})*\mathcal{N}_0$, cannot contain any set of strictly positive Lebesgue measure.
\end{corollary}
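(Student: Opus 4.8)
The plan is to reduce the statement to Proposition \ref{Lebesgue36} by discarding the null perturbation. First I would fix an arbitrary element $A\in\mathcal{S}(\mathcal{B})*\mathcal{N}_0$ and, using the definition of the operation $*$, write $A=(U\setminus M)\cup N$ with $U\in\mathcal{S}(\mathcal{B})$ and $M,N\in\mathcal{N}_0$. Arguing by contradiction, I would then suppose there is a Lebesgue measurable set $Y\subseteq A$ with $\mu(Y)>0$.

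The key step is to pass from $Y$ to $Y':=Y\setminus N$. Since $N$ is a null set it is Lebesgue measurable, hence $Y'=Y\setminus N\in\mathcal{L}(\mathbb{R})$, and by additivity $\mu(Y')=\mu(Y)-\mu(Y\cap N)=\mu(Y)>0$ because $\mu(Y\cap N)\le\mu(N)=0$. On the other hand $Y'=Y\setminus N\subseteq A\setminus N=\big((U\setminus M)\cup N\big)\setminus N\subseteq U\setminus M\subseteq U$. Thus $Y'$ is a Lebesgue measurable subset of $U\in\mathcal{S}(\mathcal{B})$ of positive measure, which contradicts Proposition \ref{Lebesgue36}. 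Therefore no such $Y$ exists, and $A$ contains no Lebesgue measurable set of positive measure.

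There is essentially no serious obstacle here: the whole content sits in Proposition \ref{Lebesgue36}, and the corollary only needs the elementary facts that a null set is measurable and that deleting a null set alters neither the measurability nor the measure of a set. The one point worth a word of care is that ``contains a set of positive measure'' is to be read as ``contains a Lebesgue measurable set of positive measure'', matching the phrasing of Proposition \ref{Lebesgue36}; with that reading the argument above is complete. By the inclusion $\mathcal{N}_0*\mathcal{S}(\mathcal{B})\subseteq\mathcal{S}(\mathcal{B})*\mathcal{N}_0$ from Proposition \ref{Lebesgue33}, the same conclusion holds for the semigroup $\mathcal{N}_0*\mathcal{S}(\mathcal{B})$ as well.
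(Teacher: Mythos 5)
Your argument is correct and is exactly the intended route: the paper leaves this as an immediate corollary of Proposition \ref{Lebesgue36}, and your step of replacing a hypothetical positive-measure set $Y\subseteq A=(U\setminus M)\cup N$ by $Y\setminus N\subseteq U$, which is still measurable of positive measure, is the natural way to make that reduction explicit. No gaps.
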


\begin{proof}
Consider $A\in \mathcal{S}(\mathcal{B})*\mathcal{N}_0$. Note that $A=(U\setminus M)\cup N\subseteq U\cup N$, where $U\in \mathcal{S}(\mathcal{B})$ and $M,N\in \mathcal{N}_0$. Assume that there exists $Y\subseteq A$ such that $\mu(Y)>0$. Note that $Y=(Y\cap U)\cup (Y\cap N)$. This implies that $0<\mu(Y)\leq \mu(Y\cap U)+\mu(Y\cap N)=\mu(Y\cap U)$. Hence  the set $Y\cap U$ is a subset of $U$ with a strictly positive Lebesgue measure, which is impossible by Proposition \ref{Lebesgue36}.
\end{proof}
Observe that a contradiction in the proof of Corollary \ref{Lebesgue37} can be obtained by considering Lemma \ref{Lebesgue31} and Theorem \ref{Lebesgue231}.

\begin{theorem}\label{Lebesgue38}
Let $U_k$ be an element of $\mathcal{S}(\mathcal{B})$ and $h_k$ be an element of $\Phi(\mathbb{R})$ for $k=1,2,\cdots,n$, where $n\in \mathbb{N}$. Then, the set $U=\bigcup_{k=1}^n h_k(U_k)$ is not measurable in the Lebesgue sense, and it does not possess the Baire property in $\mathbb{R}$.
\end{theorem}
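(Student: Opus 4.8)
The plan is to observe that $U$ in fact already belongs to the semigroup $\mathcal{S}(\mathcal{B})$, and then to read off the conclusion from the results already established about that family. The key point I would record first is that each translation $h_k$ sends a coset $B+x\in\mathcal{B}$ to the coset $B+(x+t_k)$, so $\mathcal{B}$ — and hence the generated semigroup $\mathcal{S}(\mathcal{B})$ — is invariant under the action of $\Phi(\mathbb{R})$, exactly as noted in the paragraph preceding Lemma~\ref{Lebesgue31}. Consequently $h_k(U_k)\in\mathcal{S}(\mathcal{B})$ for every $k=1,2,\dots,n$.

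Next, since $\mathcal{S}(\mathcal{B})$ is by definition closed under finite unions, the set $U=\bigcup_{k=1}^{n}h_k(U_k)$ is again an element of $\mathcal{S}(\mathcal{B})$. Now Lemma~\ref{Lebesgue31} guarantees that every member of $\mathcal{S}(\mathcal{B})$ is a Bernstein set in $\mathbb{R}$, so $U$ is a Bernstein set (and so is $\mathbb{R}\setminus U$). Applying Theorem~\ref{Lebesgue231}, or directly Corollary~\ref{Lebesgue32}, to $U$ then gives that $U$ is not measurable in the Lebesgue sense and does not possess the Baire property in $\mathbb{R}$, which is precisely the assertion to be proved.

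I do not expect any real obstacle here: the statement is essentially a corollary of the $\Phi(\mathbb{R})$-invariance of $\mathcal{S}(\mathcal{B})$, its closure under finite unions, and Lemma~\ref{Lebesgue31}. If one preferred not to cite the invariance remark, one could instead argue directly as in the proof of Lemma~\ref{Lebesgue31}: each $h_k(U_k)$ is a finite union of pairwise disjoint translates of $B$, so $U$ is such a union as well; then for any uncountable closed $F\subseteq\mathbb{R}$ one has $F\cap U\neq\emptyset$ because $F$ meets every translate of $B$, while $F\setminus U\neq\emptyset$ because (using $\card(\mathcal{B})\geq\aleph_0$) there is a translate of $B$ disjoint from $U$ which $F$ must also meet. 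Either route shows $U$ is a Bernstein set, and the failure of Lebesgue measurability and of the Baire property then follows from Theorem~\ref{Lebesgue231}.
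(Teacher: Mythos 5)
Your proposal is correct and follows essentially the same route as the paper's own proof: both establish that $U\in\mathcal{S}(\mathcal{B})$ by combining the $\Phi(\mathbb{R})$-invariance of $\mathcal{S}(\mathcal{B})$ with its closure under finite unions, and then invoke Lemma~\ref{Lebesgue31} and Theorem~\ref{Lebesgue231} to conclude. Your added direct verification of the Bernstein property is a nice self-contained alternative, but it is not needed and does not change the substance of the argument.
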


\begin{proof}
It is enough to show that the set $U=\bigcup_{k=1}^nh_k(U_k)$ is a Bernstein set on $\mathbb{R}$.  Accordingly, we will show that $U\in\mathcal{S}(\mathcal{B})$.  

Since $U_k\in \mathcal{S}(\mathcal{B})$, then $U_k=\bigcup_{i=1}^m B_{ki}$, where $B_{ki}\in \mathcal{B}$ for $i=1,2,\cdots,m$ and some  $m\in \mathbb{N}$. We can write the following:$$U=\bigcup_{k=1}^n h_k(U_k)=\bigcup_{k=1}^n h_k\left(\bigcup_{i=1}^m B_{ki}\right)=\bigcup_{k=1}^n\left[\bigcup_{i=1}^m h_k(B_{ki})\right]$$ By the invariance of the family $\mathcal{B}$ under the action $\Phi(\mathbb{R})$, each set $h_k(B_{ki})$ is also an element of $\mathcal{B}$, and hence, a Bernstein set. Put $B_k= \bigcup_{i=1}^m h_k(B_{ki})$. Since the family $\mathcal{S}(\mathcal{B})$ is invariant under the action of $\Phi(\mathbb{R})$, it follows that $B_k$ is an element of $\mathcal{S}(\mathcal{B})$.  Now, the set $U=\bigcup_{k=1}^n B_k$ is a finite union of elements of $\mathcal{S}(\mathcal{B})$. Since $\mathcal{S}(\mathcal{B})$ is a semigroup of sets, then we have  $U\in \mathcal{S}(\mathcal{B})$. 

Lemma \ref{Lebesgue31} implies that $U$ is a Bernstein set  and consequently, it is not measurable in the Lebesgue sense, and it does not possess the Baire property in $\mathbb{R}$.
\end{proof}

\subsection{Semigroups related to two  Bernstein subgroups of $(\mathbb{R},+)$}
Let $B_1$ and $B_2$ be Bernstein sets having an algebraic structure of being subgroups of $(\mathbb{R},+)$ as in Theorem \ref{Lebesgue234}, and consider the families $\mathcal{B}_1=\mathbb{R}/B_1$ and  and $\mathcal{B}_2=\mathbb{R}/B_2$ of all disjoint translates (cosets) of $B_1$ and $B_2$, respectively. Let $\mathcal{S}(\mathcal{B}_1)$ and $\mathcal{S}(\mathcal{B}_2)$ be the semigroups of sets generated by $\mathcal{B}_1$ and $\mathcal{B}_2$, respectively. 

\begin{lemma}\label{Lebesgue310}
Let $B_1$ and $B_2$ be Bernstein sets having the algebraic structure of being subgroups of $(\mathbb{R},+)$, and let  $U_1\in \mathcal{S}(\mathcal{B}_1)$ and $U_2\in \mathcal{S}(\mathcal{B}_2)$. Then, the union $U=U_1\cup  U_2$ cannot contain any subset of strictly positive Lebesgue measure.
\end{lemma}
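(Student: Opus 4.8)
The plan is to argue by contradiction, running the Banach-functional argument from the proof of Proposition~\ref{Lebesgue36}, but now splitting the measurable set across the two coset families $\mathcal{B}_1$ and $\mathcal{B}_2$ at once. First I would suppose there is a Lebesgue measurable set $Y\subseteq U=U_1\cup U_2$ with $\mu(Y)>0$. Since $Y=\bigcup_{r\in\mathbb{Z}}\bigl(Y\cap[r,r+1)\bigr)$, some piece $Y\cap[r,r+1)$ has positive measure, so (as in Proposition~\ref{Lebesgue36}) we may assume $Y$ is bounded. Writing $U_1=\bigcup_{i=1}^{n_1}B_{1i}$ with $B_{1i}\in\mathcal{B}_1$ and $U_2=\bigcup_{j=1}^{n_2}B_{2j}$ with $B_{2j}\in\mathcal{B}_2$, the inclusion $Y\subseteq U$ gives
\[
Y=Y\cap U=\Bigl(\bigcup_{i=1}^{n_1}(Y\cap B_{1i})\Bigr)\cup\Bigl(\bigcup_{j=1}^{n_2}(Y\cap B_{2j})\Bigr).
\]

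Next I would let $\vartheta$ be the restriction of $\mu$ to $\mathcal{B}_b(\mathbb{R})\cap\dom(\mu)$ and apply the Banach Theorem (Theorem~\ref{Lebesgue224}) to extend $\vartheta$ to a finitely additive, translation-invariant functional $\eta:\mathcal{B}_b(\mathbb{R})\longrightarrow[0,+\infty)$. Being finitely additive on the ring $\mathcal{B}_b(\mathbb{R})$, $\eta$ is monotone and finitely subadditive, so
\[
0<\mu(Y)=\vartheta(Y)=\eta(Y)\le\sum_{i=1}^{n_1}\eta(Y\cap B_{1i})+\sum_{j=1}^{n_2}\eta(Y\cap B_{2j}),
\]
whence at least one summand is strictly positive; by symmetry assume $\eta(Y\cap B_{1i_0})>0$ for some $i_0$. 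Now $Y\cap B_{1i_0}$ is a bounded subset of the Bernstein set $B_{1i_0}\in\mathcal{B}_1$, where $\mathcal{B}_1$ is the family of pairwise disjoint translates of the Bernstein subgroup $B_1$, so Lemma~\ref{Lebesgue35} (applied verbatim with $\mathcal{B}=\mathcal{B}_1$) shows that $Y\cap B_{1i_0}$ has the property of Lemma~\ref{Lebesgue225}: there is a bounded infinite sequence $\{h_k:k\in\mathbb{N}\}$ with $\{(Y\cap B_{1i_0})+h_k:k\in\mathbb{N}\}$ pairwise disjoint. Since $Y\cap B_{1i_0}\in\dom(\eta)$, Lemma~\ref{Lebesgue225} forces $\eta(Y\cap B_{1i_0})=0$, a contradiction. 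The case $\eta(Y\cap B_{2j_0})>0$ is identical, using that $B_2$ is also a Bernstein subgroup and $\mathcal{B}_2$ a family of pairwise disjoint translates of it. Hence no such $Y$ exists.

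The hard part — really the only subtlety — is that one cannot simply invoke Proposition~\ref{Lebesgue36} twice: the pieces $Y\cap U_1$, $Y\cap U_2$ (equivalently the $Y\cap B_{1i}$ and $Y\cap B_{2j}$) into which $Y$ decomposes need not be Lebesgue measurable, so $\mu$ says nothing about them. Passing to the Banach extension $\eta$, which is defined on \emph{all} bounded subsets of $\mathbb{R}$, is exactly what lets us assign a value to these non-measurable pieces, and the disjoint-bounded-translates argument underlying Lemma~\ref{Lebesgue225} is precisely what still pins each such value to $0$. One should also note explicitly that Lemma~\ref{Lebesgue35} was proved using only that the relevant Bernstein set is a coset of a subgroup and that the coset family is pairwise disjoint, so it applies separately to $\mathcal{B}_1$ and to $\mathcal{B}_2$; the condition $B_1\cap B_2=\{0\}$ from Theorem~\ref{Lebesgue234} is not needed here.
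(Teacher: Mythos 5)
Your proof is correct and follows essentially the same route as the paper's: decompose $U$ into the individual cosets $B_{1i}$ and $B_{2j}$, pass to the Banach extension $\eta$ of Theorem~\ref{Lebesgue224}, locate a piece $Y\cap B_{1i_0}$ (or $Y\cap B_{2j_0}$) of positive $\eta$-value, and contradict Lemma~\ref{Lebesgue35} via Lemma~\ref{Lebesgue225}. The one (worthwhile) refinement is that you apply subadditivity to $\eta$ rather than to $\mu$ on the possibly non-measurable pieces $Y\cap X_i$, which tidies a step the paper leaves implicit.
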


\begin{proof}
Assume that there exists a Lebesgue measurable set $Y$ such that $\mu(Y)>0$ and $Y\subseteq U=U_1\cup U_2$, where $U_1=\bigcup_{i=1}^n B_{1i}$ and $U_2=\bigcup_{k=1}^m B_{2k}$ with $B_{1i}\in \mathcal{B}_1$ and $B_{2k}\in \mathcal{B}_2$ for $i=1,2,\cdots,n$ and $k=1,2,\cdots,m$. Without loss of generality, we may assume that the set $Y$ is bounded. It follows from Proposition \ref{Lebesgue36}, that the set $Y$ cannot be placed entirely in $U_1$ nor in $U_2$. Write $U=\bigcup_{i=1}^{n+m} X_i$, where $X_i=B_{1i}$ for $i=1,2, \cdots, n$ and $X_{n+k}=B_{2k}$ for $k=1,2,\cdots,m$. Accordingly, we have the following:

\begin{equation}\label{Fourier4}
0<\mu(Y)=\mu(Y\cap U)=\mu\left[\bigcup_{i=1}^{n+m} (Y\cap X_i)\right]\leq \sum_{i=1}^{n+m}\mu(Y\cap X_i).
\end{equation}
Inequality \ref{Fourier4} implies that $\mu(Y\cap X_i) >0$ for some index $i \in \{1,2, \cdots, n+m\}$. Since $Y\cap X_i \in \mathcal{B}_b(\mathbb{R})$, let $\vartheta$ be the restriction of $\mu$ to the ring $\mathcal{B}_b(\mathbb{R})\cap \dom (\mu)$. For this $\vartheta$, there exists a functional $\eta$ as in Theorem \ref{Lebesgue224} which is an extension of $\vartheta$. It follows that $0<\mu(Y\cap X_i)=\vartheta (Y\cap X_i)=\eta (Y\cap X_i)$.  We will have two cases to consider:
\begin{enumerate}[$\bullet$]
\item If $X_i\in \mathcal{B}_1$, then we must have $\eta (Y\cap X_i)=0$ by Lemma \ref{Lebesgue35}. 
\item If $X_i\in \mathcal{B}_2$, then we must have $\eta (Y\cap X_i)=0$ by Lemma \ref{Lebesgue35}. 
\end{enumerate}
We conclude that the set $Y$ cannot exist and this ends the proof.
\end{proof}

\begin{theorem}\label{Lebesgue311}
The family  $\mathcal{S}(\mathcal{B}_1) \vee  \mathcal{S}(\mathcal{B}_2)$ is a semigroup of sets on $\mathbb{R}$, which consists of non-Lebesgue measurable sets, and it is invariant under the action of the group  $\Phi(\mathbb{R})$.
\end{theorem}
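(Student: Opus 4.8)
The plan is to verify the three asserted properties separately, leaning on the structural results of Section~2 for the semigroup and invariance statements and on the measure-theoretic machinery of Section~3 for the non-measurability statement.

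For the semigroup structure, the quickest route is to note that $\mathcal{S}(\mathcal{B}_1)$ and $\mathcal{S}(\mathcal{B}_2)$ are semigroups of sets on $\mathbb{R}$ by Example~\ref{Lebesgue22}, so that Lemma~\ref{Lebesgue26} immediately gives that $\mathcal{S}(\mathcal{B}_1)\vee\mathcal{S}(\mathcal{B}_2)$ is a semigroup of sets on $\mathbb{R}$. For the invariance under $\Phi(\mathbb{R})$: exactly as observed for $\mathcal{S}(\mathcal{B})$ in Section~3 (a translate of a coset of $B_i$ is again a coset of $B_i$, hence $\mathcal{B}_i$, and therefore $\mathcal{S}(\mathcal{B}_i)$, is translation-invariant), both $\mathcal{S}(\mathcal{B}_1)$ and $\mathcal{S}(\mathcal{B}_2)$ are invariant under the action of $\Phi(\mathbb{R})$. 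Since every translation $h$ satisfies $h(A\cup A')=h(A)\cup h(A')$, any $U=U_1\cup U_2$ with $U_i\in\mathcal{S}(\mathcal{B}_i)$ is carried by $h$ to $h(U_1)\cup h(U_2)$, and $h(U_i)\in\mathcal{S}(\mathcal{B}_i)$, so $h(U)\in\mathcal{S}(\mathcal{B}_1)\vee\mathcal{S}(\mathcal{B}_2)$.

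The substantive part is to show that an arbitrary element $U=U_1\cup U_2$ of $\mathcal{S}(\mathcal{B}_1)\vee\mathcal{S}(\mathcal{B}_2)$, with $U_i\in\mathcal{S}(\mathcal{B}_i)$, is not Lebesgue measurable, and here I would argue by contradiction. Suppose $U\in\mathcal{L}(\mathbb{R})$. On the one hand, Lemma~\ref{Lebesgue310} asserts that $U$ contains no Lebesgue measurable set of positive measure; applying this to the measurable set $U$ itself forces $\mu(U)=0$, whence $\mu([0,1)\setminus U)=1$. On the other hand, by Lemma~\ref{Lebesgue31} the set $U_1$ is a Bernstein set, and therefore so is $\mathbb{R}\setminus U_1$; since $U_1\subseteq U$ we have $[0,1)\setminus U\subseteq\mathbb{R}\setminus U\subseteq\mathbb{R}\setminus U_1$, so $[0,1)\setminus U$ is a Lebesgue measurable subset of the Bernstein set $\mathbb{R}\setminus U_1$ and hence, by Theorem~\ref{Lebesgue231}, has Lebesgue measure zero. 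This contradicts $\mu([0,1)\setminus U)=1$, so $U\notin\mathcal{L}(\mathbb{R})$.

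I do not expect a genuine obstacle, since the hard inputs---the Banach-functional computation behind Lemma~\ref{Lebesgue310} and the Bernstein-set argument of Lemma~\ref{Lebesgue31}---are already in hand; the content is simply the observation that a measurable element of $\mathcal{S}(\mathcal{B}_1)\vee\mathcal{S}(\mathcal{B}_2)$ would have to be null (it contains no measurable set of positive measure) while at the same time its complement, lying inside the Bernstein set $\mathbb{R}\setminus U_1$, would also be null, which is absurd. I would also note in passing that running the identical argument with ``meager'' in place of ``null set'' and invoking the Baire-property half of Theorem~\ref{Lebesgue231} shows that the elements of $\mathcal{S}(\mathcal{B}_1)\vee\mathcal{S}(\mathcal{B}_2)$ also lack the Baire property in $\mathbb{R}$, in case one wishes to record the stronger conclusion.
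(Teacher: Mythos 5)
Your proposal is correct and follows essentially the same route as the paper: Lemma~\ref{Lebesgue26} for the semigroup structure, translation-invariance of each $\mathcal{S}(\mathcal{B}_i)$ for the $\Phi(\mathbb{R})$-invariance, and a contradiction argument for non-measurability in which Lemma~\ref{Lebesgue310} rules out $\mu(U)>0$. The only (immaterial) difference is in excluding $\mu(U)=0$: the paper observes directly that a null set cannot contain the Bernstein set $U_1$ (a subset of a null set would be measurable), whereas you pass to the complement $[0,1)\setminus U$ inside the Bernstein set $\mathbb{R}\setminus U_1$ and invoke Theorem~\ref{Lebesgue231}; both are one-line appeals to the same Bernstein-set facts.
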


\begin{proof}
The family $\mathcal{S}(\mathcal{B}_1) \vee  \mathcal{S}(\mathcal{B}_2)$ is a semigroup by Lemma \ref{Lebesgue26}. Let $U\in \mathcal{S}(\mathcal{B}_1) \vee  \mathcal{S}(\mathcal{B}_2)$ and assume that $U$ is a Lebesgue measurable set. Then, $U=U_1\cup U_2$, where $U_1\in \mathcal{S}(\mathcal{B}_1)$ and $U_2\in \mathcal{S}(\mathcal{B}_2)$. 
\begin{enumerate}[$\bullet$]
\item Assume that $\mu(U)=0$. Then $\mu(U_1)=\mu(U_2)=0$. This is a contradiction of Theorem \ref{Lebesgue231}, since $U_1$ and $U_2$ are Bernstein sets by Lemma \ref{Lebesgue31}.
\item The inequality $\mu(U)>0$ is impossible by Lemma \ref{Lebesgue310}.
\end{enumerate}
It follows that  the set $U$ is not Lebesgue measurable. 

It is evident that  for any $h\in \Phi(\mathbb{R})$ we have $h(U)=h(U_1\cup U_2)=h(U_1)\cup h(U_2)\in \mathcal{S}(\mathcal{B}_1) \vee  \mathcal{S}(\mathcal{B}_2)$, due to the fact that both  families $\mathcal{S}(\mathcal{B}_1)$ and  $\mathcal{S}(\mathcal{B}_2)$ are invariant under the action of the group $\Phi (\mathbb{R})$.
\end{proof}

\begin{corollary}\label{Lebesgue312}
If $ A\in \mathcal{S}(\mathcal{B}_1) \vee  \mathcal{S}(\mathcal{B}_2)$, then $\dim A=0$, where $\dim $ is the Lebesgue covering dimension.
\end{corollary}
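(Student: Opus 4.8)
The plan is to mimic the dimension-theoretic argument used in the proof of Proposition \ref{Lebesgue34}, now specialised to the line $\mathbb{R}=\mathbb{R}^1$, with Lemma \ref{Lebesgue310} playing the role that Proposition \ref{Lebesgue229}(i) together with Lemma \ref{Lebesgue31} played there. First I would unwind the definition: an element $A\in\mathcal{S}(\mathcal{B}_1)\vee\mathcal{S}(\mathcal{B}_2)$ is of the form $A=U_1\cup U_2$ with $U_1\in\mathcal{S}(\mathcal{B}_1)$ and $U_2\in\mathcal{S}(\mathcal{B}_2)$. Since every element of $\mathcal{B}_i$ is a nonempty Bernstein set (Proposition \ref{Lebesgue229}), $A$ is nonempty, so $\dim A\geq 0$; and since $A$ is a subspace of $\mathbb{R}$, monotonicity of the Lebesgue covering dimension on separable metric spaces gives $\dim A\leq\dim\mathbb{R}=1$. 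Thus it remains only to exclude $\dim A=1$.

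The core step is then a contradiction argument: suppose $\dim A=1$. By the Brouwer dimension theorem (exactly as invoked in the proof of Proposition \ref{Lebesgue34}), there is a nonempty open set $O\subseteq\mathbb{R}$ with $O\subseteq A$. Any nonempty open subset of $\mathbb{R}$ contains a nondegenerate open interval and therefore has strictly positive Lebesgue measure. Hence $A=U_1\cup U_2$ would contain a subset of positive Lebesgue measure, which directly contradicts Lemma \ref{Lebesgue310}. Therefore $\dim A\neq 1$, and combined with $0\leq\dim A\leq 1$ we conclude $\dim A=0$.

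I do not expect a genuine obstacle here: the statement is essentially a one-line consequence of Lemma \ref{Lebesgue310}, whose proof already carries all the weight (via Theorem \ref{Lebesgue234}, Proposition \ref{Lebesgue36}, Lemma \ref{Lebesgue35}, and the Banach extension of Theorem \ref{Lebesgue224}). The only points requiring a little care are bookkeeping conventions: that $\dim$ denotes the Lebesgue covering dimension, so that both Brouwer's theorem and subspace monotonicity are available on subspaces of $\mathbb{R}$; and that $A\neq\emptyset$, which legitimises the lower bound $\dim A\geq 0$ (the degenerate value $\dim\emptyset=-1$ never arises). As an alternative to routing through Brouwer's theorem, one could argue purely topologically — Lemma \ref{Lebesgue310} forces $\inte(A)=\emptyset$, hence $A$ contains no nondegenerate interval, hence $\mathbb{R}\setminus A$ is dense, hence every point of $A$ has arbitrarily small relatively clopen neighbourhoods, so $A$ is zero-dimensional — but I would keep the Brouwer-based phrasing to stay parallel with Proposition \ref{Lebesgue34}.
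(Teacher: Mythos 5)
Your argument is exactly the paper's: assume $\dim A=1$, invoke the Brouwer Dimension Theorem to obtain a nonempty open set $O\subseteq A$, note that $O$ has positive Lebesgue measure, and contradict Lemma \ref{Lebesgue310}. The additional bookkeeping you supply (nonemptiness giving $\dim A\geq 0$, subspace monotonicity giving $\dim A\leq 1$) is correct and merely makes explicit what the paper leaves implicit.
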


\begin{proof}
Assume that there is an element $A$ in $\mathcal{S}(\mathcal{B}_1) \vee  \mathcal{S}(\mathcal{B}_2)$ such that $\dim A=1$. Note that $A=U_1\cup U_2$, where $U_1\in \mathcal{S}(\mathcal{B}_1)$ and $U_2\in \mathcal{S}(\mathcal{B}_2)$. Recall that each subset of $\mathbb{R}$ with $\dim A=1$ contains a non-empty open subset of $\mathbb{R}$  \cite{ENG}. It follows that there exists a non-empty open set $O$ in $\mathbb{R}$ such that $O\subseteq A$.  Since every non-empty open set has a positive Lebesgue measure \cite{BAR}, it follows that $\mu (O)>0$. This is a contradiction to  Lemma \ref{Lebesgue310}, and consequently, since $A\neq \emptyset$, we must have $\dim A=0$.
\end{proof}

\begin{theorem}\label{Lebesgue313}
The families  $\mathcal{N}_0*\left(\mathcal{S}(\mathcal{B}_1) \vee  \mathcal{S}(\mathcal{B}_2)\right) $ and
$\left(\mathcal{S}(\mathcal{B}_1) \vee  \mathcal{S}(\mathcal{B}_2)\right) *\mathcal{N}_0$ are semigroups of sets on $\mathbb{R}$, and they satisfy the inclusions: $\mathcal{S}(\mathcal{B}_1) \vee  \mathcal{S}(\mathcal{B}_2) \subseteq \mathcal{N}_0*\left(\mathcal{S}(\mathcal{B}_1) \vee  \mathcal{S}(\mathcal{B}_2)\right) \subseteq \left(\mathcal{S}(\mathcal{B}_1) \vee  \mathcal{S}(\mathcal{B}_2)\right)*\mathcal{N}_0$. They are invariant under the action of the group $\Phi(\mathbb{R})$, and they consist of sets which are not measurable in the Lebesgue sense.
\end{theorem}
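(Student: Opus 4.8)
The plan is to deduce this statement almost entirely from the structural results already in hand, since it is a combination of Theorem \ref{Lebesgue311} with the general machinery of Proposition \ref{Lebesgue25}. First I would record that $\mathcal{S}(\mathcal{B}_1)\vee\mathcal{S}(\mathcal{B}_2)$ is a semigroup of sets on $\mathbb{R}$ by Theorem \ref{Lebesgue311}, and that $\mathcal{N}_0$ is an ideal of sets on $\mathbb{R}$ (in fact a $\sigma$-ideal). Applying Proposition \ref{Lebesgue25} with $\mathcal{S}=\mathcal{S}(\mathcal{B}_1)\vee\mathcal{S}(\mathcal{B}_2)$ and $\mathcal{I}=\mathcal{N}_0$ then immediately yields that $\mathcal{N}_0*(\mathcal{S}(\mathcal{B}_1)\vee\mathcal{S}(\mathcal{B}_2))$ and $(\mathcal{S}(\mathcal{B}_1)\vee\mathcal{S}(\mathcal{B}_2))*\mathcal{N}_0$ are semigroups of sets on $\mathbb{R}$, together with the chain of inclusions $\mathcal{S}(\mathcal{B}_1)\vee\mathcal{S}(\mathcal{B}_2)\subseteq\mathcal{N}_0*(\mathcal{S}(\mathcal{B}_1)\vee\mathcal{S}(\mathcal{B}_2))\subseteq(\mathcal{S}(\mathcal{B}_1)\vee\mathcal{S}(\mathcal{B}_2))*\mathcal{N}_0$. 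This settles the algebraic part with no further work.

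For invariance under $\Phi(\mathbb{R})$, I would take an arbitrary element of $(\mathcal{S}(\mathcal{B}_1)\vee\mathcal{S}(\mathcal{B}_2))*\mathcal{N}_0$, say $A=(U\setminus M)\cup N$ with $U\in\mathcal{S}(\mathcal{B}_1)\vee\mathcal{S}(\mathcal{B}_2)$ and $M,N\in\mathcal{N}_0$, and a translation $h\in\Phi(\mathbb{R})$. Since $h$ is a bijection of $\mathbb{R}$ that commutes with unions and set differences, $h(A)=(h(U)\setminus h(M))\cup h(N)$; here $h(U)\in\mathcal{S}(\mathcal{B}_1)\vee\mathcal{S}(\mathcal{B}_2)$ by the invariance part of Theorem \ref{Lebesgue311}, and $h(M),h(N)\in\mathcal{N}_0$ because $\mathcal{N}_0$ is translation invariant, so $h(A)$ lies in the same family. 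The argument for $\mathcal{N}_0*(\mathcal{S}(\mathcal{B}_1)\vee\mathcal{S}(\mathcal{B}_2))$ is identical, exchanging the roles of the two factors.

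Finally, for non-Lebesgue measurability, by the inclusion chain it suffices to treat the larger family $(\mathcal{S}(\mathcal{B}_1)\vee\mathcal{S}(\mathcal{B}_2))*\mathcal{N}_0$. Writing $A=(U\setminus M)\cup N$ as above, I would note $A\setminus U\subseteq N$ and $U\setminus A\subseteq M$, hence $A\Delta U\subseteq M\cup N$ has Lebesgue measure zero. If $A$ were Lebesgue measurable, Lemma \ref{Lebesgue216} would force $U$ to be Lebesgue measurable; but $U\in\mathcal{S}(\mathcal{B}_1)\vee\mathcal{S}(\mathcal{B}_2)$, and Theorem \ref{Lebesgue311} asserts that every such set is non-measurable, a contradiction. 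I do not anticipate a genuine obstacle: the entire statement is a corollary of Theorem \ref{Lebesgue311}, Proposition \ref{Lebesgue25}, and Lemma \ref{Lebesgue216}. The only substantive content sits upstream, in the proof of Theorem \ref{Lebesgue311}; if I were to point at the \emph{hard part}, it is the hidden dependency on Lemma \ref{Lebesgue310}, namely that a union $U_1\cup U_2$ with $U_1\in\mathcal{S}(\mathcal{B}_1)$ and $U_2\in\mathcal{S}(\mathcal{B}_2)$ contains no set of positive Lebesgue measure, which was established via the Banach-type argument of Lemma \ref{Lebesgue35}.
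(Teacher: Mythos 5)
Your proposal is correct and follows essentially the same route as the paper: Proposition \ref{Lebesgue25} for the semigroup structure and inclusions, invariance from Theorem \ref{Lebesgue311} together with the translation invariance of $\mathcal{N}_0$, and non-measurability via the symmetric-difference argument of Proposition \ref{Lebesgue33} combined with Lemma \ref{Lebesgue216} and Theorem \ref{Lebesgue311}. Your observation that it suffices to treat the largest family in the inclusion chain, and that the real content lies upstream in Lemma \ref{Lebesgue310}, is accurate.
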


\begin{proof}
The families are semigroups by Proposition \ref{Lebesgue25}. The inclusions follow from the same proposition. The family $\left(\mathcal{S}(\mathcal{B}_1) \vee  \mathcal{S}(\mathcal{B}_2)\right) *\mathcal{N}_0$ is invariant under the action of the group $\Phi(\mathbb{R})$   by Theorem \ref{Lebesgue311} and the fact that the collection $\mathcal{N}_0$ is invariant under the action of the group $\Phi(\mathbb{R})$. The proof that each element of the family $\left(\mathcal{S}(\mathcal{B}_1) \vee  \mathcal{S}(\mathcal{B}_2)\right)*\mathcal{N}_0$ is not measurable in the Lebesgue sense goes in the same line as in Proposition \ref{Lebesgue33} taking into consideration Theorem \ref{Lebesgue311}.
\end{proof}

The family $\mathcal{S}(\mathcal{B}_1)*\mathcal{S}(\mathcal{B}_1)$ does not need to be  a semigroup of sets, but the following statement shows that, it consists of elements, which are not measurable in the Lebesgue sense.

\begin{corollary}\label{Lebesgue314}
Each element of the family $\mathcal{S}(\mathcal{B}_1) * \mathcal{S}(\mathcal{B}_2)$ is not measurable in the Lebesgue sense.
\end{corollary}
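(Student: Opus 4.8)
The plan is to show that an arbitrary element $A \in \mathcal{S}(\mathcal{B}_1) * \mathcal{S}(\mathcal{B}_2)$ cannot be Lebesgue measurable by the same outer-measure/symmetric-difference strategy used in Proposition \ref{Lebesgue33}, but now routed through Lemma \ref{Lebesgue310} instead of Lemma \ref{Lebesgue216}. Write $A = (U_1 \setminus U_2') \cup U_2''$ with $U_1 \in \mathcal{S}(\mathcal{B}_1)$ and $U_2', U_2'' \in \mathcal{S}(\mathcal{B}_2)$, and suppose for contradiction that $A \in \mathcal{L}(\mathbb{R})$. The first step is to observe that $A$ is sandwiched between sets built from $U_1, U_2', U_2''$: one has $A \subseteq U_1 \cup U_2''$ and $\mathbb{R} \setminus A = (\mathbb{R} \setminus A)$, which I would unwind to $\mathbb{R}\setminus A \subseteq (\mathbb{R}\setminus U_1) \cup U_2'$, equivalently $\mathbb{R} \setminus A \supseteq$ nothing useful directly — so instead I work with the complement of $A$ and show it too avoids containing any set of positive measure.

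Concretely, the key step is: if $A$ were measurable with $\mu(A) > 0$, then $A$ would contain a measurable set of positive measure (namely a suitable bounded piece of $A$ itself), and since $A \subseteq U_1 \cup U_2''$ with $U_1 \in \mathcal{S}(\mathcal{B}_1)$ and $U_2'' \in \mathcal{S}(\mathcal{B}_2)$, Lemma \ref{Lebesgue310} forbids $U_1 \cup U_2''$ from containing any set of positive measure — contradiction. So we must have $\mu(A) = 0$, i.e.\ $A \in \mathcal{N}_0$. Then $\mathbb{R} \setminus A$ is measurable with full measure, hence $\mathbb{R}\setminus A$ contains a bounded measurable set of positive measure. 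Now I would note $\mathbb{R} \setminus A \subseteq (\mathbb{R}\setminus U_1) \cup U_2' \subseteq (\mathbb{R}\setminus B_{1}) \cup U_2'$ for any single coset $B_1 \subseteq U_1$; but $\mathbb{R}\setminus B_1$ is itself a Bernstein set (it is the complement of a Bernstein set, by Proposition \ref{Lebesgue229}), so I need a version of Lemma \ref{Lebesgue310} covering $(\text{complement of a } \mathcal{B}_1\text{-set}) \cup (\mathcal{B}_2\text{-set})$. The cleanest route is to invoke Corollary \ref{Lebesgue314}'s own ingredients directly: $\mathbb{R}\setminus A = (\mathbb{R}\setminus U_2'') \cap ((\mathbb{R}\setminus U_1) \cup U_2')$, and since $\mathbb{R}\setminus U_1$ is a Bernstein set by Lemma \ref{Lebesgue31}, and one checks that the Banach-functional argument of Lemma \ref{Lebesgue310} applies verbatim to bounded subsets of $\mathbb{R}\setminus U_1$ (such subsets are bounded subsets of the Bernstein set $\mathbb{R}\setminus B_1$ once intersected, and Lemma \ref{Lebesgue35} applies to bounded subsets of cosets in $\mathcal{B}_1$ — here one uses that the $\mathcal{B}_1$-translates tile $\mathbb{R}$, so $\mathbb{R}\setminus U_1$ is a union of cosets in $\mathcal{B}_1$), we again reach a contradiction with $\mathbb{R}\setminus A$ containing a set of positive measure.

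I would therefore carry it out in this order: (1) fix $A = (U_1\setminus U_2')\cup U_2''$ and the containments $A \subseteq U_1 \cup U_2''$ and $\mathbb{R}\setminus A \subseteq (\mathbb{R}\setminus U_1)\cup U_2'$; (2) rule out $\mu(A)>0$ using Lemma \ref{Lebesgue310} applied to $U_1 \cup U_2''$; (3) rule out $\mu(A)=0$ by applying the Banach-functional argument (Theorem \ref{Lebesgue224}, Lemma \ref{Lebesgue225}, Lemma \ref{Lebesgue35}) to $\mathbb{R}\setminus A \subseteq (\mathbb{R}\setminus U_1)\cup U_2'$, after first establishing the auxiliary fact that $\mathbb{R}\setminus U_1$ is a (countable, disjoint) union of cosets in $\mathcal{B}_1$ so that bounded subsets of it inherit the disjoint-translate property of Lemma \ref{Lebesgue35}; (4) conclude $A \notin \mathcal{L}(\mathbb{R})$.

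The main obstacle is step (3): Lemma \ref{Lebesgue310} as stated is only about $U_1 \cup U_2$ with $U_i \in \mathcal{S}(\mathcal{B}_i)$, not about complements, so I need the observation that since $B_1$ is a subgroup and $\mathcal{B}_1$ is its full set of (pairwise disjoint) cosets covering $\mathbb{R}$, the complement $\mathbb{R}\setminus U_1$ of a finite union of cosets is again a union (necessarily infinite, since $\card(\mathcal{B}_1)\geq\aleph_0$) of cosets in $\mathcal{B}_1$; hence any bounded subset $Y$ of $\mathbb{R}\setminus U_1$ splits into bounded pieces each lying in a single coset, and Lemma \ref{Lebesgue35} forces the Banach extension $\eta$ to vanish on each piece, so $\eta(Y)=0$. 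Combined with the $\mathcal{B}_2$-side (Lemma \ref{Lebesgue35} again), this gives $\eta(Z)=0$ for any bounded $Z \subseteq (\mathbb{R}\setminus U_1)\cup U_2'$, contradicting $\mu(Z)>0$ for a bounded positive-measure $Z \subseteq \mathbb{R}\setminus A$. Everything else is bookkeeping with finite unions and the already-established invariance and Bernstein properties.
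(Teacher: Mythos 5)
Your step for ruling out $\mu(A)>0$ is exactly the paper's: $A\subseteq U_1\cup U_2''\in \mathcal{S}(\mathcal{B}_1)\vee\mathcal{S}(\mathcal{B}_2)$, and Lemma \ref{Lebesgue310} forbids that union from containing a set of positive measure. The problem is your step (3), where you rule out $\mu(A)=0$ by passing to $\mathbb{R}\setminus A\subseteq(\mathbb{R}\setminus U_1)\cup U_2'$ and arguing that $(\mathbb{R}\setminus U_1)\cup U_2'$ cannot contain a bounded set of positive measure via the Banach functional. This argument is broken: $\mathbb{R}\setminus U_1$ is the union of \emph{infinitely many} (indeed continuum many, since $\mathbb{R}/B_1\cong(\mathbb{R},+)$) cosets of $B_1$, so a bounded $Y\subseteq\mathbb{R}\setminus U_1$ splits into infinitely many pieces $Y\cap(B_1+x)$. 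The functional $\eta$ of Theorem \ref{Lebesgue224} is only \emph{finitely} additive, so $\eta$ vanishing on each piece does not give $\eta(Y)=0$. Taken at face value your argument would apply equally to $\mathbb{R}=\bigcup\{B_1+x\}$ itself and yield $\eta([0,1))=0$, contradicting $\eta([0,1))=1$; this is the same phenomenon the paper flags when noting that Theorem \ref{Lebesgue45} fails for countable unions of Vitali selectors. Lemma \ref{Lebesgue35} is a statement about bounded subsets of a \emph{single} coset and does not transfer to complements of finite unions of cosets.

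The case $\mu(A)=0$ is dispatched much more cheaply, and this is what the paper does: since $U_2''\subseteq A$, if $A$ were a null set then $U_2''$ would have outer measure zero and hence be Lebesgue measurable; but $U_2''\in\mathcal{S}(\mathcal{B}_2)$ is a Bernstein set by Lemma \ref{Lebesgue31}, hence nonmeasurable by Theorem \ref{Lebesgue231} --- a contradiction. Replacing your step (3) by this observation (and keeping your step (2)) gives precisely the paper's proof; no analysis of $\mathbb{R}\setminus A$ or of complements of cosets is needed.
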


\begin{proof}
Let $A\in \mathcal{S}(\mathcal{B}_1) * \mathcal{S}(\mathcal{B}_2)$ and assume that $A$ is a Lebesgue measurable set. Note that  $A=(U_1\setminus U_2)\cup U_3$ for some $U_1 \in \mathcal{S}(\mathcal{B}_1)$ and $U_2, U_3\in \mathcal{S}(\mathcal{B}_2)$. Since $U_3$ is  Bernstein set and $U_3\subseteq A$, then the  set $A$ cannot have the Lebesgue measure zero. Assume that $\mu (A)>0$. It follows that  $A=(U_1\setminus U_2)\cup U_3\subseteq U_1\cup U_3\in \mathcal{S}(\mathcal{B}_1)\vee \mathcal{S}(\mathcal{B}_2)$. By Lemma \ref{Lebesgue310}, the set $U_1\cup U_2$ can not contain any set of strictly positive Lebesgue measure.
\end{proof}

\begin{question}\label{Lebesgue316}
\emph{Is each element $U$ in the families  $\mathcal{S}(\mathcal{B}_1)\vee \mathcal{S}(\mathcal{B}_2)$  and  $\mathcal{S}(\mathcal{B}_1)* \mathcal{S}(\mathcal{B}_2)$ without the Baire property in $\mathbb{R}$?}
\end{question} 

\section{Semigroups of non-Lebesgue measurable sets generated by Vitali selectors}
Let $\mathcal{C}$ be the family of all countable dense subgroups of $(\mathbb{R}, +)$.  The following statement shows that, each finite union of Vitali selectors is not measurable in the Lebesgue sense, and it generalizes Theorem \ref{Lebesgue223}.

\begin{theorem}[\cite{VH}]\label{Lebesgue41}
Let $U=\bigcup_{i=1}^n V_i$ be a finite union of Vitali selectors of $\mathbb{R}$, where $V_i\in \mathcal{V}(Q_i)$ and each $Q_i$ is an element of $\mathcal{C}$ for $i=1,2,\cdots,n$. Then, the set $U$ is not measurable in the Lebesgue sense.
\end{theorem}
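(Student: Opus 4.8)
The plan is to argue by contradiction, following the scheme of Proposition~\ref{Lebesgue36} but with Vitali selectors in place of cosets of a Bernstein subgroup, and using the Banach Theorem~\ref{Lebesgue224} together with Lemmas~\ref{Lebesgue225} and~\ref{Lebesgue226}. Suppose $U=\bigcup_{i=1}^n V_i\in\mathcal{L}(\mathbb{R})$. First I would rule out $\mu(U)=0$: each $V_i$ is a Vitali selector, hence by Proposition~\ref{Lebesgue219}(iii) it is not a null set, so (being a subset of the null set $U$ would make it measurable of measure zero) $\mu^{*}(V_i)\le\mu(U)=0$ gives a contradiction. Thus $\mu(U)>0$. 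Writing $U=\bigcup_{m\in\mathbb{Z}}\bigl(U\cap[m,m+1)\bigr)$ and using countable additivity, there is $m$ with $\mu\bigl(U\cap[m,m+1)\bigr)>0$; set $Y=U\cap[m,m+1)$, a bounded Lebesgue measurable set with $Y\subseteq U$ and $\mu(Y)>0$.

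Next I would invoke the Banach Theorem. Let $\vartheta$ be the restriction of $\mu$ to $\mathcal{B}_b(\mathbb{R})\cap\dom(\mu)$; this is a translation invariant ring of bounded sets containing $[0,1)$ with $\vartheta([0,1))=1$, so there is a finitely additive translation invariant functional $\eta:\mathcal{B}_b(\mathbb{R})\longrightarrow[0,+\infty)$ extending $\vartheta$. Since $Y\subseteq U=\bigcup_{i=1}^n V_i$, we have $Y=\bigcup_{i=1}^n (Y\cap V_i)$, and finite additivity of $\eta$ (hence finite subadditivity) yields
\[
0<\mu(Y)=\vartheta(Y)=\eta(Y)=\eta\left(\bigcup_{i=1}^n (Y\cap V_i)\right)\le\sum_{i=1}^n \eta(Y\cap V_i),
\]
so $\eta(Y\cap V_i)>0$ for at least one index $i$.

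Finally I would extract the contradiction. The set $Y\cap V_i$ is a bounded subset of the Vitali selector $V_i$, so by Lemma~\ref{Lebesgue226} it has the property of Lemma~\ref{Lebesgue225}: there is a bounded infinite sequence $\{h_k:k\in\mathbb{N}\}$ in $\mathbb{R}$ (one may take distinct elements of $Q_i$ in a bounded interval, using density of $Q_i$ and Proposition~\ref{Lebesgue219}(i)) such that $\{(Y\cap V_i)+h_k:k\in\mathbb{N}\}$ is pairwise disjoint. Applying the argument of Lemma~\ref{Lebesgue225} to the extension $\eta$ — all the sets $(Y\cap V_i)+h_k$ lie in one bounded set $B$ of finite $\eta$-measure, and for every $N$ we get $N\,\eta(Y\cap V_i)=\sum_{k=1}^N \eta((Y\cap V_i)+h_k)=\eta\!\left(\bigcup_{k=1}^N((Y\cap V_i)+h_k)\right)\le\eta(B)$ — forces $\eta(Y\cap V_i)=0$, contradicting $\eta(Y\cap V_i)>0$. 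Hence $U\notin\mathcal{L}(\mathbb{R})$.

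The routine parts are the reduction to a bounded piece of positive measure and the finite-additivity bookkeeping. The one delicate point, used silently in Proposition~\ref{Lebesgue36}, is that the vanishing conclusion must be applied to the Banach extension $\eta$ rather than to $\vartheta$: the set $Y\cap V_i$ is typically non-measurable, so it lies outside $\dom(\vartheta)$, but it lies in $\dom(\eta)=\mathcal{B}_b(\mathbb{R})$, and the proof of Lemma~\ref{Lebesgue225} uses only finite additivity, translation invariance, and finiteness on bounded sets, all of which $\eta$ possesses; I would make this explicit. I would also remark that, in contrast with Theorem~\ref{Lebesgue223}, here the $V_i$ may be attached to distinct subgroups $Q_i\in\mathcal{C}$, yet this creates no obstacle, since Lemma~\ref{Lebesgue226} is stated for an arbitrary Vitali selector and so applies to each $Y\cap V_i$ irrespective of which $Q_i$ it is related to.
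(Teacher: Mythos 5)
Your proof is correct and follows essentially the same route the paper takes: it is the Banach-functional argument (Theorem~\ref{Lebesgue224} with Lemmas~\ref{Lebesgue225} and~\ref{Lebesgue226}) that the paper gives for the stronger Theorem~\ref{Lebesgue45}, supplemented by the easy reduction that a measurable $U$ of measure zero would force each $V_i$ to be null, contradicting Proposition~\ref{Lebesgue219}(iii). Your explicit remark that the vanishing conclusion of Lemma~\ref{Lebesgue225} must be applied to the extension $\eta$ (whose domain is all of $\mathcal{B}_b(\mathbb{R})$) rather than to $\vartheta$ is a point the paper uses silently, and it is handled correctly.
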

For $n=2$, Theorem \ref{Lebesgue41} implies the following statement.

\begin{corollary}\label{Lebesgue42}
Suppose that $V_1$ and $V_2$ are Vitali selectors related to elements $Q_1$ and $Q_2$ respectively in $\mathcal{C}$. Then, at least one of the sets $V_1\setminus V_2, V_2\setminus V_1$ and $V_1\cap V_2$ must be a non measurable set in the Lebesgue sense.
\end{corollary}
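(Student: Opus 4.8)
The plan is to obtain the statement as a direct consequence of Theorem~\ref{Lebesgue41} applied with $n=2$. First I would record the set-theoretic identity
\[
V_1\cup V_2 = (V_1\setminus V_2)\cup (V_1\cap V_2)\cup (V_2\setminus V_1),
\]
which holds for arbitrary sets (in fact the three pieces on the right are pairwise disjoint, though disjointness is not needed for the argument). Thus $V_1\cup V_2$ is a finite union of the three sets $V_1\setminus V_2$, $V_2\setminus V_1$, and $V_1\cap V_2$.

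Next I would argue by contradiction. Suppose all three of $V_1\setminus V_2$, $V_2\setminus V_1$, and $V_1\cap V_2$ belong to $\mathcal{L}(\mathbb{R})$. Since $\mathcal{L}(\mathbb{R})$ is a $\sigma$-algebra of sets on $\mathbb{R}$, in particular it is closed under finite unions, so the above identity would force $V_1\cup V_2\in\mathcal{L}(\mathbb{R})$. On the other hand, $V_1$ is a Vitali selector related to $Q_1\in\mathcal{C}$ and $V_2$ is a Vitali selector related to $Q_2\in\mathcal{C}$, so Theorem~\ref{Lebesgue41} (with $n=2$) gives that $V_1\cup V_2$ is not measurable in the Lebesgue sense. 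This is the desired contradiction, and hence at least one of $V_1\setminus V_2$, $V_2\setminus V_1$, $V_1\cap V_2$ fails to be Lebesgue measurable.

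There is essentially no obstacle here: the only ingredients are the elementary decomposition of a union of two sets into its three ``pieces'', the closure of $\mathcal{L}(\mathbb{R})$ under finite unions (already recorded in the preliminaries), and the already-established Theorem~\ref{Lebesgue41}. If one wished to phrase the conclusion more sharply, one could additionally note that the same reasoning shows that \emph{at least one} of the three sets is non-measurable while the other two may individually be measurable or not; this is consistent with, for instance, the case $Q_1=Q_2$ and $V_1\cap V_2$ being a Vitali selector, but no such refinement is required for the stated corollary.
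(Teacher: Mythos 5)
Your proposal is correct and follows exactly the paper's own argument: decompose $V_1\cup V_2$ into the three pairwise disjoint pieces, note that measurability of all three would force $V_1\cup V_2\in\mathcal{L}(\mathbb{R})$ by closure under finite unions, and contradict Theorem~\ref{Lebesgue41} with $n=2$. No gaps.
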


\begin{proof}
It follows from Theorem \ref{Lebesgue41} that the set  $V_1\cup V_2$ is not measurable in the Lebesgue sense. Note that $V_1\cup V_2=(V_1\setminus V_2)\cup (V_2\setminus V_1)\cup (V_1\cap V_2)$ and sets in this union are disjoint. If all the sets in this union are Lebesgue measurable, then $V_1\cup V_2$ will be a Lebesgue measurable set, and this will be a contradiction.
\end{proof}
A result similar to Theorem \ref{Lebesgue41} also holds in the case of the Baire property, as it is proved in \cite{CN4}. If $Q\in \mathcal{C}$, then  we denote by $\mathcal{V}(Q)$ the family of all Vitali selectors related to $Q$, and $\mathcal{V}_1(Q)$ the semigroup generated by  $\mathcal{V}(Q)$; that is $\mathcal{V}_1(Q)=\{\bigcup_{i=1}^n V_i: V_i\in \mathcal{V}(Q), n\in \mathbb{N}\}$, i.e. the collection of all finite unions of elements of $\mathcal{V}(Q)$. The following statement shows that each topological group isomorphism maps Vitali selectors of $\mathbb{R}$ to Vitali selectors of $\mathbb{R}$, not necessarily related to the same subgroups of $(\mathbb{R}, +)$.

\begin{theorem}[\cite{VH}]\label{Lebesgue43}
Let $Q$ be a countable dense subgroup $(\mathbb{R},+)$ and let $V\in \mathcal{V}(Q)$. If $h: (\mathbb{R}, +) \longrightarrow (\mathbb{R}, +)$ is a topological group isomorphism, then $P=h(Q)\in \mathcal{C}$ and $W=h(V) \in \mathcal{V}(P)$.
\end{theorem}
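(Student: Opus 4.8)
The plan is to treat the two assertions of the theorem separately: first that $P=h(Q)\in\mathcal{C}$, and then that $W=h(V)$ meets each coset of $P$ in exactly one point, i.e. $W\in\mathcal{V}(P)$.

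For the first assertion I would argue as follows. Since $h$ is a group homomorphism, the image $h(Q)$ of the subgroup $Q$ is again a subgroup of $(\mathbb{R},+)$; since $h$ is injective and $Q$ is countable, $h(Q)$ is countable; and since $h$ is a homeomorphism of $\mathbb{R}$ onto $\mathbb{R}$ and $Q$ is dense, the set $h(Q)$ is dense in $\mathbb{R}$. (Alternatively, one may invoke the classical fact that a topological group isomorphism of $(\mathbb{R},+)$ onto itself has the form $h(x)=ax$ with $a\neq 0$, so $P=aQ$ and multiplication by $a$ is a homeomorphism of $\mathbb{R}$ preserving density and cardinality.) Hence $P\in\mathcal{C}$, and the relation $\mathcal{R}_P$ given by $x\mathcal{R}_P y\iff x-y\in P$ partitions $\mathbb{R}$ into the cosets of $P$, which we write as $\mathbb{R}/P$.

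The heart of the argument is to identify the $\mathcal{R}_P$-classes with the $h$-images of the $\mathcal{R}_Q$-classes. Each class $E_\alpha(Q)\in\mathbb{R}/Q$ equals $Q+t$ for any $t\in E_\alpha(Q)$, and since $h$ is additive, $h(E_\alpha(Q))=h(Q+t)=h(Q)+h(t)=P+h(t)$, which is precisely the $\mathcal{R}_P$-class of $h(t)$. Because $h$ is a bijection of $\mathbb{R}$ onto $\mathbb{R}$, it carries the partition $\mathbb{R}/Q$ onto a partition of $\mathbb{R}$ whose blocks are exactly the cosets $P+h(t)$; these exhaust $\mathbb{R}/P$, so the assignment $E_\alpha(Q)\mapsto h(E_\alpha(Q))$ is a bijection of $\mathbb{R}/Q$ onto $\mathbb{R}/P$. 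With this in hand, for each $\alpha\in I$ the injectivity of $h$ gives $W\cap h(E_\alpha(Q))=h(V)\cap h(E_\alpha(Q))=h\bigl(V\cap E_\alpha(Q)\bigr)$, and injectivity again yields $\card\bigl(h(V\cap E_\alpha(Q))\bigr)=\card\bigl(V\cap E_\alpha(Q)\bigr)=1$. Thus $W$ meets every $\mathcal{R}_P$-class in exactly one point, so $W\in\mathcal{V}(P)$.

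I do not expect a serious obstacle here; the only step needing a little care is the bookkeeping above, namely that $\{h(E_\alpha(Q)):\alpha\in I\}$ is \emph{equal} to $\mathbb{R}/P$ and not merely contained in it, which is exactly where surjectivity of $h$ enters. Everything else is a direct consequence of $h$ being simultaneously a group isomorphism (used for the coset and cardinality computations) and a homeomorphism (used only for the density of $P$).
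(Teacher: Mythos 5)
Your argument is correct and complete: the three steps (image of a countable dense subgroup under a topological group isomorphism is again a countable dense subgroup; additivity sends the coset $Q+t$ to the coset $P+h(t)$, and bijectivity of $h$ makes $E_\alpha(Q)\mapsto h(E_\alpha(Q))$ a bijection of $\mathbb{R}/Q$ onto $\mathbb{R}/P$; injectivity preserves the one-point intersections) are exactly what is needed, and your parenthetical observation that such an $h$ must have the form $h(x)=ax$ with $a\neq 0$ gives an equally valid shortcut. The paper itself states this theorem as an imported result from \cite{VH} and supplies no proof to compare against, so there is nothing further to reconcile; your proof is the natural one.
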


Let $Q_1$ and $Q_2$ be elements of $\mathcal{C}$ such that $Q_1\subseteq Q_2$ and $Q_1\neq Q_2$. It was shown in \cite{CN1} that if $\card (Q_2/Q_1)<\infty$ then $\mathcal{V}_1(Q_1)\subseteq \mathcal{V}_1(Q_2)$ and $\mathcal{V}_1(Q_1)\neq \mathcal{V}_1(Q_2)$, and if $\card (Q_2/Q_1)=\aleph_0$ then $\mathcal{V}_1(Q_1)\cap\mathcal{V}_1(Q_2)=\emptyset$. From here, we consider the collection  $\mathcal{V}=\{V: V\in \mathcal{V}(Q), Q\in \mathcal{C}\}$ of  all Vitali selectors of $\mathbb{R}$, and we define the semigroup $\mathcal{S}(\mathcal{V})=\{\bigcup_{i=1}^n V_i: V_i\in \mathcal{V}, n\in \mathbb{N} \}$ generated by the collection $\mathcal{V}$ of all Vitali selectors of $(\mathbb{R}, +)$.  Clearly, $\mathcal{V}(Q)\subsetneq \mathcal{V}$ and $\mathcal{V}_1(Q)\subsetneq \mathcal{S}(\mathcal{V})$ for each $Q\in \mathcal{C}$.  It is well known \cite{CN4} that the families $\mathcal{S}(\mathcal{V})$, $\mathcal{I}_m*\mathcal{S}(\mathcal{V})$  and $\mathcal{S}(\mathcal{V})*\mathcal{I}_m$ consist of sets without the Baire property and they are invariant under the action of $\Phi(\mathbb{R})$.

 \begin{theorem}[\cite{VH}]\label{Lebesgue44}
The families $\mathcal{N}_0*\mathcal{S}(\mathcal{V})$ and $\mathcal{S}(\mathcal{V})*\mathcal{N}_0$ are semigroups of sets on $\mathbb{R}$, for which elements are not measurable in the Lebesgue sense, such that $\mathcal{S}(\mathcal{V})\subsetneq \mathcal{N}_0*\mathcal{S}(\mathcal{V})\subsetneq \mathcal{S}(\mathcal{V})*\mathcal{N}_0$, and they are  invariant under the action of the group $\Pi(\mathbb{R})$ of all affine transformations of $\mathbb{R}$ onto itself.
\end{theorem}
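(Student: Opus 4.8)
The plan is to obtain all four assertions by combining results already established in the excerpt, reserving genuine work only for the strictness of the inclusions and the affine invariance. First I would invoke Proposition~\ref{Lebesgue25} with the semigroup of sets $\mathcal{S} = \mathcal{S}(\mathcal{V})$ and the ideal $\mathcal{I} = \mathcal{N}_0$ (indeed a $\sigma$-ideal): this at once yields that $\mathcal{N}_0 * \mathcal{S}(\mathcal{V})$ and $\mathcal{S}(\mathcal{V}) * \mathcal{N}_0$ are semigroups of sets on $\mathbb{R}$ and that $\mathcal{S}(\mathcal{V}) \subseteq \mathcal{N}_0 * \mathcal{S}(\mathcal{V}) \subseteq \mathcal{S}(\mathcal{V}) * \mathcal{N}_0$. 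To upgrade these to strict inclusions I would, for a fixed $V \in \mathcal{V}$, exhibit a set in $(\mathcal{N}_0 * \mathcal{S}(\mathcal{V})) \setminus \mathcal{S}(\mathcal{V})$ and one in $(\mathcal{S}(\mathcal{V}) * \mathcal{N}_0) \setminus (\mathcal{N}_0 * \mathcal{S}(\mathcal{V}))$ — typically $V$ enlarged (respectively diminished) by a carefully chosen nonempty null set, exploiting that every element of $\mathcal{S}(\mathcal{V})$ is a finite union of Vitali selectors and hence is neither null nor co-null. This separation argument is the least substantial part of the proof and runs parallel to the treatment of the analogous strict inclusions in \cite{CN2} and \cite{CN4}.

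For the non-measurability, since $\mathcal{N}_0 * \mathcal{S}(\mathcal{V}) \subseteq \mathcal{S}(\mathcal{V}) * \mathcal{N}_0$ it suffices to handle the larger family. Let $A \in \mathcal{S}(\mathcal{V}) * \mathcal{N}_0$, say $A = (U \setminus M) \cup N$ with $U \in \mathcal{S}(\mathcal{V})$ and $M, N \in \mathcal{N}_0$. Then $A \setminus U \subseteq N$ and $U \setminus A \subseteq M$, so $A \Delta U \subseteq M \cup N$ is a subset of a null set; by completeness of Lebesgue measure, $A \Delta U$ is measurable with $\mu(A \Delta U) = 0$. If $A$ were Lebesgue measurable, then Lemma~\ref{Lebesgue216} would force $U$ to be Lebesgue measurable. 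But $U = \bigcup_{i=1}^{n} V_i$ is a finite union of Vitali selectors, hence non-measurable by Theorem~\ref{Lebesgue41}; this contradiction shows that every element of $\mathcal{S}(\mathcal{V}) * \mathcal{N}_0$, and a fortiori of $\mathcal{N}_0 * \mathcal{S}(\mathcal{V})$, is not Lebesgue measurable.

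For invariance under the affine group $\Pi(\mathbb{R})$, the key reduction is to show that $\mathcal{S}(\mathcal{V})$ itself is $\Pi(\mathbb{R})$-invariant. Any $h \in \Pi(\mathbb{R})$ has the form $h(x) = ax + b$ with $a \neq 0$, so $h = \tau_b \circ \sigma_a$ where $\sigma_a(x) = ax$ is a topological group automorphism of $(\mathbb{R},+)$ and $\tau_b$ is translation by $b$. By Theorem~\ref{Lebesgue43}, $\sigma_a$ sends each Vitali selector related to $Q \in \mathcal{C}$ to a Vitali selector related to $\sigma_a(Q) = aQ \in \mathcal{C}$; by Theorem~\ref{Lebesgue220}, $\tau_b$ sends Vitali selectors to Vitali selectors as well. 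Hence $h$ maps Vitali selectors to Vitali selectors, and since $h$ commutes with finite unions, $h(\mathcal{S}(\mathcal{V})) \subseteq \mathcal{S}(\mathcal{V})$; applying this to $h^{-1}$ too gives $h(\mathcal{S}(\mathcal{V})) = \mathcal{S}(\mathcal{V})$. The family $\mathcal{N}_0$ is $\Pi(\mathbb{R})$-invariant by the classical facts recalled earlier that $\mu(E + t) = \mu(E)$ and $\mu(tE) = |t|\,\mu(E)$. Therefore, for $A = (U \setminus M) \cup N \in \mathcal{S}(\mathcal{V}) * \mathcal{N}_0$ and $h \in \Pi(\mathbb{R})$, bijectivity of $h$ gives $h(A) = (h(U) \setminus h(M)) \cup h(N)$ with $h(U) \in \mathcal{S}(\mathcal{V})$ and $h(M), h(N) \in \mathcal{N}_0$, so $h(A) \in \mathcal{S}(\mathcal{V}) * \mathcal{N}_0$; the same computation settles $\mathcal{N}_0 * \mathcal{S}(\mathcal{V})$. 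The step I expect to be the main obstacle is precisely this affine invariance: one must recognise that a nonzero dilation is a topological group automorphism of $(\mathbb{R},+)$ in order to apply Theorem~\ref{Lebesgue43}, after which everything else is routine bookkeeping with finite unions and complements.
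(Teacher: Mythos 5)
First, note that the paper itself offers no proof of Theorem~\ref{Lebesgue44}: it is imported verbatim from \cite{VH}. So the comparison can only be against the analogous arguments the paper does carry out elsewhere, and on that score most of your proposal is exactly right. The appeal to Proposition~\ref{Lebesgue25} for the semigroup property and the (non-strict) chain of inclusions, the symmetric-difference argument $A\Delta U\subseteq M\cup N$ combined with Lemma~\ref{Lebesgue216} and Theorem~\ref{Lebesgue41} for non-measurability, and the reduction of $\Pi(\mathbb{R})$-invariance to the factorisation $h=\tau_b\circ\sigma_a$ with Theorem~\ref{Lebesgue43} handling the dilation and Theorem~\ref{Lebesgue220} the translation, are precisely the techniques this paper uses in Proposition~\ref{Lebesgue33}, Theorem~\ref{Lebesgue56} and Corollary~\ref{Lebesgue47}. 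Those three parts of your write-up I would accept as complete.

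The genuine gap is the strictness of the two inclusions, which you defer with a criterion that does not actually do the job. Knowing that every $U\in\mathcal{S}(\mathcal{V})$ is neither null nor co-null does not show that $U\cup N\notin\mathcal{S}(\mathcal{V})$ for a suitable nonempty null $N$: the set $U\cup N$ is also neither null nor co-null, and nothing you have cited rules out its being a finite union of Vitali selectors related to \emph{other} countable dense subgroups (recall that $\mathcal{S}(\mathcal{V})$ ranges over all of $\mathcal{C}$, and the paper even notes that some Vitali selectors contain perfect sets, so crude cardinality or topological invariants will not separate). The second strict inclusion is harder still: every element of $\mathcal{N}_0*\mathcal{S}(\mathcal{V})$ has the form $(M\setminus U_1)\cup U_2$ and therefore \emph{contains} a whole element of $\mathcal{S}(\mathcal{V})$, so to place some $(U\setminus M)\cup N$ outside $\mathcal{N}_0*\mathcal{S}(\mathcal{V})$ you must produce $U$ and a null set $M$ such that $U\setminus M$ contains no Vitali selector related to any $Q\in\mathcal{C}$ whatsoever. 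That requires an explicit construction (this is the actual content of the corresponding strictness claims in \cite{CN2} and \cite{VH}), not just the observation that a null set has been removed. As it stands, the $\subsetneq$ assertions of the theorem are asserted rather than proved.
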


The following theorem is a more general result than Theorem \ref{Lebesgue41}.
\begin{theorem}\label{Lebesgue45}
Let $U=\bigcup_{i=1}^n V_i$ be a finite union of Vitali selectors of $\mathbb{R}$, where $V_i\in \mathcal{V}(Q_i)$ and each $Q_i$ is an element of $\mathcal{C}$ for $i=1,2,\cdots,n$. Then, the set $U$ cannot contain any subset of strictly positive Lebesgue measure.
\end{theorem}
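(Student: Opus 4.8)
The plan is to argue by contradiction, adapting the Banach-functional argument used in Section~3 for Proposition~\ref{Lebesgue36} (and in Lemma~\ref{Lebesgue310}), with the only change being that the structural input about Bernstein sets, Lemma~\ref{Lebesgue35}, is replaced by the corresponding fact for Vitali selectors, Lemma~\ref{Lebesgue226}. Suppose, towards a contradiction, that there is a Lebesgue measurable set $Y$ with $\mu(Y)>0$ and $Y\subseteq U=\bigcup_{i=1}^n V_i$. First I would reduce to the case where $Y$ is bounded: writing $Y=\bigcup_{r\in\mathbb{Z}}\bigl(Y\cap[r,r+1)\bigr)$ and using countable additivity of $\mu$, there is some $r\in\mathbb{Z}$ with $\mu\bigl(Y\cap[r,r+1)\bigr)>0$, and $Y\cap[r,r+1)$ is a bounded Lebesgue measurable subset of $U$ of positive measure; so we may assume $Y$ itself is bounded.

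Next I would invoke the Banach Theorem. Let $\vartheta$ be the restriction of $\mu$ to the translation invariant ring $\mathcal{B}_b(\mathbb{R})\cap\dom(\mu)$; this is a finitely additive, translation invariant functional with $\vartheta([0,1))=1$, so by Theorem~\ref{Lebesgue224} it extends to a finitely additive, translation invariant functional $\eta\colon\mathcal{B}_b(\mathbb{R})\longrightarrow[0,+\infty)$. Since $\eta$ is nonnegative and finitely additive on the ring $\mathcal{B}_b(\mathbb{R})$, it is monotone, hence finitely subadditive; therefore, using $Y\subseteq U$,
\[
0<\mu(Y)=\vartheta(Y)=\eta(Y)=\eta(Y\cap U)=\eta\Bigl(\bigcup_{i=1}^n (Y\cap V_i)\Bigr)\le\sum_{i=1}^n\eta(Y\cap V_i),
\]
so $\eta(Y\cap V_i)>0$ for at least one index $i\in\{1,2,\dots,n\}$. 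On the other hand, $Y\cap V_i$ is a bounded subset of the Vitali selector $V_i$, so by Lemma~\ref{Lebesgue226} it has the property described in Lemma~\ref{Lebesgue225}; since $Y\cap V_i\in\mathcal{B}_b(\mathbb{R})=\dom(\eta)$, Lemma~\ref{Lebesgue225}, applied with $\eta$ in the role of the functional, forces $\eta(Y\cap V_i)=0$. This contradiction proves that $U$ contains no subset of positive Lebesgue measure.

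I do not expect a genuine obstacle here; the work is essentially a transcription of the Bernstein-set case. The only points that require a line of care are: (a) that $\eta$, being nonnegative and finitely additive on a ring, is monotone --- for $C\subseteq D$ in the ring, $\eta(D)=\eta(C)+\eta(D\setminus C)\ge\eta(C)$ --- which legitimizes the finite subadditivity used in the displayed inequality; (b) that the sets $Y\cap V_i$ need not be Lebesgue measurable, which is exactly why one must pass to the \emph{total} extension $\eta$ on $\mathcal{B}_b(\mathbb{R})$ rather than work with $\mu$ itself; and (c) that the argument uses nothing about how the subgroups $Q_1,\dots,Q_n$ are related --- each $V_i$ is a Vitali selector relative to \emph{some} $Q_i\in\mathcal{C}$, and Lemma~\ref{Lebesgue226} applies to each of them separately. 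Hence allowing finitely many distinct countable dense subgroups instead of a single one costs nothing, and as an immediate consequence no element of $\mathcal{S}(\mathcal{V})$ contains a subset of positive Lebesgue measure.
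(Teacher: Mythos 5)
Your proposal is correct and follows essentially the same route as the paper's own proof: reduce to a bounded $Y$, extend the restriction of $\mu$ on $\mathcal{B}_b(\mathbb{R})\cap\dom(\mu)$ to a finitely additive translation invariant functional $\eta$ via Theorem \ref{Lebesgue224}, deduce $\eta(Y\cap V_i)>0$ for some $i$, and contradict this with Lemma \ref{Lebesgue226} and Lemma \ref{Lebesgue225}. Your added remarks on monotonicity/subadditivity of $\eta$ and on the irrelevance of how the $Q_i$ are related are accurate but do not change the argument.
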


\begin{proof}
Suppose that there exists a Lebesgue measurable subset $Y$ of $\mathbb{R}$ such that $\mu(Y)>0$ and $Y\subseteq U$. Without loss of generality, we may assume that the set $Y$ is bounded. Let $\vartheta$ be the restriction of $\mu$ to the ring of sets $\mathcal{B}_b(\mathbb{R})\cap \dom(\mu)$. For this $\vartheta$, there exists a functional $\eta$ as in Theorem \ref{Lebesgue224}. 
Clearly, we have 
\begin{equation}\label{Fourier5}
\begin{split}
0<\vartheta (Y)=\eta (Y)=\eta (Y\cap U)=\eta \left[Y\cap \left(\bigcup_{i=1}^n V_i\right)\right]\\ =\eta \left[\bigcup_{i=1}^n \left(Y\cap V_i\right)\right]\leq \sum_{i=1}^n \eta (Y\cap V_i)
\end{split}
\end{equation}

Inequality \ref{Fourier5} implies that $\eta (Y\cap V_i)>0$ for some $i\in \{1,2,\cdots,n\}$.
Since $Y\cap V_i$ is a bounded subset of the Vitali selector $V_i$, it follows from Lemma \ref{Lebesgue226}, that it has the property described in Lemma \ref{Lebesgue225}. According to Lemma \ref{Lebesgue225}, we must have the equality $\eta (Y\cap V_i)=0$, but this a contradiction. We conclude that the set $U$ cannot contain any Lebesgue measurable set with positive measure.
\end{proof}

It is important to point out that Theorem \ref{Lebesgue45} is not valid in the case of countable unions of Vitali selectors. A simple way to observe this fact, is to consider Equality \ref{Fourier2}, but a more general result was proved in \cite{CN1}, where it was shown  that,  if $O$ is a non-empty open subset of $\mathbb{R}$, then there exist a sequence of Vitali selectors $V_1, V_2, \cdots$ such that $O=\bigcup_{i=1}^{\infty} V_i$. Such a union contains a set of strictly positive Lebesgue measure.

\begin{corollary}\label{Lebesgue46}
No element of the family $\mathcal{S}(\mathcal{V})*\mathcal{N}_0$ can contain any set of strictly positive Lebesgue measure.
\end{corollary}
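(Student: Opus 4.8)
The plan is to argue by contradiction and reduce everything to Theorem \ref{Lebesgue45}, in exactly the same spirit as the passage from Proposition \ref{Lebesgue36} to Corollary \ref{Lebesgue37} in the Bernstein-set section. Suppose, for contradiction, that some $A\in \mathcal{S}(\mathcal{V})*\mathcal{N}_0$ contains a Lebesgue measurable set $Y$ with $\mu(Y)>0$. By the definition of the operation $*$, I would write $A=(U\setminus M)\cup N$, where $U\in\mathcal{S}(\mathcal{V})$ is a finite union of Vitali selectors (say $U=\bigcup_{i=1}^n V_i$ with $V_i\in\mathcal{V}(Q_i)$, $Q_i\in\mathcal{C}$) and $M,N\in\mathcal{N}_0$.

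The first step is to strip off the additive null ``noise'' $N$. Since $A=(U\setminus M)\cup N\subseteq U\cup N$, the inclusion $Y\subseteq A$ gives $Y\setminus N\subseteq U$. Put $Z=Y\setminus N$. Because $N$ is a null set (hence Lebesgue measurable) and $Y\in\mathcal{L}(\mathbb{R})$, the set $Z$ is Lebesgue measurable, and $\mu(Z)\geq \mu(Y)-\mu(N)=\mu(Y)>0$. Thus $Z$ is a Lebesgue measurable subset of $U$ of strictly positive measure. Note that the set-difference part $M$ plays no role here, since $U\setminus M\subseteq U$; only $N$ needs careful handling, and the only point to check is the elementary fact that removing a null set cannot decrease Lebesgue measure.

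The second step is to invoke Theorem \ref{Lebesgue45}: the finite union of Vitali selectors $U$ cannot contain any subset of positive Lebesgue measure, which contradicts the existence of $Z$. Hence no element of $\mathcal{S}(\mathcal{V})*\mathcal{N}_0$ can contain a set of positive Lebesgue measure, and the corollary follows. I do not expect any genuine obstacle in this argument: all the substance is already carried by Theorem \ref{Lebesgue45} (which in turn rests on the Banach Theorem \ref{Lebesgue224} together with Lemmas \ref{Lebesgue225} and \ref{Lebesgue226}), and what remains is the short, routine reduction described above.
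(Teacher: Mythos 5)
Your proof is correct and follows exactly the route the paper intends: the corollary is stated without proof as an immediate consequence of Theorem \ref{Lebesgue45}, and your reduction (strip off the null set $N$, observe $Y\setminus N\subseteq U$ still has positive measure, then apply Theorem \ref{Lebesgue45}) is precisely the routine argument being left to the reader. Nothing is missing.
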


With the help of Theorem \ref{Lebesgue45}, we can prove the following statement.

\begin{corollary}\label{Lebesgue47}
Let $U_k=\bigcup_{i=1}^n V_{i k}$ be a finite union of Vitali selectors of $\mathbb{R}$, where $V_{ik}\in \mathcal{V}(Q_k)$ and $Q_k$ is an element of $\mathcal{C}$ for each $k$,  and let $h_k$ be a topological group isomorphism of $(\mathbb{R},+)$ onto itself, for each $k=1,2, \cdots,n$. Then, the union $U=\bigcup_{k=1}^m h_k(U_k)$, cannot contain any Lebesgue measurable set of strictly positive measure.
\end{corollary}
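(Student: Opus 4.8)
The plan is to reduce the assertion to Theorem \ref{Lebesgue45} by verifying that $U$ is itself a finite union of Vitali selectors of $\mathbb{R}$.

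First I would unwind the images $h_k(U_k)$. Fix an index $k$. Since $h_k$ is a topological group isomorphism of $(\mathbb{R},+)$ onto itself and each $V_{ik}$ is a Vitali selector related to $Q_k\in\mathcal{C}$, Theorem \ref{Lebesgue43} yields that $P_k:=h_k(Q_k)$ is again a countable dense subgroup of $(\mathbb{R},+)$ and that $h_k(V_{ik})\in\mathcal{V}(P_k)$ for every $i$. In particular each $h_k(V_{ik})$ is a Vitali selector of $\mathbb{R}$. Because $h_k$ is a bijection it commutes with the formation of unions, so $h_k(U_k)=h_k\left(\bigcup_{i} V_{ik}\right)=\bigcup_{i}h_k(V_{ik})$.

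Next I would assemble everything. Taking the union over $k$ we obtain
\[
U=\bigcup_{k}h_k(U_k)=\bigcup_{k}\bigcup_{i}h_k(V_{ik}),
\]
which is a finite union of Vitali selectors of $\mathbb{R}$, the selector $h_k(V_{ik})$ being related to the countable dense subgroup $P_k\in\mathcal{C}$. Now Theorem \ref{Lebesgue45} applies verbatim: a finite union of Vitali selectors of $\mathbb{R}$, with the selectors attached to (not necessarily equal) elements of $\mathcal{C}$, cannot contain any subset of positive Lebesgue measure. Hence $U$ cannot contain any Lebesgue measurable set of positive measure, which is the assertion.

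This argument is purely a matter of bookkeeping and I do not foresee a genuine obstacle. The only step that might look like it needs attention is the claim that the images $P_k=h_k(Q_k)$ remain countable and dense, so that the sets $h_k(V_{ik})$ truly qualify as Vitali selectors; but this is exactly the content of Theorem \ref{Lebesgue43} and so is immediate. It is worth recording that the subgroups $P_k$ may differ from the original $Q_k$ and from one another --- which is precisely why Theorem \ref{Lebesgue45} is phrased for Vitali selectors attached to arbitrary elements of $\mathcal{C}$ rather than to a single fixed subgroup.
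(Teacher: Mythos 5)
Your argument is correct and coincides with the paper's own proof: both reduce the claim to Theorem \ref{Lebesgue45} by using Theorem \ref{Lebesgue43} to see that each $h_k(V_{ik})$ is a Vitali selector related to $h_k(Q_k)\in\mathcal{C}$, so that $U$ is itself a finite union of Vitali selectors. No changes are needed.
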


\begin{proof}
By Theorem \ref{Lebesgue45}, it is enough to show that $U$ is a finite union of Vitali selectors of $\mathbb{R}$. Note that  $h_k(U_k)=h_k\left(\bigcup_{i=1}^n V_{ik}\right)=\bigcup_{i=1}^n h_k\left(V_{ik}\right)$. By Theorem \ref{Lebesgue43}, the set $h_k(V_{ik})$ is a Vitali selector related to the group $h_k(Q_k)\in \mathcal{C}$. Hence, $h_k(U_k)$ is a finite union of Vitali selectors. It follows that $U=\bigcup_{k=1}^m h_k(U_k)= \bigcup_{k=1}^m \bigcup_{i=1}^n h_k(V_{ik})$ is also a  finite union of Vitali selectors of $\mathbb{R}$.
\end{proof}

 \section{Semigroups of non-Lebesgue measurable sets  generated by a combination of Bernstein sets and Vitali selectors}
 We now combine Bernstein sets and Vitali selectors of $\mathbb{R}$, to construct families of sets for which elements are not measurable in the Lebesgue sense.
 
\begin{theorem}\label{Lebesgue51}
Let $B$ be a Bernstein subset of $\mathbb{R}$ which has the algebraic structure of being a subgroup of $(\mathbb{R},+)$, and let $\mathcal{S}(\mathcal{V})$ be the semigroup generated by the collection $\mathcal{V}$ all Vitali selectors of $\mathbb{R}$. Any union $U=U_1\cup U_2$, where $U_1\in \mathcal{S}(\mathcal{B})$ and $U_2\in \mathcal{S}(\mathcal{V})$, cannot contain any subset of strictly positive Lebesgue measure.
\end{theorem}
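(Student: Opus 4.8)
The plan is to replay, in a merged form, the Banach--functional arguments already used in the proofs of Proposition \ref{Lebesgue36} (the purely Bernstein case) and Theorem \ref{Lebesgue45} (the purely Vitali case). Suppose, for contradiction, that there is a Lebesgue measurable set $Y\subseteq U_1\cup U_2$ with $\mu(Y)>0$. First I would reduce to the bounded case: writing $Y=\bigcup_{r\in\mathbb{Z}}\bigl(Y\cap[r,r+1)\bigr)$ and using countable additivity of $\mu$, some piece $Y\cap[r,r+1)$ has positive measure, so we may replace $Y$ by that piece and assume from now on that $Y\in\mathcal{B}_b(\mathbb{R})$ and $\mu(Y)>0$.

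Next, expand the two finite unions: $U_1=\bigcup_{i=1}^n B_i$ with each $B_i\in\mathcal{B}$, and $U_2=\bigcup_{j=1}^m V_j$ with each $V_j$ a Vitali selector of $\mathbb{R}$. Since $Y\subseteq U_1\cup U_2$, we have $Y=\bigl(\bigcup_{i=1}^n(Y\cap B_i)\bigr)\cup\bigl(\bigcup_{j=1}^m(Y\cap V_j)\bigr)$, a finite union of bounded sets. Then I would invoke Banach's Theorem (Theorem \ref{Lebesgue224}): letting $\vartheta$ be the restriction of $\mu$ to $\mathcal{B}_b(\mathbb{R})\cap\dom(\mu)$, there is a finitely additive, translation invariant functional $\eta:\mathcal{B}_b(\mathbb{R})\longrightarrow[0,+\infty)$ extending $\vartheta$. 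Being nonnegative and finitely additive on the ring of sets $\mathcal{B}_b(\mathbb{R})$, $\eta$ is monotone and finitely subadditive, so
\[
0<\vartheta(Y)=\eta(Y)\le\sum_{i=1}^n\eta(Y\cap B_i)+\sum_{j=1}^m\eta(Y\cap V_j).
\]
Hence at least one of the summands on the right is strictly positive.

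To reach the contradiction I would then argue that every summand in fact vanishes. Indeed, $Y\cap B_i$ is a bounded subset of the Bernstein set $B_i\in\mathcal{B}$, so by Lemma \ref{Lebesgue35} it has the property described in Lemma \ref{Lebesgue225}, which forces $\eta(Y\cap B_i)=0$; likewise $Y\cap V_j$ is a bounded subset of the Vitali selector $V_j$, so by Lemma \ref{Lebesgue226} it also has that property, whence $\eta(Y\cap V_j)=0$. This contradicts the positivity just obtained, and so no such $Y$ exists. The step I expect to need the most care is the passage to $\eta$ together with the justification of finite subadditivity: one must note that $\mathcal{B}_b(\mathbb{R})$ is closed under the relevant set operations (it is a ring of sets) so that a nonnegative finitely additive functional on it is automatically monotone and subadditive, and that the possibly non-measurable pieces $Y\cap B_i$ and $Y\cap V_j$ still belong to $\dom(\eta)=\mathcal{B}_b(\mathbb{R})$ simply by being bounded. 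Everything else is a routine assembly of Lemmas \ref{Lebesgue225}, \ref{Lebesgue226} and \ref{Lebesgue35}, exactly as in the proofs already given for the two separate cases.
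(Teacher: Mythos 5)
Your proposal is correct and follows essentially the same route as the paper's own proof: reduce to a bounded $Y$, decompose $U_1\cup U_2$ into its finitely many Bernstein translates and Vitali selectors, pass to the Banach extension $\eta$ of the restricted Lebesgue measure, and kill each piece $Y\cap B_i$ and $Y\cap V_j$ via Lemma \ref{Lebesgue35} and Lemma \ref{Lebesgue226} together with Lemma \ref{Lebesgue225}. Your only (welcome) refinement is to run the subadditivity step through $\eta$ itself rather than through $\mu$, which sidesteps any worry about the measurability of the intersections $Y\cap X_i$.
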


\begin{proof}
Assume that there exists a Lebesgue measurable set $Y$ such that $\mu(Y)>0$ and $Y\subseteq U=U_1\cup U_2$, where $U_1=\bigcup_{i=1}^n B_i$ and $U_2=\bigcup_{k=1}^m V_k$ for $B_i\in \mathcal{B}$ and $V_k\in \mathcal{V}$. Without loss of generality, we may assume that the set $Y$ is bounded. It follows from Proposition \ref{Lebesgue36}  and Theorem \ref{Lebesgue45} that the set $Y$ cannot stay entirely in $U_1$ nor in $U_2$. Write $U=\bigcup_{i=1}^{n+m} X_i$, where $X_i=B_i $ for $i=1,2, \cdots, n$ and $X_{n+k}=V_k$ for $k=1,2,\cdots, m$. Then we have the following:
\begin{equation}\label{Fourier6}
0<\mu(Y)=\mu(Y\cap U)=\mu\left[\bigcup_{i=1}^{n+m} (Y\cap X_i)\right]\leq \sum_{i=1}^{n+m}\mu(Y\cap X_i).
\end{equation}
It follows from Inequality \ref{Fourier6} that $\mu(Y\cap X_i) >0$ for some index $i \in\{1,2, \cdots, n+m\}$. Since $Y\cap X_i\in \mathcal{B}_b(\mathbb{R})$, let $\vartheta$ be the restriction of $\mu$ to the ring of sets  $\mathcal{B}_b(\mathbb{R})\cap \dom (\mu)$. For this $\vartheta$ there exists a functional $\eta$ as in Theorem \ref{Lebesgue224} which is an extension of $\vartheta$. So we have $0<\mu(Y\cap X_i)=\vartheta(Y\cap X_i)=\eta (Y\cap X_i)$. 
\begin{enumerate}[$\bullet$]
\item If $X_i$ is an element of $\mathcal{V}$, then $\eta (Y\cap X_i)=0$ by Lemma \ref{Lebesgue226}. 
\item If $X_i$ is an element of $\mathcal{B}$, then $\eta (Y\cap X_i)=0$ by Lemma \ref{Lebesgue35}.
\end{enumerate}
As a conclusion the set $Y$ cannot exist.
\end{proof}

\begin{corollary}\label{Lebesgue52}
Let $B$ be a Bernstein set of $\mathbb{R}$ which has the algebraic  structure of being a subgroup of $(\mathbb{R},+)$, and let $\mathcal{S}(\mathcal{V})$ be the semigroup generated by the collection $\mathcal{V}$ all Vitali selectors of $\mathbb{R}$. Then the semigroup $\mathcal{S}(\mathcal{B})\vee \mathcal{S}(\mathcal{V})$ consists of sets which are not measurable in the Lebesgue sense, and it is invariant under the action of the group $\Phi(\mathbb{R})$.
\end{corollary}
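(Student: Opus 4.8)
The plan is to reduce everything to three facts already in place: Lemma \ref{Lebesgue26} for the semigroup structure, Lemma \ref{Lebesgue31} (Bernstein property of elements of $\mathcal{S}(\mathcal{B})$) together with Theorem \ref{Lebesgue231} to handle the null case, and Theorem \ref{Lebesgue51} to handle the positive-measure case. First I would observe that since $\mathcal{S}(\mathcal{B})$ and $\mathcal{S}(\mathcal{V})$ are both semigroups of sets on $\mathbb{R}$, Lemma \ref{Lebesgue26} gives at once that $\mathcal{S}(\mathcal{B})\vee \mathcal{S}(\mathcal{V})$ is a semigroup of sets, so only the non-measurability and the invariance remain.

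For the non-measurability I would take an arbitrary $U\in \mathcal{S}(\mathcal{B})\vee \mathcal{S}(\mathcal{V})$, write $U=U_1\cup U_2$ with $U_1\in \mathcal{S}(\mathcal{B})$ and $U_2\in \mathcal{S}(\mathcal{V})$, and argue by contradiction assuming $U\in \mathcal{L}(\mathbb{R})$. The argument splits on the value of $\mu(U)$. If $\mu(U)=0$, then $U_1\subseteq U$ forces $U_1\in \mathcal{N}_0\subseteq \mathcal{L}(\mathbb{R})$; but by Lemma \ref{Lebesgue31} the set $U_1$ is a Bernstein set, hence not Lebesgue measurable by Theorem \ref{Lebesgue231}, a contradiction. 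If $\mu(U)>0$, then $U$ is itself a Lebesgue measurable subset of $U_1\cup U_2$ of positive measure, which directly contradicts Theorem \ref{Lebesgue51}. In either case we obtain a contradiction, so $U\notin \mathcal{L}(\mathbb{R})$.

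For invariance under $\Phi(\mathbb{R})$, given $h\in \Phi(\mathbb{R})$ I would use $h(U)=h(U_1\cup U_2)=h(U_1)\cup h(U_2)$, and invoke the invariance of $\mathcal{S}(\mathcal{B})$ under $\Phi(\mathbb{R})$ (noted at the start of Section 3) and of $\mathcal{S}(\mathcal{V})$ under $\Phi(\mathbb{R})$ (which follows from Theorem \ref{Lebesgue220}), so that $h(U_1)\in \mathcal{S}(\mathcal{B})$ and $h(U_2)\in \mathcal{S}(\mathcal{V})$, whence $h(U)\in \mathcal{S}(\mathcal{B})\vee \mathcal{S}(\mathcal{V})$.

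At this level there is essentially no obstacle: the genuinely hard work is Theorem \ref{Lebesgue51}, whose proof leans on the Banach Theorem \ref{Lebesgue224} together with Lemmas \ref{Lebesgue225}, \ref{Lebesgue226}, and \ref{Lebesgue35}. The only subtlety worth flagging in the write-up is that excluding $\mu(U)=0$ genuinely needs the Bernstein property of elements of $\mathcal{S}(\mathcal{B})$ and is not covered by Theorem \ref{Lebesgue51} alone, so both inputs are used, exactly as in the proof of Theorem \ref{Lebesgue311}.
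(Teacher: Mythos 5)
Your proposal is correct, and its skeleton (semigroup structure from Lemma \ref{Lebesgue26}, a contradiction argument split on $\mu(U)=0$ versus $\mu(U)>0$ with Theorem \ref{Lebesgue51} killing the positive case, and invariance inherited componentwise) matches the paper's proof. The one place where you genuinely diverge is the exclusion of $\mu(U)=0$. You use the Bernstein half: $U_1\subseteq U$ together with completeness of Lebesgue measure would make the Bernstein set $U_1$ a null, hence measurable, set, contradicting Lemma \ref{Lebesgue31} and Theorem \ref{Lebesgue231} --- exactly the device used in the proof of Theorem \ref{Lebesgue311}. The paper instead uses the Vitali half: fixing a selector $V_k\subseteq U_2\subseteq U$ related to $Q_k$, Equality \eqref{Fourier2} gives $\mathbb{R}=\bigcup\{U+q: q\in Q_k\}$, a countable union of translates of $U$, so $\mu(U)=0$ would force $\mu(\mathbb{R})=0$. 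Both arguments are sound; yours is slightly more economical in that it reuses machinery already deployed in Section 3 and needs no covering argument, while the paper's has the mild advantage of working even if the Bernstein component were absent (it only needs some Vitali selector inside $U$), which is why the same covering trick reappears in settings where no Bernstein set is available. Your remark that Theorem \ref{Lebesgue51} alone does not rule out the null case is exactly right and is the point most easily glossed over here.
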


\begin{proof}
Assume that there exists a Lebesgue measurable set $U$ in $\mathcal{S}(\mathcal{B})\vee \mathcal{S}(\mathcal{V})$. Then, $U=U_1\cup U_2$, where $U_1\in \mathcal{S}(\mathcal{B})$ and $U_2\in \mathcal{S}(\mathcal{V})$.  Since $U_2=\bigcup_{i=1}^n V_i$, where $V_i\in \mathcal{V}(Q_i)$, let $V_k$ be a fixed Vitali selector in this union such that  $V_k\in \mathcal{V}(Q_k)$.  Since by Equality \ref{Fourier2}, $\mathbb{R}=\bigcup \{V_k+q: q\in Q_k\}$ and $V_k\subseteq U_2\subseteq U$, then we have $\mathbb{R}=\bigcup \{U+q: q\in Q_k\}$.
 Given that $\mu(U+q)=\mu(U)$ for each $q\in Q_k$ and $\mu(\mathbb{R})>0$, then we must have $\mu(U)>0$, and this contradicts Theorem \ref{Lebesgue51}.

Since the family $\mathcal{S}(\mathcal{V})$ is invariant under the action of the group $\Pi(\mathbb{R})$ and the family  $\mathcal{S}(\mathcal{B})$  invariant under the action of the group $\Phi(\mathbb{R})$, it follows that the family $\mathcal{S}(\mathcal{B})\vee \mathcal{S}(\mathcal{V})$ is invariant under the action of the group $\Phi(\mathbb{R})$.
\end{proof}

\begin{corollary}\label{Lebesgue53}
Each element of the family $\mathcal{S}(\mathcal{B}) * \mathcal{S}(\mathcal{V})$ is not measurable in the Lebesgue sense.
\end{corollary}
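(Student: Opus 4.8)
The plan is to adapt, almost verbatim, the proof of Corollary \ref{Lebesgue314}, using Theorem \ref{Lebesgue51} in place of Lemma \ref{Lebesgue310}. Concretely: unravel the operation $*$, reduce to a dichotomy on $\mu(A)$, and kill both alternatives.

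First I would fix an arbitrary $A\in \mathcal{S}(\mathcal{B})* \mathcal{S}(\mathcal{V})$ and write it as $A=(U_1\setminus U_2)\cup U_3$ with $U_1\in \mathcal{S}(\mathcal{B})$ and $U_2,U_3\in \mathcal{S}(\mathcal{V})$. Arguing by contradiction, suppose $A\in \mathcal{L}(\mathbb{R})$. The set $U_3$, being a finite union of Vitali selectors, contains a Vitali selector $V$, and by Proposition \ref{Lebesgue219}(iii) the set $V$ is not a null set, so $\mu^{*}(V)>0$. From $V\subseteq U_3\subseteq A$ and the measurability of $A$ we obtain $\mu(A)\ge \mu^{*}(V)>0$, so the alternative $\mu(A)=0$ is excluded. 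For the remaining alternative, observe that $A=(U_1\setminus U_2)\cup U_3\subseteq U_1\cup U_3$, and $U_1\cup U_3\in \mathcal{S}(\mathcal{B})\vee \mathcal{S}(\mathcal{V})$. By Theorem \ref{Lebesgue51}, the set $U_1\cup U_3$ cannot contain any subset of positive Lebesgue measure, which contradicts the fact that $A$ is a Lebesgue measurable subset of $U_1\cup U_3$ with $\mu(A)>0$. Hence $A\notin \mathcal{L}(\mathbb{R})$.

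I do not expect any real obstacle here: the substance of the statement is carried by Theorem \ref{Lebesgue51}, whose proof already handles the genuinely delicate point — that a mixed union of a set from $\mathcal{S}(\mathcal{B})$ and a finite union of Vitali selectors admits no positive-measure subset — via the Banach extension functional $\eta$ together with Lemmas \ref{Lebesgue225}, \ref{Lebesgue226} and \ref{Lebesgue35}. The only spot demanding a little attention is certifying that the additive summand $U_3$ forces $\mu(A)>0$ rather than $\mu(A)=0$; this is immediate from the non-nullity of Vitali selectors, or equivalently from the non-measurability result Theorem \ref{Lebesgue41}. One could alternatively argue via the covering identity $\mathbb{R}=\bigcup\{V+q: q\in Q\}$ as in Corollary \ref{Lebesgue52}, but the containment argument above is shorter and more direct.
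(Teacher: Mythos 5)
Your proposal is correct and follows essentially the same route as the paper's own proof: decompose $A=(U_1\setminus U_2)\cup U_3$, rule out $\mu(A)=0$ because $U_3\subseteq A$ is a finite union of Vitali selectors, and rule out $\mu(A)>0$ by the inclusion $A\subseteq U_1\cup U_3$ together with Theorem \ref{Lebesgue51}. The only difference is that you spell out the null-measure exclusion via the non-nullity of a single Vitali selector (Proposition \ref{Lebesgue219}(iii)), a detail the paper leaves implicit.
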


\begin{proof}
Let $A\in \mathcal{S}(\mathcal{B}) * \mathcal{S}(\mathcal{V})$ and assume that $A$ is a Lebesgue measurable set. Then, $A=(U_1\setminus U_2)\cup U_3$ for some $U_1 \in \mathcal{S}(\mathcal{B})$ and $U_2, U_3\in \mathcal{S}(\mathcal{V})$. Since $U_3$ is a finite union of Vitali selectors and $U_3\subseteq A$, then the set $A$ cannot have the Lebesgue measure zero. Assume that $\mu (A)>0$. It follows that  $A=(U_1\setminus U_2)\cup U_3\subseteq U_1\cup U_3\in \mathcal{S}(\mathcal{B}_1)\vee \mathcal{S}(\mathcal{V})$. But by Theorem \ref{Lebesgue51}, the set $U_1\cup U_2$ cannot contain any set of strictly positive Lebesgue measure.
\end{proof}
Let us point out that the family $\mathcal{S}(\mathcal{B}) * \mathcal{S}(\mathcal{V})$  does not need to be a semigroup of sets.

\begin{theorem}\label{Lebesgue56}
The families $ \mathcal{N}_0*\left(\mathcal{S}(\mathcal{V})\vee \mathcal{S}(\mathcal{B})\right)$ and $\left(\mathcal{S}(\mathcal{V})\vee \mathcal{S}(\mathcal{B})\right)*\mathcal{N}_0$ are semigroups of sets on $\mathbb{R}$ satisfying the inclusions:  $\mathcal{S}(\mathcal{V})\vee \mathcal{S}(\mathcal{B}) \subseteq \mathcal{N}_0*\left(\mathcal{S}(\mathcal{V})\vee \mathcal{S}(\mathcal{B})\right) \subseteq \left(\mathcal{S}(\mathcal{V})\vee \mathcal{S}(\mathcal{B})\right)*\mathcal{N}_0$. They are invariant under the action of the group $\Phi(\mathbb{R})$, and they consist of sets which are not measurable in the Lebesgue sense.
\end{theorem}

\begin{proof}
The given families are semigroups of sets by Proposition \ref{Lebesgue25} and Lemma \ref{Lebesgue26}. The inclusions follow from Proposition \ref{Lebesgue25}.

Let $A\in \left(\mathcal{S}(\mathcal{V})\vee \mathcal{S}(\mathcal{B})\right)*\mathcal{N}_0$ and assume that $A$ is measurable in the Lebesgue sense. Then $A=((U_1\cup U_2)\setminus M)\cup N$, where $U_1\in \mathcal{S}(\mathcal{V}), U_2\in \mathcal{S}(\mathcal{B})$ and $M,N\in \mathcal{N}_0$. Note that $A\setminus (U_1\cup U_2)\subseteq N$ and $(U_1\cup U_2)\setminus A\subseteq M$ and hence $A\Delta (U_1\cup U_2) \subseteq M\cup N$. It follows that $\mu(A\Delta (U_1\cup U_2))\leq \mu (M\cup N)=0$ and thus $\mu(A\Delta (U_1\cup U_2))=0$. It follows from Lemma \ref{Lebesgue216} that the set $U_1\cup U_2$ must be measurable in the Lebesgue sense.  But, Corollary \ref{Lebesgue52} tells us that the set $U_1\cup U_1$ is not measurable in the Lebesgue sense, and we have a contradiction.
\end{proof}

Let us note that $\mathcal{S}(\mathcal{B})\vee \mathcal{S}(\mathcal{V})\subseteq \mathcal{S}(\mathcal{B}) * \mathcal{S}(\mathcal{V})$ and $\left(\mathcal{S}(\mathcal{B})\vee \mathcal{S}(\mathcal{V})\right)*\mathcal{N}_0\subseteq \left(\mathcal{S}(\mathcal{B}) * \mathcal{S}(\mathcal{V})\right)*\mathcal{N}_0$. It follows from Lemma \ref{Lebesgue28} and Proposition \ref{Lebesgue211} that $\mathcal{S}(\mathcal{B}\vee \mathcal{V})*\mathcal{N}_0=\left(\mathcal{S}(\mathcal{V})\vee \mathcal{S}(\mathcal{B})\right)*\mathcal{N}_0=(\mathcal{S}(\mathcal{V})*\mathcal{N}_0)\vee (\mathcal{S}(\mathcal{B})*\mathcal{N}_0)$. 
These semigroups consist of sets which are not measurable in the Lebesgue sense.

\begin{corollary}\label{Lebesgue54}
The families  $\mathcal{N}_0*\left(\mathcal{S}(\mathcal{B}) * \mathcal{S}(\mathcal{V}) \right)$ and $\left(\mathcal{S}(\mathcal{B}) * \mathcal{S}(\mathcal{V})\right)*\mathcal{N}_0$ consist of elements which are not measurable in the Lebesgue sense.
\end{corollary}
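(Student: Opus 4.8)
The plan is to imitate the argument of Proposition \ref{Lebesgue33} and Theorem \ref{Lebesgue56}, reducing both assertions to Corollary \ref{Lebesgue53}. The guiding observation is that every member of either family differs from a single element of $\mathcal{S}(\mathcal{B})\ast\mathcal{S}(\mathcal{V})$ by a Lebesgue-null set; since such an element is non-measurable by Corollary \ref{Lebesgue53}, Lemma \ref{Lebesgue216}, which transfers measurability across null symmetric differences, finishes the job.

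Concretely, I would first take $A\in\left(\mathcal{S}(\mathcal{B})\ast\mathcal{S}(\mathcal{V})\right)\ast\mathcal{N}_0$ and suppose, for contradiction, that $A\in\mathcal{L}(\mathbb{R})$. By definition $A=(C\setminus M)\cup N$ with $C\in\mathcal{S}(\mathcal{B})\ast\mathcal{S}(\mathcal{V})$ and $M,N\in\mathcal{N}_0$. A short computation gives $A\setminus C\subseteq N$ and $C\setminus A\subseteq M$, so $A\,\Delta\,C\subseteq M\cup N$; since $\mathcal{N}_0$ is a $\sigma$-ideal and Lebesgue measure is complete, $A\,\Delta\,C$ is measurable with $\mu(A\,\Delta\,C)=0$. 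Lemma \ref{Lebesgue216} then yields $C\in\mathcal{L}(\mathbb{R})$, contradicting Corollary \ref{Lebesgue53}.

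For $A\in\mathcal{N}_0\ast\left(\mathcal{S}(\mathcal{B})\ast\mathcal{S}(\mathcal{V})\right)$ the same strategy applies: write $A=(N\setminus C_1)\cup C_2$ with $N\in\mathcal{N}_0$ and $C_1,C_2\in\mathcal{S}(\mathcal{B})\ast\mathcal{S}(\mathcal{V})$, and note that now $C_2\subseteq A$ and $A\setminus C_2\subseteq N$, so $A\,\Delta\,C_2\subseteq N$ and $\mu(A\,\Delta\,C_2)=0$; Lemma \ref{Lebesgue216} together with Corollary \ref{Lebesgue53} again forces a contradiction. As an alternative for this second family one can observe that a Vitali selector sitting inside $C_2$, hence inside $A$, forces $\mu(A)>0$ whenever $A$ is measurable, and then $A\setminus N$ is a positive-measure measurable subset of some $U_1\cup U_3\in\mathcal{S}(\mathcal{B})\vee\mathcal{S}(\mathcal{V})$, contradicting Theorem \ref{Lebesgue51}.

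I do not expect a genuine obstacle; as in Corollary \ref{Lebesgue314}, the statement only asserts non-measurability and not that these families are semigroups, which is fortunate since $\mathcal{S}(\mathcal{B})\ast\mathcal{S}(\mathcal{V})$ itself need not be one. The only point requiring care is checking that $A\,\Delta\,C$ is honestly Lebesgue measurable, not merely of outer measure zero, so that Lemma \ref{Lebesgue216} is applicable — this is immediate from completeness of Lebesgue measure. The rest is routine manipulation of the operations $\ast$, $\setminus$, and $\Delta$.
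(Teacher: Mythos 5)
Your proposal is correct and follows exactly the route the paper intends for this corollary (which it leaves implicit): reduce to Corollary \ref{Lebesgue53} by noting that each element of either family has null symmetric difference with some element of $\mathcal{S}(\mathcal{B})*\mathcal{S}(\mathcal{V})$ and then applying Lemma \ref{Lebesgue216}, precisely the pattern of Proposition \ref{Lebesgue33} and Theorem \ref{Lebesgue56}. Your decompositions and the completeness remark are accurate, so nothing further is needed.
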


\begin{question}\label{Lebesgue55}
\emph{Is each element of the families $\mathcal{S}(\mathcal{B}) \vee \mathcal{S}(\mathcal{V})$ and $\mathcal{S}(\mathcal{B}) * \mathcal{S}(\mathcal{V})$ without the Baire property in $\mathbb{R}$?}
\end{question}

The positive answer to Question \ref{Lebesgue55} will imply that the semigroups of sets  $\mathcal{S}(\mathcal{V})\vee \mathcal{S}(\mathcal{B}), \mathcal{I}_m*\left(\mathcal{S}(\mathcal{V})\vee \mathcal{S}(\mathcal{B})\right)$ and $\left(\mathcal{S}(\mathcal{V})\vee \mathcal{S}(\mathcal{B})\right)*\mathcal{I}_m$;
which are invariant under the action of the group $\Phi(\mathbb{R})$, consist of sets without the Baire property in $\mathbb{R}$.

\begin{lemma}\label{Lebesgue57}
Let $B_1$ and $B_2$ be Bernstein subsets of $\mathbb{R}$ having the algebraic structure of being subgroups of $(\mathbb{R},+)$, and let $\mathcal{S}(\mathcal{V})$ be the semigroup generated by all Vitali selectors of $\mathbb{R}$. Then, any union $U=U_1\cup U_2\cup U_3$, where $U_1\in \mathcal{S}(\mathcal{B}_1), U_2\in \mathcal{S}(\mathcal{B}_3)$ and $U_3\in \mathcal{S}(\mathcal{V})$ cannot contain any set of strictly positive Lebesgue measure. In particular, the family  $\mathcal{S}(\mathcal{B}_1)\vee \mathcal{S}(\mathcal{B}_2)\vee \mathcal{S}(\mathcal{V})$ is a semigroup of sets for which elements are  not measurable in the Lebesgue sense, and it is invariant under the action of the group $\Phi(\mathbb{R})$.
\end{lemma}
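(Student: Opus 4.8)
The plan is to combine, essentially verbatim, the schemes of Theorem~\ref{Lebesgue51} and Lemma~\ref{Lebesgue310}, now carrying three families of building blocks simultaneously. First I would argue by contradiction: assume there is a Lebesgue measurable set $Y$ with $\mu(Y)>0$ and $Y\subseteq U=U_1\cup U_2\cup U_3$. Writing $Y=\bigcup_{r\in\mathbb{Z}}\bigl(Y\cap[r,r+1)\bigr)$ and using countable additivity, I may assume $Y$ is bounded. Then I would expand $U$ as a single finite union $U=\bigcup_{i=1}^{N}X_i$, where the first block of the $X_i$'s are the cosets of $B_1$ whose union is $U_1$, the second block the cosets of $B_2$ whose union is $U_2$, and the last block the Vitali selectors whose union is $U_3$. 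Subadditivity then yields
\begin{equation*}
0<\mu(Y)=\mu(Y\cap U)=\mu\!\left[\bigcup_{i=1}^{N}(Y\cap X_i)\right]\le\sum_{i=1}^{N}\mu(Y\cap X_i),
\end{equation*}
so $\mu(Y\cap X_i)>0$ for some index $i$.

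Next I would invoke the Banach Theorem~\ref{Lebesgue224}: letting $\vartheta$ be the restriction of $\mu$ to $\mathcal{B}_b(\mathbb{R})\cap\dom(\mu)$, there is a finitely additive translation invariant extension $\eta\colon\mathcal{B}_b(\mathbb{R})\to[0,+\infty)$, so $\eta(Y\cap X_i)=\mu(Y\cap X_i)>0$. On the other hand, $Y\cap X_i$ is a bounded subset of $X_i$, and in each of the three possible cases it has the disjoint-translates property of Lemma~\ref{Lebesgue225}: if $X_i$ is a coset of $B_1$ or of $B_2$ this is Lemma~\ref{Lebesgue35} (applied with the appropriate subgroup), while if $X_i$ is a Vitali selector this is Lemma~\ref{Lebesgue226}. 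Hence $\eta(Y\cap X_i)=0$, a contradiction, and the first assertion is proved.

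For the remaining claims: the family $\mathcal{S}(\mathcal{B}_1)\vee\mathcal{S}(\mathcal{B}_2)\vee\mathcal{S}(\mathcal{V})$ is a semigroup of sets by iterated application of Lemma~\ref{Lebesgue26}, since $\mathcal{S}(\mathcal{B}_1)$, $\mathcal{S}(\mathcal{B}_2)$ and $\mathcal{S}(\mathcal{V})$ are semigroups. To see that a generic element $U=U_1\cup U_2\cup U_3$ of this family is not Lebesgue measurable, I would write $U_3=\bigcup_j V_j$ with $V_j\in\mathcal{V}(Q_j)$, fix one $V_k\in\mathcal{V}(Q_k)$, and use $\mathbb{R}=\bigcup\{V_k+q:q\in Q_k\}$ together with $V_k\subseteq U$ to get $\mathbb{R}=\bigcup\{U+q:q\in Q_k\}$; if $U$ were measurable, translation invariance of $\mu$ and $\mu(\mathbb{R})>0$ would force $\mu(U)>0$, contradicting the first part. (One may instead note that $\mu(U)=0$ is excluded since $V_k\subseteq U$ is not a null set by Proposition~\ref{Lebesgue219}(iii).) Invariance under $\Phi(\mathbb{R})$ then follows because $\mathcal{S}(\mathcal{B}_1)$ and $\mathcal{S}(\mathcal{B}_2)$ are invariant under translations and $\mathcal{S}(\mathcal{V})$ is invariant under the larger group $\Pi(\mathbb{R})\supseteq\Phi(\mathbb{R})$, so for $h\in\Phi(\mathbb{R})$ we have $h(U)=h(U_1)\cup h(U_2)\cup h(U_3)$ with each factor in the respective semigroup.

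The one point that requires care — and the only place where something could go wrong — is the bookkeeping in the enumeration $\{X_i\}_{i=1}^{N}$: I must keep track of whether each $X_i$ is a coset of $B_1$, a coset of $B_2$, or a Vitali selector, so that the correct vanishing result (Lemma~\ref{Lebesgue35} with $B_1$, Lemma~\ref{Lebesgue35} with $B_2$, or Lemma~\ref{Lebesgue226}) is invoked for $Y\cap X_i$. Once this dictionary is in place, there is no genuinely hard step; the proof is a routine assembly of the finitely additive extension argument already used for the Bernstein-only and Vitali-only cases.
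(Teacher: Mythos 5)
Your argument is correct and is essentially the paper's own: the paper's proof of Lemma \ref{Lebesgue57} simply says to repeat the finitely additive extension argument of Theorem \ref{Lebesgue51} with the extra block of $B_2$-cosets (exactly your enumeration $\{X_i\}_{i=1}^{N}$, invoking Lemma \ref{Lebesgue35} or Lemma \ref{Lebesgue226} case by case before applying Lemma \ref{Lebesgue225}), obtains the semigroup structure from Lemma \ref{Lebesgue26}, and derives invariance under $\Phi(\mathbb{R})$ from that of the three factors. Your handling of the non-measurability claim via the covering $\mathbb{R}=\bigcup\{V_k+q: q\in Q_k\}$ is precisely the argument of Corollary \ref{Lebesgue52}, which the paper leaves implicit at this point.
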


\begin{proof}
To prove that the element $U$ cannot contain any set of positive measure, we proceed as in Theorem \ref{Lebesgue51}. It is clear that the family $\mathcal{S}(\mathcal{B}_1)\vee \mathcal{S}(\mathcal{B}_2)\vee \mathcal{S}(\mathcal{V})$ is a semigroup of sets by Lemma \ref{Lebesgue26}. It is invariant under the action of the group $\Phi (\mathbb{R})$, since both families $\mathcal{S}(\mathcal{B}_1),\mathcal{S}(\mathcal{B}_2)$ and  $\mathcal{S}(\mathcal{V})$ are invariant under the action of the group $\Phi (\mathbb{R})$.
\end{proof}

The following statement can be proved in a similar way as Theorem \ref{Lebesgue56} by taking into account Lemma \ref{Lebesgue57}.

\begin{theorem}\label{Lebesgue58}
Let $B_1$ and $B_2$ be Bernstein subsets of $\mathbb{R}$ having the algebraic structure of being subgroups of $(\mathbb{R},+)$, and let $\mathcal{S}(\mathcal{V})$ be the semigroup generated by all Vitali selectors of $\mathbb{R}$. Then the families $\mathcal{N}_0*(\mathcal{S}(\mathcal{B}_1)\vee \mathcal{S}(\mathcal{B}_2)\vee \mathcal{S}(\mathcal{V}))$ and $(\mathcal{S}(\mathcal{B}_1)\vee \mathcal{S}(\mathcal{B}_2)\vee \mathcal{S}(\mathcal{V}))*\mathcal{N}_0$ are semigroups of sets on $\mathbb{R}$ such that 
$\mathcal{S}(\mathcal{B}_1)\vee \mathcal{S}(\mathcal{B}_2)\vee \mathcal{S}(\mathcal{V})\subseteq \mathcal{N}_0*(\mathcal{S}(\mathcal{B}_1)\vee \mathcal{S}(\mathcal{B}_2)\vee \mathcal{S}(\mathcal{V}))\subseteq (\mathcal{S}(\mathcal{B}_1)\vee \mathcal{S}(\mathcal{B}_2)\vee \mathcal{S}(\mathcal{V}))*\mathcal{N}_0$. They are  invariant under the action of the group $\Phi(\mathbb{R})$ and they consist of sets 
which are not measurable in the Lebesgue sense.
\end{theorem}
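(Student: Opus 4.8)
The plan is to follow the template of the proof of Theorem \ref{Lebesgue56}, with the role played there by Corollary \ref{Lebesgue52} now taken over by Lemma \ref{Lebesgue57}. First I would dispose of the structural claims. By Lemma \ref{Lebesgue57} the family $\mathcal{S}:=\mathcal{S}(\mathcal{B}_1)\vee \mathcal{S}(\mathcal{B}_2)\vee \mathcal{S}(\mathcal{V})$ is a semigroup of sets on $\mathbb{R}$, and since $\mathcal{N}_0$ is an ideal of sets on $\mathbb{R}$, Proposition \ref{Lebesgue25} immediately gives that $\mathcal{N}_0*\mathcal{S}$ and $\mathcal{S}*\mathcal{N}_0$ are semigroups of sets with $\mathcal{S}\subseteq \mathcal{N}_0*\mathcal{S}\subseteq \mathcal{S}*\mathcal{N}_0$, which is exactly the displayed chain. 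Invariance under $\Phi(\mathbb{R})$ is inherited: each of $\mathcal{S}(\mathcal{B}_1)$, $\mathcal{S}(\mathcal{B}_2)$, $\mathcal{S}(\mathcal{V})$ is translation invariant, hence so is $\mathcal{S}$ (this is already part of Lemma \ref{Lebesgue57}), and since $\mathcal{N}_0$ is translation invariant and any translation commutes with finite unions and set differences, the families $\mathcal{N}_0*\mathcal{S}$ and $\mathcal{S}*\mathcal{N}_0$ are translation invariant as well.

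The substantive part is non-Lebesgue measurability, and because the three families form the displayed increasing chain it is enough to treat the largest one, $\mathcal{S}*\mathcal{N}_0$. Let $A\in \mathcal{S}*\mathcal{N}_0$ and suppose, toward a contradiction, that $A\in \mathcal{L}(\mathbb{R})$. Unwinding the definitions, and using Lemma \ref{Lebesgue28} together with the associativity of $\vee$ to express an element of a triple join as a single triple union, we may write $A=\big((U_1\cup U_2\cup U_3)\setminus M\big)\cup N$ with $U_1\in \mathcal{S}(\mathcal{B}_1)$, $U_2\in \mathcal{S}(\mathcal{B}_2)$, $U_3\in \mathcal{S}(\mathcal{V})$ and $M,N\in \mathcal{N}_0$. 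Put $W=U_1\cup U_2\cup U_3$. Then $A\setminus W\subseteq N$ and $W\setminus A\subseteq M$, hence $A\,\Delta\,W\subseteq M\cup N$ and $\mu^{*}(A\,\Delta\,W)=0$; by Lemma \ref{Lebesgue216} this forces $W\in \mathcal{L}(\mathbb{R})$.

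It then remains to contradict "$W$ measurable''. Since $U_3\in \mathcal{S}(\mathcal{V})$ is a (nonempty) finite union of Vitali selectors, $W$ contains a Vitali selector $V_k$ related to some $Q_k\in \mathcal{C}$. If $\mu(W)=0$ then $V_k$ would be a null set, contradicting Proposition \ref{Lebesgue219}(iii); and if $\mu(W)>0$ then $W$ is itself a Lebesgue measurable subset of $W$ of positive measure, contradicting Lemma \ref{Lebesgue57}. (Alternatively one may note $\mathbb{R}=\bigcup\{V_k+q:q\in Q_k\}\subseteq \bigcup\{W+q:q\in Q_k\}$, use translation invariance of $\mu$ to deduce $\mu(W)>0$, and then invoke Lemma \ref{Lebesgue57}.) Either way we reach a contradiction, so $A\notin \mathcal{L}(\mathbb{R})$, and the same conclusion holds a fortiori for every element of $\mathcal{S}$ and of $\mathcal{N}_0*\mathcal{S}$.

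I do not anticipate a genuine obstacle: everything reduces to Proposition \ref{Lebesgue25}, Lemma \ref{Lebesgue216}, Lemma \ref{Lebesgue57} and Proposition \ref{Lebesgue219}. The only points demanding a little care are (i) checking that the join of three semigroups followed by $*\mathcal{N}_0$ really yields exactly the triple-union-minus-null-plus-null normal form used above — this is where Lemma \ref{Lebesgue28} and the remark following Proposition \ref{Lebesgue211} enter — and (ii) recording that every element of $\mathcal{S}(\mathcal{V})$ is nonempty, so that a Vitali selector is genuinely available inside $W$.
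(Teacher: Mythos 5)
Your proof is correct and follows exactly the route the paper intends: the paper gives no separate argument for Theorem \ref{Lebesgue58} but states that it ``can be proved in a similar way as Theorem \ref{Lebesgue56} by taking into account Lemma \ref{Lebesgue57}'', which is precisely your template (Proposition \ref{Lebesgue25} for the semigroup structure and inclusions, inherited translation invariance, and the symmetric-difference argument via Lemma \ref{Lebesgue216} reducing everything to the non-measurability of the triple union, settled by Lemma \ref{Lebesgue57} together with the Vitali-selector covering argument). The only cosmetic remark is that your appeal to Lemma \ref{Lebesgue28} for the normal form is unnecessary, since the definition of $\vee$ already gives elements of the triple join as triple unions.
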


It follows from Proposition \ref{Lebesgue29} and Lemma \ref{Lebesgue28} that $\mathcal{S}(\mathcal{B}_1\vee \mathcal{B}_2\vee \mathcal{V})*\mathcal{N}_0=(\mathcal{S}(\mathcal{B}_1)\vee \mathcal{S}(\mathcal{B}_2)\vee \mathcal{S}(\mathcal{V}))*\mathcal{N}_0=(\mathcal{S}(\mathcal{B}_1) * \mathcal{N}_0)\vee (\mathcal{S}(\mathcal{B}_2)*\mathcal{N}_0)\vee (\mathcal{S}(\mathcal{V})*\mathcal{N}_0)$ and $\mathcal{N}_0*(\mathcal{S}(\mathcal{B}_1)\vee \mathcal{S}(\mathcal{B}_2)\vee \mathcal{S}(\mathcal{V}))=(\mathcal{N}_0*\mathcal{S}(\mathcal{B}_1))\vee (\mathcal{N}_0*\mathcal{S}(\mathcal{B}_2))\vee (\mathcal{N}_0*\mathcal{S}(\mathcal{V})$.  All these semigroups consist of sets which are not measurable in the Lebesgue sense. We have used the $\sigma$-ideal $\mathcal{N}_0$ in the construction of different semigroups for which elements are not Lebesgue measurable. All the statements remain valid by using an ideal of sets $\mathcal{I}$ such that $\mathcal{I}\subseteq \mathcal{N}_0$. Different statements involving the collection $\mathcal{S}(\mathcal{V})$ remain valid when $\mathcal{S}(\mathcal{V})$ is replaced by $\mathcal{V}_1(Q)$ for any countable dense subgroup $Q$ of $(\mathbb{R},+)$.

\subsection*{Acknowledgements}
We would like to thank Prof. A. B. Kharazishvili for the helpful information about Vitali selectors that he provided. We would like to express our gratitude for the financial support we got from the University of Rwanda through the UR-Sweden Program. Finally, we appreciate the referee’s insightful remarks and comments, which enabled us to present our results more effectively.

\end{document}